\DeclareFontFamily{OT1}{rsfs}{}
\DeclareFontShape{OT1}{rsfs}{n}{it}{<-> rsfs10}{}
\DeclareMathAlphabet{\curly}{OT1}{rsfs}{n}{it}
\newcommand{\eqnum}{\refstepcounter{equation}\textup{\tagform@{\theequation}}}
\renewcommand\;{\hspace{.6pt}}
\renewcommand\P[1]{{\mathbb P}^{\;#1}}
\newcommand\PP{\mathbb P}
\newcommand\LL{\mathbb L}
\newcommand\TT{\mathbb T}
\newcommand\C{\mathbb C}
\newcommand\Q{\mathbb Q}
\newcommand\Z{\mathbb Z}
\renewcommand\t{\mathfrak t}
\newcommand\g{\mathfrak g}
\newcommand\cA{\mathcal A}
\newcommand\cE{\mathcal E}
\newcommand\cF{\mathcal F}
\newcommand\calH{\mathcal H}
\renewcommand\cL{\mathcal L}
\newcommand\cM{\mathcal M}
\newcommand\cN{\mathcal N}
\newcommand\cO{\mathcal O}
\newcommand\cS{\mathcal S}
\newcommand\cZ{\mathcal Z}
\newcommand\udot{^{\bullet}}
\newcommand\E{\mathsf E}
\newcommand\EE{\curly E}
\newcommand\sH{\mathsf H}
\newcommand{\so}{\ \ext@arrow 0359\Rightarrowfill@{}{\hspace{3mm}}\ }
\newcommand{\rt}[1]{\xrightarrow{\ #1\ }}
\newcommand\To{\longrightarrow}
\newcommand\into{\hookrightarrow}
\newcommand\INTO{\ \ar@{^(->}[r]<-.2ex>}
\newcommand{\Into}{\ensuremath{\lhook\joinrel\relbar\joinrel\rightarrow}}
\newcommand\Mapsto{\ensuremath{\shortmid\joinrel\relbar\joinrel\rightarrow}}
\renewcommand\_{^{}_}
\newcommand{\mat}[4]{\left(\begin{array}{cc} \!\!#1 & #2\!\! \\ \!\!#3 &
#4\!\!\end{array}\right)}
\newfont{\bigtimesfont}{cmsy10 scaled \magstep5}
\newcommand{\bigtimes}{\mathop{\lower0.9ex\hbox{\bigtimesfont\symbol2}}}
\newcommand\dbar{\overline\partial}
\newcommand\VW{\mathsf{VW}}
\newcommand\vw{\mathsf{vw}}
\newcommand\UVW{\widetilde{\mathsf{VW}}}
\newcommand\AJ{\operatorname{AJ}}
\newcommand\At{\operatorname{At}}
\newcommand\rk{\operatorname{rank}}
\newcommand\vir{\operatorname{vir}}
\newcommand\vd{\operatorname{vd}}
\newcommand\tr{\operatorname{tr}}
\newcommand\im{\operatorname{im}}
\newcommand\id{\operatorname{id}}
\newcommand\ev{\operatorname{ev}}
\newcommand\Hom{\operatorname{Hom}}
\newcommand\Sym{\operatorname{Sym}}     
\renewcommand\hom{\curly H\!om}
\newcommand\Ext{\operatorname{Ext}}
\newcommand\ext{\curly Ext}
\newcommand\Aut{\operatorname{Aut}}
\newcommand\Pic{\operatorname{Pic}}
\newcommand\Jac{\operatorname{Jac}}
\newcommand\Spec{\operatorname{Spec}\,}
\newcommand\Hilb{\operatorname{Hilb}}
\newcommand\Cone{\operatorname{Cone}}
\newcommand\Bl{\operatorname{Bl}}
\newcommand\red{\operatorname{red}}
\newcommand\arXiv[1]{\href{http://arxiv.org/abs/#1}{arXiv:#1}}
\newcommand\mathAG[1]{\href{http://arxiv.org/abs/math/#1}{math.AG/#1}}
\newcommand\hepth[1]{\href{http://arxiv.org/abs/hep-th/#1}{hep-th/#1}}
\DeclareRobustCommand{\SkipTocEntry}[3]{}
\newcommand\@dotsep{4.5}
\def\@tocline#1#2#3#4#5#6#7{\relax
  \ifnum #1>\c@tocdepth 
  \else
    \par \addpenalty\@secpenalty\addvspace{#2}%
    \begingroup \hyphenpenalty\@M
    \@ifempty{#4}{%
      \@tempdima\csname r@tocindent\number#1\endcsname\relax
    }{%
      \@tempdima#4\relax
    }%
    \parindent\z@ \leftskip#3\relax \advance\leftskip\@tempdima\relax
    \rightskip\@pnumwidth plus1em \parfillskip-\@pnumwidth
    #5\leavevmode #6\relax
    \leaders\hbox{$\m@th
      \mkern \@dotsep mu\hbox{.}\mkern \@dotsep mu$}\hfill
    \hbox to\@pnumwidth{\@tocpagenum{#7}}\par
    \nobreak
    \endgroup
  \fi}
\newcommand\beq[1]{\begin{equation}\label{#1}}
\newcommand\eeq{\end{equation}}
\newcommand\beqa{\begin{eqnarray*}}
\newcommand\eeqa{\end{eqnarray*}}
\makeatletter \@addtoreset{equation}{section} \makeatother
\renewcommand{\theequation}{\thesection.\arabic{equation}}
\newtheorem{defn}[equation]{Definition}
\newtheorem{pdefn}[equation]{Preliminary definition}
\newtheorem{thm}[equation]{Theorem}
\newtheorem{lem}[equation]{Lemma}
\newtheorem{cor}[equation]{Corollary}
\newtheorem{prop}[equation]{Proposition}
\newtheorem{rmk}[equation]{Remark}
\title{Vafa-Witten invariants for projective surfaces I: stable case}
\author[Y. Tanaka and R. P. Thomas]{Yuuji Tanaka and Richard P. Thomas}
\begin{document}
\maketitle

\begin{abstract} \noindent
On a polarised surface, solutions of the Vafa-Witten equations correspond to certain polystable Higgs pairs. When stability and semistability coincide, the moduli space admits a symmetric obstruction theory and a $\C^*$ action with compact fixed locus. Applying \emph{virtual localisation} we define invariants constant under deformations.

When the vanishing theorem of Vafa-Witten holds, the result is the (signed) Euler characteristic of the moduli space of instantons. In general there are other, \emph{rational}, contributions. Calculations of these on surfaces with positive canonical bundle recover the first terms of modular forms predicted by Vafa and Witten.
\end{abstract}
\renewcommand\contentsname{\vspace{-9mm}}
\tableofcontents \vspace{-1cm}


\section{Introduction}

Fix a Riemannian 4-manifold $S$; from the next Section onwards it will be a complex projective surface (with a K\"ahler metric). Fix a compact Lie group $G$ and principal $G$-bundle $P\to S$. We let $\cA_P$ denote the set of all $G$-connections on $P$, and $\Omega^i(\g_P)$ the $\C^\infty$ $i$-forms on $S$ with values in the adjoint bundle of $P$. Finally $\Omega^+\subset\Omega^2$ denotes the self-dual 2-forms with respect to the Riemannian metric.

\subsection{The Vafa-Witten equations}
The Vafa-Witten equations \cite{VW} are
\begin{align}\nonumber
\cA_P\times\Omega^+(\g_P)\times\Omega^0(\g_P)\,\To
&\ \Omega^+(\g_P)\times\Omega^1(\g_P) \\ \label{VW}
(d_A,B,\Gamma)\,\Mapsto&\ \big(F_A^++[B.B]+[B,\Gamma],\,d_A\Gamma+d_A^*B\big).
\end{align}
Here $[B.B]$ is defined by Lie bracket on $\g_P$ and the         contraction $(T^*)^{\otimes2}\otimes(T^*)^{\otimes2}\to (T^*)^{\otimes2}$ given by the (inverse of the) metric on the second and third $T^*$ factors, followed by antisymmetrisation.

\subsection{The equations on a K\"ahler surface}
When $(S,\omega)$ is a K\"ahler surface, we can rewrite $B$ and $\Gamma$ in terms of a $\g_P$-valued $(2,0)$-form $\phi\in\Omega^{2,0}(\g_P\otimes\C)$ and a $\g_P$-valued multiple of $\omega$, resulting in
\begin{eqnarray}
F_A^{0,2} &=& 0, \nonumber \\
F_A^{1,1}\wedge\omega+[\phi,\overline\phi\,] &=&
c\cdot\id_E\omega^2, \label{VWk} \\
\dbar_A\phi &=& 0. \nonumber
\end{eqnarray}
Here $c$ is a topological constant. From now on we will restrict attention to the cases $G=U(r)$ or $G=SU(r)$, so that $P$ is the frame bundle of a hermitian vector bundle $E$. Then $c=\pi i\int_Sc_1(E)\wedge\omega\big/r\!\int_S\omega^2$.
 
Thus, by the first equation, $\dbar_A$ defines an integrable holomorphic structure on $E$ with respect to which $\phi$ is holomorphic. Since the second equation is a moment map for the gauge group action, one can expect an infinite dimensional Kempf-Ness theorem that solutions (modulo $U(r)$ or $SU(r)$ gauge transformations) are equivalent to \emph{stable} holomorphic pairs $(E,\phi)$ (modulo $GL(r,\C)$ or $SL(r,\C)$ gauge transformations) for an appropriate notion of stability. 

\subsection{Hitchin-Kobayashi correspondence}
Such a Donaldson-Uhlenbeck-Yau-type result has been proved for K\"ahler surfaces $(S,\omega)$ in \cite{AG}, and for polarised surfaces $(S,\cO_S(1))$ with integral K\"ahler form $h=c_1(\cO_S(1))$ \cite{Ta1}. We focus on the latter case. Solutions of the $U(r)$ Vafa-Witten equations are equivalent to \emph{slope polystable Higgs pairs}
\beq{Tstar}
(E,\phi), \qquad\phi\in\Hom(E,E\otimes K_S),
\eeq
while for $SU(r)$ we fix $\det E=\cO_S$ and take $\phi$ to be trace-free. The stability condition is that
\beq{stabb}
\frac{c_1(F)\cdot h}{\rk(F)}\ <\ \frac{c_1(E)\cdot h}{\rk(E)}
\eeq
for all proper $\phi$-invariant coherent subsheaves $F\subset E$. Replacing $<$ by $\le$ defines semistability, while polystable pairs are direct sums of stable Higgs pairs of the same slope. The closely related but slightly more refined notion of Gieseker stability is described in Section \ref{Gss}.

Thus the moduli space of solutions of the Vafa-Witten equations has an obvious partial compactification given by taking  Gieseker semistable (hence torsion-free) rank $r>0$ Higgs \emph{sheaves}. In this paper ``stability" always refers to Gieseker stability, and we always assume the Chern classes of $E$ are chosen so that \emph{semistability and stability coincide}. In paper II \cite{TT2} we tackle the semistable case.

\subsection{Spectral construction}
Finally, we can turn such pairs $(E,\phi)$ into compactly supported \emph{stable torsion sheaves $\cE_\phi$ on the Calabi-Yau 3-fold $X$}, where $X$ is the total space of the canonical line bundle $K_S$ of $S$.

Roughly speaking, over each point $x\in S$, we replace $(E_x,\phi_x)$ by the eigenspaces of $\phi_x\in\Hom\big(E_x, E_x\otimes(K_S)_x\big)$ supported on their respective eigenvalues in $(K_S)_x$. The result is an equivalence of categories
$$
{\rm Higgs}_{K_S}(S)\cong\,{\rm Coh}_c(X);
$$
see Section \ref{ss} for full details.

\subsection{Localisation and a $U(r)$ Vafa-Witten invariant}
After fixing Chern classes $(r,c_1,c_2)$ on $S$ for which semistability\,=\,stability, we let $\cN$ denote the moduli space of stable Higgs sheaves $(E,\phi)$ on $S$, or, equivalently, compactly supported stable torsion sheaves $\cE_\phi$ on $X$.

As a moduli space of sheaves on Calabi-Yau 3-fold, the results of \cite{HT, Th} give a perfect obstruction theory on $\cN$. Since $\cN$ is noncompact, the resulting virtual cycle is uninteresting. However, it also carries a $\C^*$ action, given by scaling the Higgs field $\phi$, or, equivalently, the fibres of $X=K_S\to S$. The $\C^*$-fixed locus is compact, so we may apply virtual localisation \cite{GP} to define a numerical invariant \emph{counting} the sheaves $\cE$. This is our \emph{preliminary} $U(r)$ Vafa-Witten invariant
\beq{uvw}
\UVW_{r,c_1,c_2}(S)\ =\ \int_{\big[\cN_{r,c_1,c_2}\big]^{\vir}}\frac1{e(N^{\vir})}\ \in\ \Q,
\eeq
described in Section \ref{prelim}. It is nothing but a local surface DT invariant of the Calabi-Yau 3-fold $X=K_S$.

\subsection{\for{toc}{Virtual technicalities and an $SU(r)$ VW invariant}\except{toc}{Virtual technicalities and an $SU(r)$ Vafa-Witten invariant}}\ \\
\emph{However}, as discussed in Remark \ref{nogood}, $\UVW$ \eqref{uvw} is \emph{zero} unless both $H^{0,1}(S)=0=H^{0,2}(S)$. Most surfaces satisfying this condition satisfy a vanishing theorem, making the Vafa-Witten invariant just the (signed) topological Euler characteristic of the moduli space of stable sheaves on $S$. We would like to be able to handle more interesting cases.

The right fix is to consider the Vafa-Witten equations for the gauge group $SU(r)$ instead of $U(r)$.
Thus we restrict to Higgs pairs $(E,\phi)$ where $E$ has \emph{fixed determinant}\footnote{$SU(r)$ is a bit of a misnomer since for this definition we allow ourselves to fix $\det E=L$ for some \emph{nontrivial} line bundle $L$ on $S$ (e.g. one whose degree is coprime to $\rk(E)$ so that semistability implies stability.) In paper II \cite{TT2} we allow fixed \emph{trivial} $\det E=\cO_S$.} and $\phi$ is \emph{trace-free}. Equivalently, we work with torsion sheaves $\cE$ on $X$ whose ``\emph{centre of mass}" on each fibre (i.e. the sum of its points of support in $K_S$, weighted by multiplicity) is \emph{zero}, and whose pushdown to $S$ has fixed determinant.

Producing the right deformation theory for such things turned out to be unexpectedly complicated. At a single point $(E,\phi)\in\cN$, the complication is roughly the following.

A natural resolution of the torsion sheaf $\cE_\phi$ on $X\rt\pi S$ in terms of $\pi^*E$ and $\pi^*\phi$ \eqref{resolution} gives a long exact sequence
\beq{LEs}
\cdots\To\Hom(E,E\otimes K_S)\To\Ext^1(\cE_\phi,\cE_\phi)\To\Ext^1(E,E)\To\cdots
\eeq
relating the automorphisms, deformations and obstructions of $\cE_\phi$ to those of $(E,\phi)$. The third arrow is easily seen to take deformations of $\cE_\phi$ to the corresponding deformations of its pushdown $E=\pi_*\;\cE_\phi$. What was surprisingly hard (for us) to show is that the second arrow is the one would expect --- namely the derivative of the map from (an open set in) $\Hom(E,E\otimes K_S)$ to $\cN$ that takes $\phi$ to $(E,\phi)$.

Dealing with this issue (in full generality, over an arbitrary family to all orders at the level of perfect obstruction theories) is what takes up all of Section \ref{more}.

The result is that we can express our perfect obstruction theory for $\cN$ in terms of Higgs data on $S$ instead of sheaves on $X$. That done, it allows us to easily fix $\det E\cong L$ and $\tr\phi=0$ to deduce a \emph{symmetric perfect obstruction theory} for the moduli space\footnote{In Theorem \ref{ffinal} we extend this result from $K_S$-Higgs pairs to $\cL$-Higgs pairs. Equivalently we work with torsion sheaves on the total space of $\cL\to S$ in place of $K_S\to S$. The resulting 2-term perfect obstruction theory is not symmetric when $\cL\ne K_S$, however.}
$$
\cN_L^\perp\ =\ \big\{(E,\phi)\colon \det E\cong L,\ \tr\phi=0\big\}.
$$
In fact, using the trace and identity maps on $S$ on the first and last terms of \eqref{LEs} produces a splitting
\beq{parp}
\Ext_X^*(\cE_\phi,\cE_\phi)\ \cong\ H^{*-1}(K_S)\oplus H^*(\cO_S)\,\oplus\,\Ext^*_X(\cE_\phi,\cE_\phi)\_\perp\,.
\eeq
This expresses the deformation-obstruction theory of $\cE_\phi\in\cN$ on the left hand side in terms of, respectively: the deformations and obstructions of $\tr\phi$, the  deformations and obstructions of $\det E$, and the  deformations and obstructions of $\cE_\phi\in\cN^\perp_L$\;; see Theorem \ref{final}.

Thus we get a better definition of the Vafa-Witten invariant by localising this new virtual cycle,
\beq{definitive}
\VW_{r,c_1,c_2}\ :=\ \int_{\big[(\cN^\perp_{r,L,c_2})^{\C^*}\big]^{\vir\ }}
\frac1{e(N^{\vir})}\ \in\ \Q.
\eeq
If the Higgs field vanishes $\phi=0$ for all points $(E,\phi)\in(\cN^\perp_L)^{\C^*}$ then \eqref{definitive} is actually an integer (the virtual signed Euler characteristic of Section \ref{fixsec}), but in general we work in the localised equivariant cohomology $H^*_{\C^*}(B\C^*,\Q)[t^{-1}]\cong\Q[t,t^{-1}]$ and get rational numbers. (The invariant is a constant, rather than a more general Laurent polynomial in the equivariant parameter $t$, because the virtual dimension of the problem is zero.)

$\VW_{r,c_1,c_2}$ \eqref{definitive} is invariant under deformations of $L$, and deformations of $S$ which keep the class $c_1:=c_1(L)\in H^2(S)$ of Hodge type (1,1). Physics seems to predict that it \emph{ought} to also be invariant under deformations of the polarisation $\cO_S(1)$ (or more generally the stability condition defining $\cN$) --- i.e. the invariants' wall crossing should be trivial. We intend to return to this point later using an extension of the wall crossing formula \cite{KL2}. (Whereas the wall crossing formulae of Joyce-Song \cite{JS} and Kontsevich-Soibelman \cite{KS} use weighted Euler characteristics in a crucial way, Kiem-Li use virtual localisation.) When $L=\cO_S$ the closely related invariant $\vw$ defined by Behrend localisation in paper II \cite{TT2} \emph{does} have trivial wall crossing.
\medskip

The integral \eqref{definitive} is over the different components of the fixed locus of the $\C^*$ action scaling the Higgs field. They come in two flavours, which we name according to the conventions of \cite{DPS, GK}.
\begin{enumerate}
\item[\eqnum\label{cpt1}] The ``\emph{instanton branch}" $\phi=0$.
Here we recover $\cM_{\mathrm{asd}}$ or the moduli space $\cM_L$ of stable sheaves of fixed determinant $L$ on $S$. By Proposition \ref{vaneul} the contribution of this locus to our invariant is the virtual signed Euler characteristic of $\cM_{\mathrm{asd}}$ studied in \cite{JT, GK}, generalising \eqref{Euler}. It is an integer.
\item[\eqnum\label{cpt2}] The ``\emph{monopole branch}" $\phi\ne0$. We call the union of these components of the fixed locus $\cM_2$. They correspond to $\C^*$-fixed sheaves $\cE$ supported on scheme theoretic thickenings of the zero section $S\subset X$. When they have rank 1 on their support, they can be described in terms of nested Hilbert schemes; see \cite{GSY1, GSY2} and Section \ref{quintic}. In general they correspond to flags of sheaves on $S$ as studied in the work of Negut (see \cite{Ne}, for instance). They contribute new rational numbers to the Vafa-Witten invariants.
\end{enumerate}

\subsection{Digression: derived algebraic geometry}
The perfect obstruction theory for Higgs sheaves is of independent interest, but adds a great deal of complexity and length to the paper. It also requires us to work with Illusie's full cotangent complex\footnote{Because the exact triangle relating the cotangent complexes of $\cN,\ \curly M$ and $\cN/\curly M$ need not be exact after truncating.} \cite{Ill} instead of the truncated cotangent complex that normally suffices \cite{HT}.

Most readers should therefore ignore most of Sections \ref{potsec}, \ref{prelim}, \ref{more} and skip straight to Section \ref{SU}, taking on trust that the standard perfect obstruction theory \cite{HT, Th} can be refined to the fixed trace and determinant case. But readers who are both derived and stacky will notice that a much quicker solution is to use the theory of derived algebraic stacks and the results of \cite{STV,TVa}.

That is, one shows that the stack $\curly M_L$ of rank $r$ torsion-free coherent sheaves $E$ on $S$ with fixed-determinant $\det E\cong L$ has a natural derived structure, inducing one on its $(-1)$-shifted cotangent bundle\footnote{At the level of points, this fibres over $\curly M_L$ with fibre over $E$ the \emph{dual} of the obstruction space $\Ext^2(E,E)\_0$ at that point. By the duality $\Ext^2(E,E)_0^*\cong\Hom(E,E\otimes K_S)\_0$ we indeed recover \eqref{Tstar}.} $T^*[-1]\curly M_L$. This is the moduli stack of all Higgs pairs. The open substack of \emph{stable} Higgs pairs is then the product of a derived scheme $\cN_L^\perp$ and $T^*[-1]B\C^*$:
$$
\cN_L^\perp\times T^*[-1]B\C^*\ \subset\ T^*[-1]\curly M_L.
$$
Rigidifying (removing the $T^*[-1]B\C^*$ factor) gives a natural derived structure on $\cN_L^\perp$ which is \emph{quasi-smooth}. It therefore gives rise to a perfect obstruction theory on the underlying scheme. (This obstruction theory is also \emph{symmetric} \cite{Ca}.) We are the wrong authors to use this technology honestly or competently, however, so we employ more classical (but lengthy!) techniques familiar to virtual cyclists like the second author.

\subsection{Cotangent field theories}
There is a symmetry between the domain and target of the VW equations \eqref{VW}. If we add a local Coulomb gauge fixing equation $d_{A_0}^*(A-A_0)\in\Omega^0(\g_P)$ instead of dividing by gauge, and notice that $\cA_P$ is an affine space modelled on $\Omega^1(\g_P)$, we see both sides contain the same pieces.

Roughly speaking, the equations come from the anti-self-dual (asd) equations $F_A^+=0$ by passing to their ``$(-1)$-shifted cotangent bundle", at least to first order in the new (cotangent) variables $B,\Gamma$.

The local model is the following. Start with a section $s$ of a vector bundle $E$ over an ambient space $A$ cutting out a moduli space of solutions $\cM$:
$$\xymatrix@R=16pt@C=0pt{
& E \ar[d] \\ s^{-1}(0)\ =\ \cM\ \subset & A. \ar@/_/[u]_s}
$$
The setting could be real or holomorphic, finite or infinite dimensional. In the latter case it is called a \emph{balanced, co-} or \emph{cotangent} field theory in different references; see for example \cite{CMR, DiM, Co, JT} and \cite[Section 2]{VW}. For the real VW equations\footnote{In fact their linearisation about $B=0=\Gamma$.} we start with the asd equations, so the section is $F_A^+$ in the infinite dimensional bundle $\Omega^+(\g_P)$ over the infinite dimensional ambient space $\cA_P$, plus the Coulomb gauge fixing equation (or we replace $\cA_P$ by its quotient by gauge). In Section \ref{morelin} we will work with a \emph{finite dimensional holomorphic} version of this model, but in that case (we will see below) we will have to take $A$ to be a stack rather than a space. \medskip

Next pass to a bigger ambient space: the total space of $E^*\to A$ (in the VW setting this adds the variables $B$ and $\Gamma$). By pairing the section $s$ of $E$ with points of $E^*$ it defines a function $\tilde s$ on this ambient space $E^*$. Its gradient $d\tilde s$ is a section of the (co)tangent bundle of $E^*$.
$$\xymatrix@R=16pt@C=0pt{
& T^*_{E^*} \ar[d] \\ (d\tilde s)^{-1}(0)\ \subset & E^*. \ar@/_/[u]_{d\tilde s}}
$$
The zeros of $d\tilde s$ include the original zeros $s=0$ (inside the zero section $A$ of $E^*$ --- i.e. it includes the asd moduli space where $F_A^+=0=B=\Gamma$), but in general there are more, giving the (dual) obstruction bundle on the asd moduli space (those $(B,\Gamma)$ perpendicular to the image of the linearised asd equation). 

In ``good" cases\footnote{There are different situations in which one can prove such a vanishing theorem, but they all involve $S$ having curvature which is positive in some sense.} \cite[pp 23--25]{VW} the only solutions have $F_A^+=0=B=\Gamma$. In this case the original asd moduli space $\cM_{\mathrm{asd}}$ is smooth with zero obstruction bundle. Thought of as the moduli space of solutions to the Vafa-Witten equations modulo gauge, however, it has obstruction bundle 
the (co)tangent bundle of $\cM_{\mathrm{asd}}$, and so a natural associated integer invariant given by its Euler class
\beq{Euler}
\pm e(\cM_{\mathrm{asd}}).
\eeq
When no vanishing result holds, it seems no one has yet managed to make a rigorous definition of the VW invariant on a general 4-manifold.

\subsection{The complex case is more linear}\label{morelin}
Vafa and Witten's equation is a nonlinear version of the above construction including also the quadratic term $[B.B]+[B,\Gamma]$. But in the projective surface case, the Hitchin-Kobayashi correspondence \emph{removes this nonlinearity at the expense of enlarging $\cM_{\mathrm{asd}}$.}

That is, if we let $\curly M$ denote the moduli \emph{stack} of \emph{all} bundles (or torsion-free sheaves) on $S$ --- not just those which are stable, which form $\cM_{\mathrm{asd}}$ --- then the stable Higgs sheaves form a Zariski open in the $(-1)$-shifted cotangent bundle
$$
\cN\ \subset\ T^*[-1]\curly M.
$$
As a space (ignoring its derived structure or obstruction theory) $T^*[-1]\curly M=\Spec\Sym\udot\mathrm{Ob}$, where Ob is the obstruction sheaf of $\curly M$, so it can be thought of as the total space of the dual obstruction bundle over $\curly M$.  Since the fibre of Ob over $E\in\curly M$ is $\Ext^2(E,E)$, Serre dual to $\Hom(E,E\otimes K_S)$, we see $T^*[-1]\curly M\to\curly M$ is a fibration by Higgs fields $\phi\in\Hom(E,E\otimes K_S)$.

Passing to the open locus which satisfies the stability condition \eqref{stabb} the stabiliser groups drop to $\C^*$. Rigidifying (removing them, and their $(-1)$-shifted cotangents in degree $-2$) gives the scheme $\cN$ with a quasi-smooth derived structure. This induces the symmetric obstruction theory we used to define our invariant. Similarly our $SU(r)$ moduli space is an open set
$$
\cN^\perp_L\ \subset\ T^*[-1]\curly M_L,
$$
inducing the correct symmetric obstruction theory, again after rigidifying.

This enlargement of $\cM_{\mathrm{asd}}$ to $\curly M_L$ is what results in there now being \emph{two} types of fixed locus for the additional $\C^*$ action that scales the Higgs field $\phi$. The first \eqref{cpt1} is $\cM_{\mathrm{asd}}$ but the second --- the monopole branch $\cM_2$ \eqref{cpt2} with $\phi\ne0$ --- need not lie in $T^*[-1]\cM_{\mathrm{asd}}$.

\subsection{Other localisations} There are three other natural ways to localise the virtual cycle of $\cN$ to $\cN^{\C^*}$, thus giving competing definitions of the VW invariant.

Let $\cN$ be a scheme with a symmetric perfect obstruction theory. Behrend \cite{Be} defines a constructible function
$$
\chi^B\colon\cN\To\Z
$$
such that if $\cN$ is compact then the degree of its (zero dimensional) virtual cycle equals the Euler characteristic of $\cN$ weighted by $\chi^B$,
$$
\int_{[\cN]^{\vir}}1\ =\ e\big(\cN,\chi^B\big)\ :=\ 
\sum_{i\in\Z}i\cdot e\big((\chi^B)^{-1}(i)\big).
$$
Since our moduli spaces of Higgs pairs $\cN$ and $\cN_L^\perp$ are noncompact, we instead take the right hand side as a definition. Of course $\chi^B$ is $\C^*$-invariant and the Euler characteristic of any non-fixed orbit is 0, so only the fixed points contribute. This gives the localisation
\beq{kaione}
e(\cN,\chi^B)\ =\ e\Big(\cN^{\C^*},\chi^B\big|_{\cN^{\C^*}}\Big).
\eeq
When $h^{0,1}(S)>0$ this vanishes (since $e(\Jac(S))=0$) so we fix determinant and trace and localise on $\cN_L^\perp$ instead. Thus we consider the Kai-localised $SU(r)$ Vafa-Witten invariant given by\footnote{At least when there are no strictly semistables. More generally we use the Joyce-Song formalism and consider invariants in $\Q$ counting semistables \cite{TT2}.}
\beq{kaidef}
\vw_{r,c_1,c_2}\ :=\ e\Big((\cN_L^\perp)^{\C^*},\chi^B\big|_{(\cN_L^\perp)^{\C^*}}\Big)\ \in\ \Z.
\eeq
When $h^{0,1}(S)=0$ we recover \eqref{kaione}.

Similarly one can also use Kiem-Li's cosection localisation \cite{KL1}. Differentiating the $\C^*$ action on $\cN$ and $\cN_L^\perp$ defines vector fields thereon. Since these have symmetric obstruction theories, their obstruction sheaves are their cotangent sheaves. Pairing with the vector field gives \emph{cosections}
$$
\mathrm{Ob}_{\cN}\To\cO_{\cN} \qquad\mathrm{and}\qquad
\mathrm{Ob}_{\cN^\perp_L}\To\cO_{\cN^\perp_L}.
$$
Their zero loci are the $\C^*$-fixed loci, so Kiem-Li's construction gives localised cycles
$$
\big[\cN\big]^{\mathrm{loc}}\,\in\,A_0\big(\cN^{\C^*}\big)
\qquad\mathrm{and}\qquad
\big[\cN^\perp_L\big]^{\mathrm{loc}}\,\in\,A_0\big((\cN^\perp_L)^{\C^*}\big).
$$
When $h^{0,1}(S)=0$ they coincide, otherwise the first vanishes. This suggests a third VW invariant
$$
\vw'_{r,c_1,c_2}\ :=\ \int_{\big[\cN^\perp_L\big]^{\mathrm{loc}}}1.
$$
Thirdly, we could take the signed topological Euler characteristic of $(\cN^\perp_L)^{\C^*}$. In fact these three rival definitions are closely related. By \cite{Ji, JT} the first two are equal,
$$
\vw\ =\ \vw',
$$
but are different in general from the invariant $\VW$ defined by  virtual localisation. Furthermore, when a ``vanishing theorem" holds so that all $\C^*$-fixed stable Higgs pairs have $\phi=0$ (equivalently $\C^*$-fixed stable torsion sheaves on $X$ are all pushed forward from $S$) they also equal the third definition by ``dimension reduction" \cite{BBS, Da}, \cite[Section 5]{JT}:
$$
\vw\ =\ \vw'\ =\ (-1)^{\vd}e\Big((\cN^\perp_L)^{\C^*}\Big).
$$
While $\vw$ is not in general deformation invariant, it does have advantages over $\VW$: it is often more easily computable, for instance by cut-and-paste methods, and it admits a natural refinement and categorification (as expected in physics). While we expect that $\vw=\VW$ for surfaces with $K_S\le0$ --- we prove this for $\deg K_S<0$ in \cite{TT2} and it is proved for $S$ a K3 surface in \cite{MT} --- in general they differ. In particular when stability\,=\,semistability $\vw$ is an integer but $\VW$ is only rational in general.

So we need to chose between the two invariants somehow. We do this by seeing how they stand up to the modularity predictions of physics.

\subsection{Calculations and modularity} \label{discuss} Vafa-Witten predicted that for fixed rank $r$ and determinant $L$, S-duality should force the generating function
$$
Z_r(S,L)\ :=\ q^{-s}\sum_{n\in\Z}\,\VW_{r,c_1(L),n}(S)q^n
$$
to be, roughly speaking, a weight $w/2=-e(S)/2$ modular form, for some appropriate shift $s$. (The precise functional equations are described in \cite[Equation 5.39]{VW} in rank 2, and \cite[Equation 5.22]{LL} in prime rank.)

There have been many compelling calculations confirming these predictions on the instanton branch $\cM_{\rm asd}$ \eqref{cpt1}, usually in the presence of a vanishing theorem (for instance when $K_S\le0$) ensuring that it is both smooth and the monopole branch $\cM_2$ \eqref{cpt2} is empty. Recent work of G\"ottsche and Kool \cite{GK} extends these calculations to the much more difficult setting of general type surfaces $S$.


However, the monopole branch \eqref{cpt2} with nonzero Higgs field has been largely ignored until now.
In Section \ref{quintic} we compute the part of the generating series of Vafa-Witten invariants $\VW$ \eqref{definitive} coming from a series of components with nonzero Higgs field. With some tricks we manage to sum the series in closed form in Proposition \ref{alge}. The result is far from modular --- in fact it is an \emph{algebraic} function of $q$,
\beq{formulah}
c\;(1-q)^{g-1}\left(1+\frac{1-3q}{\sqrt{(1-q)(1-9q)}}\right)^{\!1-g},
\eeq
where $c=(-1)^{p_g(S)+g}\cdot2^{-p_g(S)-1}$ and $g-1=c_1(S)^2$.

In Section \ref{bah} we repeat
the calculation for the bare Euler characteristic version of $\vw$ \eqref{kaidef} (this is reasonable since we expect the Behrend function to be 1 on $\cM_2$). We find something which \emph{is} modular form up to a factor of $(1-q)^{e(S)}$. And calculations with semistable sheaves on K3 surfaces in \cite[Section 5]{TT2} also give modular answers for $\vw$. So for some time this misled us into believing that $\vw$ is the correct invariant.

However, Martijn Kool pointed out that the coefficients of $q^0$ and $q^1$ in \eqref{formulah} match those in a modular form in \cite[Equation 5.38]{VW}. So in the rest of Section \ref{quintic} we compute the contributions of the remaining components of the monopole branch $\cM_2$ \eqref{cpt2} to $\VW$ for $c_2=2,3$. When added to \eqref{formulah} we get agreement also in $q^2$ and $q^3$. That is, we find in \eqref{answ} that
\begin{multline*}
\!\!\sum_{n\in\Z}\VW^{\;\phi\ne0}_{2,K_S,n}(S)q^n\ =\ \\
\quad\frac1{2^{p_g(S)+g}}\prod_{n=1}^\infty(1-q^{2n})^{-12(p_g(S)+1)}(1-q^n)^{2g-2}\left(\sum_{j=0}^\infty q^{j^2+j}\right)^{\!\!1-g} \pmod{q^4}.\!\!
\end{multline*}
The right hand side is the (modular) second term in \cite[Equation 5.38]{VW}.\footnote{Up to an overall sign $(-1)^{\vd}$ which arises from our orientation conventions. Vafa and Witten identify the tangent and cotangent bundles of moduli space using a Riemannian
metric (which is natural from the real point of view), but this changes the natural complex orientation that we use.\label{fnsign}}

This agreement may not sound like much, but given the complexity of the calculations across 15 pages, we feel it cannot be a coincidence. The upshot is that we now believe the virtual localisation invariant $\VW$ of this paper to be the ``correct" one for physics.\footnote{At least in the stable case; for semistable Higgs pairs see \cite{TT2}.} Remarkably, it seems that the generating series of invariants coming from the two types of fixed component (\ref{cpt1}, \ref{cpt2}) can both be written separately in terms of modular-like functions, but the contributions of natural series of subcomponents of $\cM_2$ \eqref{cpt2} cannot.

Even more remarkably, Vafa and Witten predicted the above formula without calculating on the nested Hilbert schemes making up $\cM_2$ \eqref{cpt2}. It seems our geometric techniques are still a lot less powerful then the physics of the early 1990s. The Kiem-Li cosection localisation we use in Section \ref{kiemli} may be a primitive analogue of their perturbation \cite[Equation 5.19]{VW} which they use to localise their calculation to ``cosmic string" contributions. Our results also seem to indicate that the Gieseker compactification of moduli of sheaves is relevant to physics (which usually uses the Uhlenbeck compactification).

In Section \ref{bah} we discuss the modularity properties of the the alternative definition $\vw$ \eqref{kaidef}. These seem to be the \emph{wrong} ones for physics (except when $K_S\le0$). In particular the modular forms in Section \ref{bah} depend on $S$ only through its Euler characteristic $c_2(S)$, whereas Vafa-Witten also predict a dependence on $c_1(S)^2$ when $S$ has general type.

\subsection{Semistable case}
We give a brief overview of the companion paper \cite{TT2}. Since the work of Joyce-Song and Kontsevich-Soibelman uses the Behrend function and weighted Euler characteristics, we can use it --- with some modifications when $h^1(\cO_S)>0$ --- to generalise the invariant $\vw$ to the semistable case.

What came as more of a surprise to us was that modifying the Joycian formalism in a different way --- using virtual localisation in place of weighted Euler characteristics, and replacing Joyce-Song's universal formulae by its logarithm --- seems to work similarly well to give an extension of $\VW$ to the semistable case.

That is, motivated by Mochizuki's work \cite{Mo} and Joyce-Song pairs \cite{JS} we virtually enumerate certain stable pairs $$(\cE_\phi,s)$$ on $X=K_S$ (or equivalently triples $(E,\phi,s)$ on $S$.) Here $\cE_\phi$ is a semistable torsion sheaf with centre of mass zero on the $K_S$ fibres (equivalently $\tr\phi=0$) and $\det\pi_{*\;}\cE_\phi=\det E\cong\cO_S$. Finally $$s\ \in\ H^0(X,\cE_\phi(N))\ \cong\ H^0(S,E(N))$$ is any section which does not factor through a destabilising subsheaf of $\cE_\phi$. Here $N\gg0$ is fixed and sufficiently large that $H^{\ge1}(\cE_\phi(N))=0$ for all semistable sheaves $\cE_\phi$ with the same Chern classes.

There is a symmetric obstruction theory for such pairs, given by combining the $R\Hom\_\perp$ perfect obstruction theory of \eqref{parp} with Joyce-Song's pairs theory. Localising the virtual cycle to the fixed points of the usual $\C^*$ action defines invariants. We conjecture these can be written in terms of universal formulae in $N$; the coefficients of these formulae then define the Vafa-Witten invariants.

We show the conjecture holds in some cases, for instance when stability and semistability coincide: in this case the resulting invariants $\VW$ coincide with the definition \eqref{definitive} of this paper. The conjecture also holds for $\deg K_S<0$, and for K3 surfaces. In the latter case the invariants $\VW$ recover (and generalise) modular forms conjectured by Vafa and Witten.

\subsection{Kapustin-Witten equations on projective surfaces}
It would be natural to try to repeat the same trick for the Kapustin-Witten equations \cite{KW} on a projective surface $S$. On any K\"ahler surface the equations reduce \cite{Ta2} to Simpson's equations, for which there is a Hitchin-Kobayashi correspondence \cite{Si}. Via the spectral construction, solutions can be identified with torsion sheaves on the Calabi-Yau 4-fold $$X\ =\ T^*S.$$
Borisov-Joyce \cite{BJ} describe a \emph{real} analogue of perfect obstruction theory and virtual cycle for sheaves on Calabi-Yau 4-folds $X$, and Cao-Leung \cite{CL} conjecture a way to extend virtual localisation to this setting. We would like to apply this to the natural $\C^*$ action on the fibres of $X\to S$ to define Kapustin-Witten invariants for $S$. But the $\C^*$ action does not preserve the Calabi-Yau form on $X$, so the theory does not immediately apply.

\medskip\noindent\textbf{Acknowledgements.} We are most grateful to Martijn Kool for the crucial observation that --- despite not appearing to give modular forms --- the first few terms of our calculation \eqref{expa} match some formulae in \cite{VW}. Special thanks to Amin Gholampour for pointing out various embarrassing mistakes before anyone else saw them, and a rapid and thorough referee for catching the rest. We also thank Kevin Buzzard, Julien Grivaux, Hiroshi Iritani, Dominic Joyce, Kotaro Kawatani, Davesh Maulik, Rahul Pandharipande, Mauro Porta, Artan Sheshmani, Bal\'azs Szendr\H oi, Yukinobu Toda and Edward Witten for useful conversations.

Y.T. was partially supported by JSPS Grant-in-Aid for Scientific Research numbers JP15H02054 and JP16K05125, and a Simons Collaboration Grant on ``Special holonomy in Geometry, Analysis and Physics". He thanks Seoul National University, NCTS at National Taiwan University, Kyoto University and BICMR at Peking University for support and hospitality during visits in 2015--17 where part of this work was done. 

\subsection{Notation} To keep the notation readable we suppress some obvious pullback maps, and, given a map, we often use the same notation for any basechange thereof. So for instance $\pi$ will denote the standard projection $X\to S$ of \eqref{pidef}, but also the induced map $\cN\times X\to\cN\times S$ (which should properly be called $\id_{\cN}\times\pi$). Therefore, when we let $p\_S$ denote the projection $S\to\Spec\C$, it also denotes the projections $\cN\times S\to\cN$ and $\curly M\times S\to\curly M$.

Where possible we stick to the notation of
$$
(E,\phi) \quad\mathrm{and}\quad \cE=\cE_\phi
$$
for a Higgs pair on $S$ and the associated torsion sheaf on $X=K_S$ respectively. In families these get replaced by
$$
(\E,\Phi) \quad\mathrm{and}\quad \EE=\EE_\Phi
$$
respectively. For instance these could denote the universal Higgs pair over $\cN\times S$ and the universal torsion sheaf over $\cN\times X$ respectively.

Stability always means Gieseker stability, and in this paper we always assume semistability implies stability. We reserve $\cM$ for moduli spaces of stable sheaves on $S$, while $\curly M$ denotes the moduli \emph{stack} of all coherent sheaves on $S$. Similarly $\cN$ is used for moduli spaces of stable Higgs pairs on $S$ or, equivalently, compactly supported stable torsion sheaves on $X=K_S$.

Given a map $\pi$, we denote by $\hom_\pi$ the functor $\pi\circ\hom$ of Homs down the fibres of $\pi$, and by $R\hom_\pi\cong R\pi_*R\hom$ its derived functor.

For a sheaf or complex of sheaves $\cF$, we use $\cF^\vee=R\hom(\cF,\cO)$ to denote its derived dual. For a single sheaf, $\cF^*$ denotes its underived dual $h^0(\cF^\vee)$.

We use $\LL$ to denote Illusie's cotangent complex, and $\TT:=\LL^\vee$ for its dual, the tangent complex.

\section{Spectral construction}\label{ss}
Fix a complex projective algebraic surface $S$ and let
\beq{pidef}
\pi\colon X\To S
\eeq
be the total space of a line bundle $\cL\to S$. (We will mainly be interested in the case $\cL=K_S$.)
We recall the classical spectral construction relating $\cL$-Higgs pairs $(E,\phi)$ on $S$ to compactly supported sheaves $\mathcal E_\phi$ on $X$.

Roughly speaking, over each point $x\in S$, we replace $(E_x,\phi_x)$ by the eigenspaces of $\phi_x\in\Hom(E_x, E_x\otimes \cL_x)$ supported on their respective eigenvalues in $\cL_x$. More precisely we get an equivalence of categories as follows.

\begin{prop}
There is an abelian category $\mathrm{Higgs}_\cL(S)$ of $\cL$-Higgs pairs $(E,\phi)$ on $S$, and an equivalence of categories
\beq{eqcat}
\mathrm{Higgs}_\cL(S)\ \cong\ \mathrm{Coh}_c(X)
\eeq
with the category of compactly supported coherent sheaves on $X$.
\end{prop}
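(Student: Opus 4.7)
\medskip

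The plan is to deduce the equivalence from the fact that $\pi\colon X\to S$ is an affine morphism, so $\pi_*$ gives an equivalence between quasi-coherent $\cO_X$-modules and quasi-coherent $\pi_*\cO_X$-module sheaves on $S$. By construction $X=\Spec_S(\Sym\udot_{\cO_S}\cL^{-1})$, so
$$
\pi_*\cO_X\ =\ \Sym\udot_{\cO_S}\cL^{-1}\ =\ \bigoplus_{n\ge 0}\cL^{-n}.
$$
First I would identify module structures over this sheaf of graded algebras. Since $\Sym\udot \cL^{-1}$ is generated in degree one, giving a $\Sym\udot\cL^{-1}$-action on a quasi-coherent $\cO_S$-module $E$ is the same as specifying an $\cO_S$-linear map $\cL^{-1}\otimes E\to E$; equivalently, tensoring with $\cL$, an $\cL$-Higgs field $\phi\colon E\to E\otimes\cL$. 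Higher symmetric powers act via the iterates $\phi^n\colon E\to E\otimes\cL^n$ and all relations hold automatically because $\Sym\udot$ is free on degree $1$. Morphisms correspond tautologically: $\pi_*\cO_X$-linear maps are exactly maps of underlying $\cO_S$-modules intertwining the two Higgs fields.

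Next I would define the category $\mathrm{Higgs}_\cL(S)$ to consist of coherent $(E,\phi)$ with the evident morphisms, and verify it is abelian --- kernels and cokernels of Higgs-compatible maps inherit Higgs structure because $-\otimes\cL$ is exact on coherent sheaves. Since $\pi$ is affine, $\pi_*$ is exact and faithful, so the equivalence above automatically respects the abelian structures.

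The key remaining point is the matching of the \emph{compact support} condition with \emph{coherence} of $E=\pi_*\cE$ as an $\cO_S$-module. In the direction $(\Rightarrow)$ this is standard: the restriction of $\pi$ to the compact support of $\cE$ is proper, and proper pushforward preserves coherence. For $(\Leftarrow)$, given $(E,\phi)$ with $E$ coherent on $S$, the Cayley--Hamilton identity applied to $\phi$ produces a section
$$
\det\bigl(t\cdot\id_E-\phi\bigr)\ \in\ H^0\!\left(S,\Sym^{r}\cL\right),\qquad r=\rk E,
$$
whose zero locus in $X$ (the classical spectral cover) is finite over $S$ and annihilates $\cE_\phi$. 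Hence $\cE_\phi$ is supported on a closed subscheme finite over $S$, and so compact since $S$ is projective. This is the main thing to check carefully --- in fact it is where the hypothesis that $E$ is coherent is actually used.

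Finally, to display the equivalence concretely, I would write down a two-term resolution
$$
0\To\pi^*(E\otimes\cL^{-1})\xrightarrow{\ \tau-\pi^*\phi\ }\pi^*E\To\cE_\phi\To0,
$$
where $\tau\in H^0(X,\pi^*\cL)$ is the tautological section of $\pi^*\cL$ on the total space $X$. This resolution is exact by the projection formula and the identification of $\pi_*\cO_X$ above, it recovers $\pi_*\cE_\phi=E$, and it will be useful later for computing Ext groups (e.g.\ for the exact sequence \eqref{LEs}). The inverse functor $\cE\mapsto(\pi_*\cE,\phi)$ with $\phi$ induced by multiplication by $\tau$ is then manifestly a quasi-inverse.
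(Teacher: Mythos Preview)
Your approach matches the paper's almost exactly: both use the affine equivalence between $\cO_X$-modules and $\pi_*\cO_X$-modules, identify $\pi_*\cO_X=\bigoplus_{n\ge0}\cL^{-n}$, and observe that degree-one generation reduces a module structure to a single Higgs field. The paper is terser on the compact-support step, simply asserting that $\pi_*\cE$ is coherent iff $\pi|_{\mathrm{supp}\,\cE}$ is proper iff $\cE$ is compactly supported.

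One imprecision: your Cayley--Hamilton argument writes $\det(t\cdot\id_E-\phi)\in H^0(S,\Sym^r\cL)$ with $r=\rk E$, which presumes $E$ is locally free. For a general coherent $E$ (which is all the proposition asks for) this characteristic polynomial is not defined, and even its degree would be wrong in the presence of torsion. The correct statement is the determinant trick for finitely generated modules: locally on $S$, $E$ has $n$ generators and $\phi$ satisfies a monic polynomial of degree $n$ with coefficients in powers of $\cL$, whose zero locus in $X$ is finite over that open set. This gives the finiteness you want without assuming local freeness. Also, the two-term resolution you append at the end is not part of this proposition in the paper --- it appears separately as the next result --- so it is extra here, though harmless.
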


\begin{proof} The maps in Higgs$_\cL(S)$ between $(E,\phi)$ and $(F,\psi)$ are maps $f\colon E\to F$ inducing a commutative diagram
\beq{es}
\xymatrix@R=18pt{
E \ar[r]^-\phi\ar[d]_f & E\otimes \cL \ar[d]_{f\otimes}^{\id_\cL}
\\ F \ar[r]^-\psi & F\otimes \cL.\!}
\eeq
Taking kernels and cokernels of the columns defines kernel and cokernel Higgs pairs.

It is a classical result of Serre (see for instance \cite[Exercise II.5.17]{Ha}) that for affine maps $\pi\colon X\to S$, the functor $\pi_*$ is an equivalence between the abelian category of coherent $\cO_X$-modules and the abelian category of $\pi_*\;\cO_X$-modules on $S$. In our case
\begin{equation}\label{alg}
\pi_*\;\cO_X\ =\ \bigoplus_{i\ge0}\,\cL^{-i}\cdot\eta^i,
\end{equation}
where $\eta$ is the tautological section of $\pi^*\cL$ which is linear on the fibres and cuts out the zero section $S\subset X$.  The affine $X/S$ is Spec of this sheaf of $\cO_S$-algebras, and sheaves $\cE$ on $X$ are equivalent to sheaves of modules $\pi_*\;\cE$ over $\pi_*\;\cO_X$ \eqref{alg}. 

But \eqref{alg} is generated by $\cO_S$ and $\cL^{-1}\cdot\eta$, so a module over it is equivalent to an $\cO_S$-module $E$ together with a commuting action of $\cL^{-1}\cdot\eta$, i.e. an $\cO_S$-linear map
$$\xymatrix{
E\otimes \cL^{-1} \ar[r]^(.58){\pi_*\eta}& E.}
$$
Thus we get an $\cL$-Higgs pair
\beq{higgs2}
(E,\phi)=(\pi_*\;\cE,\pi_*\eta).
\eeq
Conversely, given a Higgs pair $(E,\phi)$ we get an action of $\cL^{-i}\cdot\eta^i$ by
\beq{degi}
\xymatrix{E\otimes \cL^{-i} \ar[r]^(.6){\phi^i}& E.}
\eeq
Summing over $i$ gives an action of $\pi_*\;\cO_X$ \eqref{alg} on $E$. We denote by $\cE_\phi$ the sheaf on $X$ that this $\pi_*\;\cO_X$-module defines.

This equivalence is of course a functor: morphisms of sheaves $\cE$ induce morphisms of their pushdowns $E=\pi_*\;\cE$ which commute with the action of $\eta$, therefore giving commutative diagrams \eqref{es} and so morphisms in Higgs$_\cL(S)$. 

Finally, for $\cE$ coherent, the quasicoherent sheaf $\pi_*\;\cE$ is coherent if and only if $\pi|_{\mathrm{supp}\,\cE}$ is proper, if and only if $\cE$ is compactly supported.
\end{proof}

\subsection{Gieseker stability} \label{Gss}
The $\cL$-Higgs pair $(E,\phi)$ is said to be \emph{Gieseker stable} with respect to an ample line bundle $\cO_S(1)$ if and only if
\beq{stab1}
\frac{\chi(F(n))}{\rk(F)}\ <\ \frac{\chi(E(n))}{\rk(E)} \quad\mathrm{for\ }n\gg0,
\eeq
for every proper $\phi$-invariant subsheaf $F\subset E$ on $S$. Here $\chi$ denotes holomorphic Euler characteristic. Replacing $<$ by $\le$ defines semistability.

Gieseker stability implies slope semistability of $(E,\phi)$, and is implied by slope stability. Recall \eqref{stabb} this is defined by the inequality
$$
\frac{c_1(F)\cdot h}{\rk(F)}\ <\ \frac{c_1(E)\cdot h}{\rk(E)}\,, \quad h:=c_1(\cO(1)).
$$
When the denominator and numerator  of the right hand side (the rank and degree of $E$) are coprime, we further find that
\begin{center}
slope semistable = Gieseker semistable = Gieseker stable = slope stable.
\end{center}

\begin{lem} Under the equivalence of categories \eqref{eqcat}, Gieseker (semi)\-stability of the $\cL$-Higgs pair $(E,\phi)$ with respect to $\cO_S(1)$ is equivalent to Gieseker stability of the sheaf $\cE_\phi$ with respect to $\pi^*\cO_S(1)$.
\end{lem}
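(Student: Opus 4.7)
The plan is to transport the stability condition across the equivalence of categories \eqref{eqcat}, using the fact that $\pi$ is affine to translate Hilbert polynomials cleanly.

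First I would set up the correspondence of subobjects. Because $\pi$ is affine, $\pi_*\colon\mathrm{Coh}_c(X)\to\pi_*\cO_X\text{-mod}$ is an exact equivalence, so it carries subsheaves to subsheaves. Under the identification of $\pi_*\cO_X$-modules with $\cL$-Higgs pairs in \eqref{higgs2}, a coherent subsheaf $\cF\subset\cE_\phi$ on $X$ corresponds precisely to a $\phi$-invariant coherent subsheaf $F=\pi_*\cF\subset \pi_*\cE_\phi=E$ on $S$, and this is a bijection. In particular, if $E$ is torsion-free of (pure) dimension $2$ on $S$, then $\cE_\phi$ is pure of dimension $2$ on $X$: any subsheaf $\cF\subset\cE_\phi$ supported in dimension $\le1$ would have $\pi_*\cF\subset E$ supported in dimension $\le 1$, forcing $\pi_*\cF=0$ and hence $\cF=0$. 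Thus Gieseker stability of $\cE_\phi$ with respect to $\pi^*\cO_S(1)$ is well-defined (the line bundle $\pi^*\cO_S(1)$ is ample on the proper support of $\cE_\phi$).

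Next I would compare Hilbert polynomials. Since $\pi$ is affine, $R^i\pi_*=0$ for $i>0$, and the projection formula gives
$$
\chi\bigl(X,\cF\otimes\pi^*\cO_S(n)\bigr)\;=\;\chi\bigl(S,\pi_*\cF\otimes\cO_S(n)\bigr)\;=\;\chi\bigl(S,F(n)\bigr)
$$
for every $n$ and every subsheaf $\cF\subset\cE_\phi$ (including $\cF=\cE_\phi$ itself). Hence the Hilbert polynomial of $\cF$ on $X$ with respect to $\pi^*\cO_S(1)$ equals that of $F$ on $S$ with respect to $\cO_S(1)$. Both have degree $2$ with leading coefficient $\rk(F)\cdot h^2/2$, so the ``multiplicity'' of $\cF$ appearing in the reduced Hilbert polynomial on $X$ is exactly $\rk(F)$.

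Finally I would conclude. The Gieseker stability inequality \eqref{stab1} for $(E,\phi)$ ranges over proper $\phi$-invariant subsheaves $F\subset E$, while the corresponding inequality for $\cE_\phi$ ranges over proper coherent subsheaves $\cF\subset\cE_\phi$. By step one these two families of test objects coincide, and by step two the quantity $\chi(\cF(n))/\text{mult}(\cF)$ equals $\chi(F(n))/\rk(F)$ for all $n\gg0$ (and similarly for $\cE_\phi$ and $E$). Therefore the two stability conditions, and likewise the two semistability conditions, are equivalent.

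The only non-routine point is verifying that $\cE_\phi$ is pure of dimension $2$ so that the Gieseker condition on $X$ is even meaningful; this is the step I would single out as the main (mild) obstacle, and it follows from the exactness of $\pi_*$ together with torsion-freeness of $E$ as above.
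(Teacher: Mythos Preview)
Your argument is correct and follows essentially the same route as the paper: identify subsheaves of $\cE_\phi$ with $\phi$-invariant subsheaves of $E$ via the equivalence, then use affineness of $\pi$ to equate Hilbert polynomials and leading coefficients. You add the purity check for $\cE_\phi$, which the paper leaves implicit, but otherwise the two proofs coincide.
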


\begin{proof}
Under the equivalence, $\phi$-invariant subsheaves $F\subset E$ are equivalent to subsheaves $\cF\subset\cE_\phi$ on $X$. Moreover $\chi(E)=\chi(\pi_*\;\cE_\phi)=\chi(\cE_\phi)$, so Gieseker stability \eqref{stab1} of $(E,\phi)$ is equivalent to
\beq{stab2}
\frac{\chi(\cF(n))}{r(\cF)}\ <\ \frac{\chi(\cE_\phi(n))}{r(\cE_\phi)} \quad\mathrm{for\ }n\gg0,
\eeq
for all proper subsheaves $\cF\subset\cE_\phi$. Here we have suppressed the pullback map $\pi^*$ on $\cO(1)$, and denoted by $r(\cE_\phi):=\int_X c_1(\cE_\phi)\cdot h^2=\rk(E)\int_Sh^2$ the leading coefficient of its Hilbert polynomial $\chi(\cE_\phi(n))$.

But \eqref{stab2} is the usual Gieseker stability for the torsion sheaf $\cE_\phi$ on $X$, so these two notions of Gieseker stability for $(E,\phi)$ and $\cE_\phi$ coincide.
\end{proof}

\subsection{Resolution}
Since $\cE_\phi$ is generated by its sections down $\pi$ we have a natural surjection $\pi^*E=\pi^*\pi_*\;\cE\rt\ev\cE_\phi$. Its kernel is given by the following.\footnote{The result says that if we divide $\pi^*E$ by the minimal submodule to ensure that $\eta$ acts as $\pi^*\phi$ on the quotient, we get $\cE_\phi$.}

\begin{prop}
There is an exact sequence
\begin{equation}\label{resolution}
\xymatrix@=15pt{
0 \ar[r]& \pi^*(E\otimes \cL^{-1}) \ar[rr]^(.6){\pi^*\phi-\eta}&& \pi^*E \ar[r]^(.53)\ev& \cE_\phi \ar[r]& 0.}
\end{equation}
\end{prop}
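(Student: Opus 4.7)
The plan is to exploit the fact that $\pi\colon X\to S$ is affine, so $\pi_*$ induces an exact equivalence between quasi-coherent $\cO_X$-modules and quasi-coherent $\pi_*\cO_X$-modules on $S$. In particular, exactness of the sequence \eqref{resolution} is equivalent to exactness of the sequence obtained by applying $\pi_*$. By the projection formula,
$$
\pi_*\;\pi^*E\ =\ E\otimes_{\cO_S}\pi_*\;\cO_X\ =\ \bigoplus_{i\ge0}E\otimes \cL^{-i},\qquad \pi_*\;\pi^*(E\otimes \cL^{-1})\ =\ \bigoplus_{j\ge1}E\otimes \cL^{-j},
$$
while $\pi_*\;\cE_\phi=E$. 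Under these decompositions, multiplication by $\eta$ is the identity map shifting the $j$-th summand of the source to the $(j+1)$-th of the target (since $\eta$ has weight one in the grading of $\pi_*\cO_X$), $\pi^*\phi$ acts as $\phi\otimes\id\colon E\otimes\cL^{-j}\to E\otimes\cL^{-j+1}$ preserving grading after a shift, and the pushforward of $\ev$ is the sum map sending the $i$-th summand to $E$ via $\phi^i\colon E\otimes\cL^{-i}\to E$.

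Having set up the combinatorics, the three verifications are essentially formal. The composition $\ev\circ(\pi^*\phi-\eta)$ vanishes by a telescoping identity: in each finite sum, the contributions $\phi^{i-1}(\phi(e_i))-\phi^i(e_i)$ cancel. Injectivity of $\pi^*\phi-\eta$ follows from a top-degree argument: if $\sum_j e_j\eta^j$ lies in the kernel with $N$ the largest nonzero index, the coefficient of $\eta^{N+1}$ in its image is $-e_N$, forcing $e_N=0$. For exactness in the middle, one shows inductively that any element of $\bigoplus_{i\ge0}E\otimes\cL^{-i}$ is congruent modulo $\im(\pi^*\phi-\eta)$ to its image in $E$ under $\ev$: the relation $(\pi^*\phi-\eta)(e\;\eta^j)=\phi(e)\;\eta^{j-1}-e\;\eta^j$ lets one replace $e\;\eta^j$ by $\phi(e)\;\eta^{j-1}$, hence ultimately by $\phi^j(e)\;\eta^0\in E$, and any element of $E$ that maps to $0$ under $\ev$ is already $0$.

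There is no serious obstacle; the main care is simply bookkeeping of the $\cL$-twists and of the signs in the telescoping cancellations. Alternatively, one may check exactness locally on $S$, where a trivialisation of $\cL$ identifies $\pi_*\cO_X$ with the polynomial ring $\cO_S[\eta]$ and the sequence with the standard Koszul-type free resolution of $E$ as an $\cO_S[\eta]$-module (with $\eta$ acting as $\phi$). This local verification is essentially the same calculation and reproduces the claim.
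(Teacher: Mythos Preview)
Your proof is correct. The approach is genuinely different from the paper's, so a brief comparison is in order.

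You argue directly: since $\pi$ is affine, $\pi_*$ is an exact equivalence onto $\pi_*\cO_X$-modules, so it suffices to verify exactness after pushing down. You then write out the graded pieces explicitly and check everything by hand (telescoping for the composite, a top-degree argument for injectivity, an inductive reduction for middle exactness). This is essentially the Koszul resolution of $E$ over $\cO_S[\eta]$ with $\eta$ acting as $\phi$, as you note at the end. There are one or two harmless index slips in your bookkeeping (for instance, in the injectivity step the highest nonzero coefficient in the image sits in degree $N$ rather than $N+1$ under your conventions), but these are cosmetic and the logic is sound.

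The paper instead works geometrically on $X\times_S X$, starting from the obvious sequence $0\to\pi_1^*\cE(-\Delta)\to\pi_1^*\cE\to\Delta_*\cE\to0$, identifying $\cO(\Delta)\cong p^*\cL$ via the section $\eta_1-\eta_2$, and then applying $\pi_{2*}$ together with flat base change $\pi_{2*}\pi_1^*\cong\pi^*\pi_*$. This avoids any explicit graded computation and produces the sequence in one stroke.

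What each buys: your argument is entirely elementary and self-contained, requiring nothing beyond the description of $\pi_*\cO_X$ already established; it also makes the local picture (polynomial ring, $\eta$ acting as $\phi$) completely transparent. The paper's argument is more conceptual and coordinate-free, and the diagonal trick is a useful pattern that recurs elsewhere (e.g.\ in Fourier--Mukai-type manipulations). Both are perfectly adequate for the statement at hand.
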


\begin{proof}
Consider  $X\times_S X$ --- the total space of $\cL^{\oplus2}$ over $S$ --- with its two natural projections $\pi_1,\pi_2$ to $X$ and projection $p$ to $S$. On it is an obvious exact sequence
\begin{equation}\label{pres}
0\To\pi_1^*\cE(-\Delta)\To\pi_1^*\cE\To\Delta_*\cE\To0,
\end{equation}
where $\Delta\cong X\subset X\times_SX$ is the diagonal divisor.

Now $p^*\cL$ has two tautological sections $\eta_1,\eta_2$ whose difference $\eta_1-\eta_2$ cuts out the diagonal. Thus $\cO(\Delta)\cong p^*\cL\cong\pi_2^*\pi^*\cL$ and we can rewrite \eqref{pres} as
$$
\xymatrix@=15pt{
0 \ar[r]& \pi_1^*\cE\otimes\pi_2^*\pi^*\cL^{-1} \ar[rr]^(.63){\eta_1-\eta_2}&& \pi_1^*\cE \ar[r]& \Delta_*\cE \ar[r]& 0.}
$$
Now apply $\pi_{2*}$, using the projection formula and the flat basechange formula $\pi_{2*}\pi_1^*=\pi^*\pi_*$. By \eqref{higgs2} we get \eqref{resolution}.
\end{proof}

\subsection{Deformation theory} \label{defthy}
This resolution \eqref{resolution} of $\cE$ is useful to 
relate the deformation theory of $\cE$, governed by
$$
\Ext^*_X(\cE,\cE),
$$
with that of the Higgs pair $(E,\phi)$, governed by the cohomology groups of the total complex of
$$
\xymatrix@C=35pt{
R\Hom_S(E,E) \ar[r]^(.43){[\ \cdot\ ,\,\phi]} & R\Hom_S(E,E\otimes \cL).}
$$

\begin{prop}
There is an exact triangle
\beq{deftri}
\xymatrix@C=18pt{R\Hom(\cE,\cE) \ar[r]& R\Hom(E,E) \ar[rr]^(.42){\circ\phi-\phi\circ}&& R\Hom(E\otimes \cL^{-1},E).\!\!}
\eeq
\end{prop}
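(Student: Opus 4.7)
The plan is to apply the contravariant functor $R\Hom_X(-,\cE)$ to the locally free resolution \eqref{resolution} on $X$ and then rewrite the resulting exact triangle in terms of sheaves on $S$ via Grothendieck--Serre adjunction. Applying $R\Hom_X(-,\cE)$ to the two-term complex $\pi^*(E\otimes\cL^{-1}) \xrightarrow{\pi^*\phi-\eta} \pi^*E$ quasi-isomorphic to $\cE$ yields
$$
R\Hom_X(\cE,\cE) \to R\Hom_X(\pi^*E,\cE) \to R\Hom_X(\pi^*(E\otimes\cL^{-1}),\cE),
$$
with connecting map induced by precomposition with $\pi^*\phi-\eta$.

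Second, since $\pi\colon X\to S$ is affine, $R^{>0}\pi_*\cE=0$ and so $R\pi_*\cE = \pi_*\cE = E$ by \eqref{higgs2}; since $\pi$ is flat, $L\pi^*=\pi^*$. Derived adjunction then identifies $R\Hom_X(\pi^*F,\cE) \cong R\Hom_S(F,R\pi_*\cE)=R\Hom_S(F,E)$ for any $F$ on $S$. Applying this with $F=E$ and $F=E\otimes\cL^{-1}$ turns the triangle into the one stated in the proposition.

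The remaining (and main) task is to identify the connecting map with the commutator $[\,\cdot\,,\phi]=\circ\phi-\phi\circ$. The $\pi^*\phi$ summand is routine: by naturality of the unit of the $\pi^*\dashv\pi_*$ adjunction, precomposition with $\pi^*\phi$ on the $X$-side corresponds to precomposition with $\phi$ on the $S$-side, sending $\tilde g\in\Hom(E,E)$ to $\tilde g\circ\phi\in\Hom(E\otimes\cL^{-1},E)$. For the $\eta$ summand one uses the defining property \eqref{degi} of the spectral correspondence: the $\pi_*\cO_X$-module structure on $\pi_*\cE=E$ has $\eta$ acting as $\phi$. A short adjunction chase, exploiting the $\pi_*\cO_X$-linearity of $\pi_*g$ for $g\colon\pi^*E\to\cE$, then shows that precomposition with $\eta$ corresponds to postcomposition with $\phi$, sending $\tilde g$ to $\phi\circ\tilde g$. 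Subtracting yields $\tilde g\mapsto\tilde g\circ\phi-\phi\circ\tilde g$, as required.

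The only real (if mild) obstacle is the last identification: bookkeeping through the graded algebra $\pi_*\cO_X=\bigoplus_{i\ge0}\cL^{-i}\eta^i$ must be done carefully, but on an affine patch where $E$ is trivial the computation reduces to the tautological statement that the $\eta$-action on the $\pi_*\cO_X$-module $E$ is by $\phi$, so that $g(\eta\cdot(e\otimes 1))=\phi(\tilde g(e))$; globalising gives the claimed map.
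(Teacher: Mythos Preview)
Your proof is correct and follows essentially the same route as the paper: apply $R\Hom(-,\cE)$ to the resolution \eqref{resolution}, use $(\pi^*,\pi_*)$-adjunction with $R\pi_*\cE=\pi_*\cE=E$, and identify the induced map by noting that precomposition with $\pi^*\phi$ becomes $\circ\phi$ while precomposition with $\eta$ becomes postcomposition with $\pi_*\eta=\phi$. The only cosmetic difference is that the paper works first with sheaf-$R\hom$, then applies $R\pi_*$ (obtaining the local triangle \eqref{locv}, reused later), and finally $R\Gamma_S$, whereas you invoke the global derived adjunction in one step; the content is the same.
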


\begin{proof}
Applying $R\hom(\ \cdot\ ,\cE)$ to \eqref{resolution} gives the exact triangle
$$
\xymatrix@C=20pt{
R\hom(\cE,\cE) \ar[r]& R\hom(\pi^*E,\cE) \ar[rr]^(.42){\circ\pi^*\phi-\eta}&& R\hom(\pi^*(E\otimes \cL^{-1}),\cE).}
$$
Applying $R\pi_*$ and adjunction gives 
$$
\xymatrix@C=20pt{R\hom_\pi(\cE,\cE) \ar[r]& R\hom(E,\pi_*\;\cE) \ar[rr]^(.42){\circ\phi-\pi_*\eta}&& R\hom(E\otimes \cL^{-1},\pi_*\;\cE).}
$$
Since $\pi_*\eta\colon\pi_*\;\cE\to\pi_*\;\cE\otimes \cL$ is $\phi\colon E\to E\otimes \cL$ by \eqref{higgs2} this is
\beq{locv}
\xymatrix@C=20pt{R\hom_\pi(\cE,\cE) \ar[r]& R\hom(E,E) \ar[rr]^(.44){\circ\phi-\phi\circ}&& R\hom(E,E\otimes \cL)}.
\eeq
Taking $R\Gamma_{\!S}$ gives \eqref{deftri}.
\end{proof}
        
In particular, taking cohomology in \eqref{deftri} gives the long exact sequence \eqref{LEs} of the introduction.

\subsection{In families} We can generalise to \emph{families} of sheaves over $S$ and $X$, and, moreover, we can let $S$ and $X$ themselves vary smoothly (to prove deformation invariance of our invariants). So we now let $S\to B$ be a smooth projective morphism with 2-dimensional fibres, and we let $X\to B$ denote the total space of a line bundle $\cL$ (soon to be the relative canonical bundle $K_{S/B}$) over $S$. On a first viewing, the reader should set $B=\Spec\C$ and forget all about it.

Let $\cN\to B$ denote the moduli space of Gieseker stable Higgs pairs on the fibres of $S\to B$ with fixed rank\footnote{In this paper we always take rank $r>0$. When $r=0$ the obstruction theory \eqref{parp} has to be modified.} and Chern classes $(r,c_1,c_2)$. Equivalently it is a moduli space of compactly supported stable torsion sheaves on the fibres of $X\to B$, with rank 0 and
\begin{eqnarray}
c_1 &=& r[S], \nonumber \\
c_2 &=& \iota_*\Big(\frac{r(r+1)}2c_1(\cL)-c_1\Big), \label{chern} \\
c_3 &=& \iota_*\Big(c_1^2-2c_2-(r+1)c_1\cdot c_1(\cL)+\frac{r(r+1)(r+2)}6c_1(\cL)^2\Big) \hspace{-1cm} \nonumber
\end{eqnarray}
in $H^*_c(X_t,\Z)$. Here $\iota\colon S\into X$ is the zero section, $[S]$ its Poincar\'e dual, and $X_t$ is the fibre of $X\to B$ over any closed point $t\in B$.

Pick a (twisted)\footnote{Although a global universal sheaf may not exist in general due to $\C^*$-automorphisms, one can work locally over $\cN$ (where one always exists) or globally with a \emph{twisted} universal sheaf. We  can ignore this issue since we are only concerned with (derived functors of) $\hom(\cE,\cE)$, which exists globally and glues uniquely
and independently of any choices.} universal sheaf
$$
\EE\ \ \mathrm{over}\ \ \cN\times_BX.
$$
As usual we let $\pi$ denote both the projection $X\to S$ of \eqref{pidef} and (cf. Notation section) any basechange such as $\cN\times_BX\to\cN\times_BS$.
Since $\EE$ is flat over $\cN$ and $\pi$ is affine,
\beq{higgs3}
\E:=\pi_{*\,}\EE\ \ \mathrm{over}\ \ \cN\times_BS
\eeq
is also flat over $\cN$. It is also coherent because $\EE$ is finite over $\cN\times_BS$. Thus it defines a classifying map
\begin{eqnarray} \label{Pi}
\hspace{1cm} \Pi\colon\ \quad\cN\! &\To& \curly M, \\
\cE\, &\Mapsto& \!\pi_{*\;}\cE\quad\mathrm{(or,\ equivalently,}\quad (E,\phi)\Mapsto E), \nonumber
\end{eqnarray}
to the moduli \emph{stack} $\curly M$ of coherent sheaves on the fibres of $S\to B$ of the given rank and Chern classes. We will often not distinguish between the universal sheaf $\E$ over $\curly M\times S$ and its pullback $\E=\Pi^*\E$ over $\cN\times S$. \medskip

By the same working as above, over the families
$$
p\_X\colon\cN\times_BX\to\cN\ \ \mathrm{and}\ \ p\_S\colon\cN\times_BS\to\cN
$$
instead of on single copies of $X$ and $S$, we replace \eqref{deftri} by the exact triangle
\beq{deftri2}
\xymatrix@C=30pt{
R\hom_{p\_X}(\EE,\EE) \ar[r]^{\pi_*}& R\hom_{p\_S}(\E,\E) \ar[r]^(.42){[\ \cdot\ ,\,\phi]} & R\hom_{p\_S}(\E,\E\otimes \cL).}\hspace{-2mm}
\eeq
Presently we will see how this relates the obstruction theories of $\cN$, $\curly M$ and $\Pi\colon\cN\to\curly M$ respectively.
From now on, for simplicity we restrict attention to the case of interest $\cL=K_{S/B}$. (Later in Section \ref{later} we will sketch what small changes have to be made in the general case.) We always work relative to the base $B$, but often suppress mention of it.

\begin{prop}\label{symmetric}
Setting $\cL=K_{S/B}$, the exact triangle \eqref{deftri2} is its own Serre dual.
\end{prop}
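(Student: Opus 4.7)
The plan is to apply relative Grothendieck--Serre duality termwise to \eqref{deftri2} and observe that the resulting triangle is, after a shift, exactly a rotation of the original.

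First I would record the three Serre-duality identifications. Since $\cL=K_{S/B}$, line-bundle adjunction gives $K_{X/B}=\pi^*(K_{S/B}\otimes\cL^{-1})\cong\cO_X$, so $X\to B$ is a relative Calabi--Yau threefold. Because $\EE$ has $p_X$-proper support, Grothendieck--Serre duality along $p_X$ yields
\[
R\hom_{p_X}(\EE,\EE)^\vee\;\cong\;R\hom_{p_X}\bigl(\EE,\EE\otimes K_{X/B}\bigr)[3]\;\cong\;R\hom_{p_X}(\EE,\EE)[3].
\]
On the proper morphism $p_S$ of relative dimension $2$, the standard identity $R\hom_{p_S}(F,G)^\vee\cong R\hom_{p_S}(G,F\otimes K_{S/B})[2]$ specialises to
\[
R\hom_{p_S}(\E,\E)^\vee\;\cong\;R\hom_{p_S}\bigl(\E,\E\otimes K_{S/B}\bigr)[2]
\]
and in turn to $R\hom_{p_S}(\E,\E\otimes K_{S/B})^\vee\cong R\hom_{p_S}(\E,\E)[2]$.

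Applying $(-)^\vee$ to \eqref{deftri2} reverses the arrows; substituting the identifications above gives the distinguished triangle
\[
R\hom_{p_S}(\E,\E)[2]\To R\hom_{p_S}\bigl(\E,\E\otimes K_{S/B}\bigr)[2]\To R\hom_{p_X}(\EE,\EE)[3],
\]
which is the $[2]$-shift of the one-step forward rotation of \eqref{deftri2}. At the level of objects this is the claimed self-duality.

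The main obstacle I expect is map compatibility: one must check that, under the identifications above, the Serre-dual of the pushforward $\pi_*$ agrees (up to a canonical sign) with the commutator $[\,\cdot\,,\phi]$ of the rotated triangle. My plan is to derive \eqref{deftri2} at the chain level by applying $R\hom_{p_X}(-,\EE)$ to the resolution \eqref{resolution} (as in the proof of \eqref{deftri}), and to exploit that the differential $\pi^*\phi-\eta$ there is self-adjoint for the Calabi--Yau pairing on $X$: the tautological section $\eta$ pairs symmetrically with the trivialisation of $K_{X/B}$, while $\pi^*\phi$ is exchanged with $\phi$ by the $(\pi^*,\pi_*)$-adjunction together with the Serre pairing between $\E$ and $\E\otimes K_{S/B}$. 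A cleaner alternative is to use that two vertices and one map already determine a distinguished triangle up to isomorphism, so it suffices to match the Serre-dual of $\pi_*$ with $[\,\cdot\,,\phi]$, a standard Grothendieck duality compatibility for the affine morphism $\pi$.
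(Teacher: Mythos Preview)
Your term-by-term identifications and the observation that dualising \eqref{deftri2} returns a shift of its rotation at the level of \emph{objects} are correct and match the paper exactly. The paper likewise dismisses the self-duality of $[\,\cdot\,,\phi]$ as ``simple'' and devotes all of the work to the remaining map.

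However, you have mis-identified which maps must be matched. In the rotated triangle
\[
R\hom_{p_S}(\E,\E)\rt{[\,\cdot\,,\phi]}R\hom_{p_S}(\E,\E\otimes K_{S/B})\rt{\ \partial\ }R\hom_{p_X}(\EE,\EE)[1],
\]
the second arrow is the \emph{connecting morphism} $\partial$ of \eqref{deftri2}, not $[\,\cdot\,,\phi]$. So what must be checked is that the Serre dual of $\pi_*$ is $\partial$ (equivalently, that $\pi_*$ and $\partial$ are adjoint for the Serre pairings), not that $(\pi_*)^\vee=[\,\cdot\,,\phi]$ as you twice assert. This is precisely what the paper proves: it identifies $\partial$ as cup product with the extension class $e\in\Ext^1(\EE,\pi^*\E\otimes K_{S/B}^{-1})$ of the resolution \eqref{resolution}, and then shows $\tr_X\!\big((\,\cdot\,)\circ(\,\cdot\,)\circ e\big)=\tr_S\!\big(\pi_*(\,\cdot\,)\circ(\,\cdot\,)\big)$ by computing that, under the adjunction $\Ext^1_X(\EE,\pi^!\E[-1])\cong\Hom_S(\E,\E)$, the class $e$ corresponds to $\id_E$. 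Your sketch (``$\pi^*\phi-\eta$ is self-adjoint for the Calabi--Yau pairing'') gestures toward this but does not supply the argument; the identification $e\mapsto\id_E$ is the content.

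Your ``cleaner alternative'' also has a gap beyond the mislabelling. Matching one arrow and invoking TR3 gives only a \emph{non-canonical} isomorphism of cones; it does not show that the \emph{specific} Serre-duality isomorphism $R\hom_{p_X}(\EE,\EE)^\vee\cong R\hom_{p_X}(\EE,\EE)[3]$ is the one compatible with the triangle. The paper needs exactly this compatibility of pairings for Corollary~\ref{symme} (splitting off $Rp_{S*}K_{S/B}[-1]$ via the Serre dual of the trace), so the shortcut would not suffice for the intended application.
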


\begin{proof}
The claim is that replacing each term in \eqref{deftri2} with its relative Serre dual (down the $X$- and $S$-fibres respectively), and the arrows by their duals,
$$
\xymatrix@C=15pt{
R\hom_{p\_X}(\EE,\EE)[3] & R\hom_{p\_S}(\E,\E\otimes K_{S/B})[2] \ar[l] & R\hom_{p\_S}(\E,\E)[2], \ar[l]}
$$
we get the same exact triangle, just shifted.

For the right hand arrows, this is simple. We concentrate on the left hand arrows. The claim is that the following horizontal arrows
$$
\xymatrix@R=-1pt{
R\hom_{p\_X}(\EE,\EE) \ar[r]^{\pi_*}& R\hom_{p\_S}(\E,\E)
\\ \otimes & \otimes \\
R\hom_{p\_X}(\EE,\EE)[3] \ar[dddd]& R\hom_{p\_S}(\E,\E\otimes K_{S/B})[2] \ar[l]^(.52)\partial\ar[dddd] \\\\\\\\
\cO_{\cN}\!\! & \cO_{\cN}\!\!}
$$
intertwine the vertical pairings given by cup product and trace. Here $\partial$ is the coboundary morphism of the exact triangle \eqref{deftri2}. That is, identifying
\begin{eqnarray} \nonumber
R\hom_{p\_S}(\E,\E\otimes K_{S/B}) &\cong&
R\hom_{p\_S}(\E\otimes K_{S/B}^{-1},\pi_*\;\EE) \\ \label{ide}
&\cong& R\hom_{p\_X}(\pi^*\E\otimes K_{S/B}^{-1},\EE),
\end{eqnarray}
it is cup product with the canonical extension class
\beq{extc}
e\,\in\,\Ext^1_{X\times\cN}(\EE,\pi^*\E\otimes K_{S/B}^{-1})
\eeq
of the resolution \eqref{resolution}. So we want to show that the two parings
$$
\hspace{-1cm}\xymatrix@R=0pt@C=65pt{
\hspace{6mm} R\hom_{p\_X}(\EE,\EE)\stackrel L\otimes R\hom_{p\_X}(\pi^*\E\otimes K_{S/B}^{-1},\EE)[2]\hspace{6mm} \ar@{=}[dd]\ar[rd]^(.73){\tr\_X\big((\ \cdot\ )\circ(\ \cdot\ )\circ e\big)} \\ & \cO_{\cN} \\
R\hom_{p\_X}(\EE,\EE)\stackrel L\otimes R\hom_{p\_S}(\E,\E\otimes K_{S/B})[2] \ar[ru]_(.72){\tr\_S\big(\pi_*(\ \cdot\ )\circ(\ \cdot\ )\big)}}
$$
are equal. Since $\tr(ab)=\tr(ba)$ the upper map may be rewritten
\beq{trs}
\tr\_S\big(e\circ(\ \cdot\ )\circ(\ \cdot\ )\big)
\eeq
when we identify the product of the first two terms in \eqref{trs} as lying in
\begin{eqnarray} \nonumber
R\hom_{p\_X}(\EE,\pi^*\E\otimes K_{S/B}^{-1})[1] &\cong& 
R\hom_{p\_X}(\EE,\pi^!\E) \\ \label{adj}
&\cong& R\hom_{p\_S}(\pi_*\;\EE,\E)\ \cong\ R\hom_{p\_S}(\E,\E)
\end{eqnarray}
(since the dualising complex of $\pi$ is $\pi^*K_{S/B}^{-1}[1]$) and the last in
$$
R\hom_{p\_X}(\pi^*\E\otimes K_{S/B}^{-1},\EE)[2]\ \cong\ 
R\hom_{p\_S}(\E,\E\otimes K_{S/B})[2].
$$
The identification \eqref{adj} is isomorphic to the composition
$$
R\hom_{p\_X}(\EE,\pi^!\E)\rt{\pi_*}
R\hom_{p\_S}(\pi_*\;\EE,\pi_*\pi^!\E)\rt{\epsilon}
R\hom_{p\_S}(\pi_*\;\EE,\E),
$$
where $\epsilon\colon\pi_*\pi^!\to\id$ is the counit. Therefore the product of the first two terms of \eqref{trs} is
$$
\epsilon\circ\pi_*\big(e\circ(\ \cdot\ )\big)\ =\ 
\epsilon\circ\pi_*(e)\circ\pi_*(\ \cdot\ ).
$$
But $\epsilon\circ\pi_*(e)$ is the image of $e$ under the identification
$$
e\,\in\,\Ext^1_{X\times\cN}(\EE,\pi^!\E[-1])\ \cong\ \Hom_{S\times\cN}(\pi_*\;\EE,\E)\ \cong\ \Hom_{S\times\cN}(\E,\E),
$$
which is $\id_E$. Thus \eqref{trs} is $\tr\_S\big(\pi_*(\ \cdot\ )\circ(\ \cdot\ )\big)$, as required.
\end{proof}

\begin{cor} \label{symme}
The exact triangle \eqref{deftri2} fits into the following
commutative diagram of exact triangles with split columns
$$
\xymatrix@R=18pt@C=3pt{
R\hom_{p\_S}(\E,\E\otimes K_{S/B})\_0[-1] \ar@{<->}[d]\ar[r]& R\hom_{p\_X}(\EE,\EE)\_\perp \ar[r]\ar@{<->}[d]& R\hom_{p\_S}(\E,\E)\_0 \ar@{<->}[d] \\
R\hom_{p\_S}(\E,\E\otimes K_{S/B})[-1] \ar@{<->}[d]^{\tr}_\id\ar[r]& R\hom_{p\_X}(\EE,\EE) \ar[r]\ar@{<->}[d]& R\hom_{p\_S}(\E,\E) \ar@{<->}[d]^\tr_\id \\
Rp\_{S*}K_{S/B}[-1] \ar@{<->}[r]& Rp\_{S*}K_{S/B}[-1]\oplus Rp\_{S*}\cO_S \ar@{<->}[r]& Rp\_{S*}\cO_S,\!}
$$
where the suffix 0 denotes trace-free Homs. Letting $R\hom_{p\_X}(\EE,\EE)\_\perp$ denote the co-cone of the central column, it is Serre dual to its own shift by $[3]$.
\end{cor}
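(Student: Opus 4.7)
My plan is to build the claimed $3\times 3$ diagram row-by-row: take the deformation triangle \eqref{deftri2} as the middle row, adjoin a split ``trace'' triangle along the bottom, and then read off the top row and the self-duality of the co-cone $R\hom_{p\_X}(\EE,\EE)\_\perp$ using the octahedral axiom together with Proposition \ref{symmetric}.

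First I would check that the outer two columns split as claimed. The trace maps
$$ \tr\colon R\hom_{p\_S}(\E,\E) \To Rp\_{S*}\cO_S, \qquad \tr\colon R\hom_{p\_S}(\E,\E\otimes K_{S/B}) \To Rp\_{S*}K_{S/B} $$
admit canonical sections $\tfrac{1}{r}\id_\E$ (and its twist by $K_{S/B}$), and their fibres are by definition the trace-free pieces that appear as the outer terms of the top row. For the middle column I would exploit the vanishing $\tr[\,\cdot\,,\phi]=0$, which says the map $[\,\cdot\,,\phi]$ in \eqref{deftri2} is compatible, via these traces, with the zero map $Rp\_{S*}\cO_S \xrightarrow{0} Rp\_{S*}K_{S/B}$; the induced morphism on cones is the desired ``trace''
$$ R\hom_{p\_X}(\EE,\EE) \To Rp\_{S*}K_{S/B}[-1]\oplus Rp\_{S*}\cO_S. $$
A section splitting this column is obtained by pairing the identity $\cO_X\to R\hom(\EE,\EE)$ (giving the $\cO_S$ summand) with the composition
$$Rp\_{S*}K_{S/B}[-1]\xrightarrow{\,\id_\E\otimes\,} R\hom_{p\_S}(\E,\E\otimes K_{S/B})[-1] \To R\hom_{p\_X}(\EE,\EE),$$
the second arrow being the coboundary of \eqref{deftri2}; the relation $[\id_\E,\phi]=0$ forces these to combine consistently. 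With all three columns now split, the octahedral axiom produces the top row as an exact triangle whose middle term is by construction $R\hom_{p\_X}(\EE,\EE)\_\perp$.

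The self-duality of the co-cone shifted by $[3]$ then follows from Proposition \ref{symmetric} plus the observation that $S$-relative Serre duality \emph{interchanges} the two splittings: the trace $R\hom_{p\_S}(\E,\E)\to Rp\_{S*}\cO_S$ is Serre dual to the identity-twist section $Rp\_{S*}K_{S/B}\to R\hom_{p\_S}(\E,\E\otimes K_{S/B})$, and symmetrically. Simultaneous self-duality of the middle row (Proposition \ref{symmetric}) and this swap of the bottom row force a compatible self-duality on the fibre $R\hom_{p\_X}(\EE,\EE)\_\perp$, shifted by $[3]$. The main obstacle is precisely this last step: tracking through the chain of adjunctions \eqref{ide}--\eqref{adj} used in the proof of Proposition \ref{symmetric} to verify that the section we chose for the middle column is literally, up to the shift by $[3]$, the Serre dual of our chosen trace. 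Once the pairing identity $\tr\_S(\id_\E\circ\psi)=\tr(\psi)$ is extracted from those identifications, the self-duality of $R\hom_{p\_X}(\EE,\EE)\_\perp$ is automatic.
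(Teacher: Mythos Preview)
Your argument is correct and close in spirit to the paper's, but the paper sidesteps exactly the ``main obstacle'' you flag. You build the $K_{S/B}[-1]$-section on the middle column independently, via the coboundary of \eqref{deftri2}, and then must check after the fact that it is Serre dual to your $\cO_S$-trace. The paper instead constructs only the $\cO_S$-maps \eqref{maps} directly (from $\cO_S\xrightarrow{\id}R\hom_\pi(\EE,\EE)$ and the composition \eqref{OM2}), and then \emph{defines} the $K_{S/B}[-1]$-maps \eqref{OM} to be their Serre duals. What then needs checking is not a pairing identity but simply that these Serre-dualised maps still commute with the left column; this is immediate from Proposition~\ref{symmetric}, since Serre duality carries the identity and trace on $R\hom_{p\_S}(\E,\E)$ to the trace and identity on $R\hom_{p\_S}(\E,\E\otimes K_{S/B})$. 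With that choice the self-duality of the co-cone is built in and no separate verification is needed. Your route works too, but is longer for precisely the reason you identify.
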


\begin{proof}
The left and right hand columns are split because $\tr\circ\id=\rk\ne0$.

The composition
\beq{OM2}
\cO_S\rt{\id}R\hom_\pi(\EE,\EE)\rt{\pi_*}R\hom(\E,\E)\rt{\tr}\cO_S
\eeq
is $\rk(E)\cdot\id$, so the first arrow is split. Apply $Rp\_{S*}$ and use $Rp\_{S*}R\pi_*=Rp\_{X*}$ to give the maps
\beq{maps}
R\hom_{p\_X}(\EE,\EE)\longleftrightarrow Rp\_{S*}\cO_S
\eeq
lifting the corresponding maps on $R\hom_{p\_S}(\E,\E)$. 

The Serre duals of the maps \eqref{maps} give the maps 
\beq{OM}
R\hom_{p\_X}(\EE,\EE)\longleftrightarrow Rp\_{S*}K_{S/B}[-1].
\eeq
By Proposition \ref{symmetric} these also commute with the Serre duals of the identity and trace maps on $R\hom_{p\_S}(\E,\E\otimes K_{S/B})[-1]$. Since these are the trace and identity maps respectively, the result follows.
\end{proof}

This co-cone $R\hom_{p\_X}(\EE,\EE)\_\perp$ will eventually provide the symmetric perfect obstruction theory we need for the moduli space $\cN_L^\perp$ of stable trace-free fixed-determinant Higgs pairs; see Theorem \ref{final}.

\section{Perfect obstruction theory for $U(r)$ theory}\label{potsec}
We will describe the perfect obstruction theories of $\cN$ and $\curly M$ and, for later use, their relationship under $\Pi$ \eqref{Pi}. The exact triangle of cotangent complexes
$$
\Pi^*\LL_{\curly M/B}\rt{\Pi^*}\LL_{\cN/B}\To\LL_{\cN/\curly M}
$$
may fail to be exact if we use truncated cotangent complexes as in \cite{HT}. Therefore we use full cotangent complexes, to which \cite{HT} does not immediately apply. However, since we are deforming sheaves ($\cE$ and $E$) rather than complexes of sheaves, we can use  Illusie's seminal work \cite{Ill}.

\subsection{Atiyah classes}
Illusie \cite[Section III.2.3]{Ill} defines the Atiyah class of a coherent sheaf $\cF$ on a $B$-scheme $Z$ as follows. Using the notation of Gillam \cite{Gi} we let
$$
Z[\cF]:=\Spec\_{\!\!Z}(\cO_Z\oplus\cF)\rt{q}Z
$$
be the trivial square-zero thickening of $Z$ by $\cF$. Here $\cO_Z\oplus\cF$ is an $\cO_Z$-algebra in the obvious way: $(f,s)\cdot(g,t):=(fg,ft+gs)$. It carries a $\C^*$ action, fixing $\cO_Z$ and acting on $\cF$ with weight 1. We get the following exact triangle of $\C^*$-equivariant cotangent complexes on $Z[\cF]$,
\beq{LLLL}
q^*\LL_{Z/B}\To\LL_{Z[\cF]/B}\To\LL_{Z[\cF]/Z}\To q^*\LL_{Z/B}[1].
\eeq
Applying $q_*$ and taking weight 1 parts is an exact functor. Applied to the last two terms of \eqref{LLLL} it yields
\beq{atdef}
\cF\To\cF\otimes\LL_{Z/B}[1]
\eeq
on $Z$. We call this map $\At_{\cF}\in\Ext^1(\cF,\cF\otimes\LL_{Z/B})$ the Atiyah class of $\cF$.\medskip

We apply this to the universal sheaf $\EE$ on $\cN\times_BX$ and project to the first summand of $\LL_{\cN\times_BX/B}=\LL_{\cN/B}\oplus\LL\_{X/B}$ (suppressing some pullback maps) to give the partial Atiyah class
\beq{par}
\At_{\EE\!,\,\cN}\colon\EE\To\EE\otimes\LL_{\cN/B} \quad\mathrm{on\ }\cN\times_BX.
\eeq
Similarly applied to $\E$ on $\cN\times_BS$ and projecting $\LL_{\cN\times_BS}$ to $\LL_{\cN/B}$ defines a partial Atiyah class
\beq{parS}
\At_{\E,\,\cN}\colon\E\To\E\otimes\LL_{\cN/B}[1] \quad\mathrm{on\ }\cN\times_BS.
\eeq
The relationship between these two classes turns out to be very simple, and will allow us to relate the deformation theories of $\cE$ and $E$.

\begin{prop}\label{funcAt} Applying $\pi_*$ to \eqref{par},
$$
\pi_*\At_{\EE\!,\,\cN}\colon\pi_{*\,}\EE\To\pi_{*\,}\EE\otimes\LL_{\cN/B}[1]
$$
gives \eqref{parS}. That is, $\pi_{*\!}\At_{\EE\!,\,\cN}=\At_{\E,\,\cN\,}$.
\end{prop}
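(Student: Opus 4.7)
The plan is to apply Illusie's naturality of the Atiyah class \cite{Ill} to a canonical affine map between the trivial square-zero thickenings used to build $\At_\EE$ and $\At_\E$. Abbreviate $W := \cN \times_B X$ and $U := \cN \times_B S$. Since $\pi$ is affine and $\E = \pi_*\EE$, there is a canonical $\C^*$-equivariant morphism of $\cO_U$-algebras
$$\cO_U \oplus \E \,\To\, \pi_*\cO_W \oplus \E \,=\, \pi_*(\cO_W \oplus \EE)$$
(the unit on the first factor, the identity on the second), which on $\Spec$ produces an affine $\C^*$-equivariant morphism $\tilde\pi\colon W[\EE] \to U[\E]$ satisfying $q_U \circ \tilde\pi = \pi \circ q_W$.

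Next, Illusie's functoriality of the full cotangent complex applied to $\tilde\pi$, and to the two factorisations $W[\EE]\to U[\E]\to U$ and $W[\EE]\to W\to U$, produces a $\C^*$-equivariant commutative diagram of exact triangles on $W[\EE]$:
$$\xymatrix@R=15pt@C=18pt{
q_W^*\pi^*\LL_{U/B} \ar[r]\ar[d] & \tilde\pi^*\LL_{U[\E]/B} \ar[r]\ar[d] & \tilde\pi^*\LL_{U[\E]/U} \ar[d] \\
q_W^*\LL_{W/B} \ar[r] & \LL_{W[\EE]/B} \ar[r] & \LL_{W[\EE]/W}.}$$
Adjointing via $\tilde\pi^* \dashv \tilde\pi_*$, applying $q_{U*}$ and using $q_{U*}\tilde\pi_* = \pi_*q_{W*}$, the $\C^*$-weight $1$ component of the coboundary map from the upper triangle is Illusie's Atiyah class $\At_\E\colon \E \to \E \otimes \LL_{U/B}[1]$, while that from the lower triangle is $\pi_*\At_\EE\colon \E \to \pi_*(\EE \otimes \LL_{W/B})[1]$. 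The vertical arrows then fit these two morphisms into a commutative square.

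Finally, I would project $\LL_{U/B}$ and $\LL_{W/B}$ onto their $\LL_{\cN/B}$ direct summands (existing because $X/B$ and $S/B$ are smooth, so the cotangent complexes of these product schemes split). Since $\pi = \id_\cN \times \pi_{X/S}$ acts as the identity on the $\cN$-factor, the functoriality map $\pi^*\LL_{U/B} \to \LL_{W/B}$ restricts to the identity on the $\LL_{\cN/B}$-summands, and the projection formula gives $\pi_*(\EE \otimes \pi^*p_U^*\LL_{\cN/B}) \cong \E \otimes p_U^*\LL_{\cN/B}$. Both vertical maps of the commutative square then become identities, yielding $\pi_*\At_{\EE,\cN} = \At_{\E,\cN\,}$.

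The main obstacle is the careful bookkeeping of Illusie's functoriality at the level of full (rather than truncated) cotangent complexes, together with the interactions among adjunction, affine pushforward, projection formula and weight-$1$ extraction. These all behave well thanks to affineness of $\pi$ and $\tilde\pi$, the vanishing of $R^{>0}\pi_*\EE$, and the identification $\tilde\pi^*\E\cong\EE$ of ideal sheaves inside $\cO_{W[\EE]}$.
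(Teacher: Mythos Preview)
Your approach is essentially the paper's own: you build the same $\C^*$-equivariant affine map $\tilde\pi$ (the paper calls it $\rho$) between the square-zero thickenings, invoke the same functoriality of Illusie's cotangent complex, extract weight-$1$ parts, project to $\LL_{\cN/B}$, and finish with $\pi^*\dashv\pi_*$ adjunction. The paper makes the final adjunction step more explicit --- writing out that under $\Hom_W(\pi^*\E,\EE\otimes\LL_{\cN/B}[1])\cong\Hom_U(\E,\E\otimes\LL_{\cN/B}[1])$ the two composites in the weight-$1$ square correspond to $\At_{\E,\cN}$ and $\pi_*\At_{\EE,\cN}$ respectively --- but the content is the same.

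One small slip: in your last paragraph you write ``the identification $\tilde\pi^*\E\cong\EE$'', but this is not an isomorphism --- $\tilde\pi^*\E\cong q_W^*\pi^*\E$ surjects onto $\EE$ via the evaluation map $\ev$, and it is precisely this $\ev$ that appears as the vertical arrow before adjunction. Your main argument does not actually use the false isomorphism, so this does not break the proof, but it is worth correcting.
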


\begin{proof}
We use the natural $\C^*$-equivariant\footnote{Here we are using the $\C^*$ action which acts with weight 1 on $\EE$ and $\E$ but fixes $X$ and $S$. Only later will we use the different $\C^*$ action on the fibres of $\pi\colon X\to S$.} map
\beq{rho}
\cN\times_BX[\EE]\rt\rho\cN\times_BS[\E]
\eeq
defined by thinking of both as affine schemes over $\cN\times_BS$ and using the pullback map on (sheaves of $\cO_{\cN\times_BS}$-) algebras:
$$\xymatrix@=55pt{
\big(\cO_{\cN\times_BS}\oplus K_{S/B}^{-1}\oplus K_{S/B}^{-2}\oplus\cdots\big)\oplus\E & \cO_{\cN\times_BS}\oplus\E. \ar[l]_(.31){(1,0,0,\cdots,0,1)}&}
$$
(The algebra structure on the left hand side is defined by the action \eqref{degi} of each $K_{S/B}^{-i}$ on $\E$.) Combining the last two terms of \eqref{LLLL} with pullback via $\rho$ \eqref{rho} gives the commutative diagram
$$\xymatrix@=18pt{
\LL_{\cN\times_BX[\EE]/\cN\times_BX} \ar[r]& q^*\LL_{\cN\times_BX}[1] \\
\rho^*\LL_{\cN\times_BS[\E]/\cN\times_BS} \ar[u]^(.45){\rho^*}\ar[r]& \rho^*q^*\LL_{\cN\times_BS}[1]\ar[u]_(.45){\rho^*}.\!}
$$
Taking weight 1 parts gives
$$\xymatrix@R=18pt@C=35pt{
\EE \ar[r]^(.35){\At_{\EE}}& \EE\otimes\LL_{\cN\times_BX}[1] \\
\pi^*\E \ar[u]^(.45)\ev\ar[r]^(.3){\pi^*\!\At_{\E}}& \pi^*\E\otimes\pi^*\LL_{\cN\times_BS}[1]\ar[u]^(.45){\ev\otimes\!}_(.46){\!\pi^*}.\!}
$$
Projecting to the $\LL_{\cN/B}$ factor in both terms on the right hand side gives the commutative diagram
\beq{11}
\xymatrix@R=18pt@C=50pt{
\EE \ar[r]^(.38){\At_{\EE\!,\,\cN}}& \EE\otimes\LL_{\cN/B}[1] \\
\pi^*\E \ar[u]^(.45)\ev\ar[r]^(.37){\pi^*\!\At_{\E,\,\cN}}& \pi^*\E\otimes\LL_{\cN/B}[1]\ar[u]^(.45){\ev\otimes\!}_(.46){\!1}.\!}
\eeq
Apply the adjunction isomorphism
\begin{eqnarray}
\Hom\big(\pi^*\E,\EE\otimes\LL_{\cN/B}[1]\big)\!\! &\cong& \!\!\Hom\big(\E,\pi_{*\;}\EE\otimes\LL_{\cN/B}[1]\big) \nonumber\\
(\ev\otimes1)\circ\pi^*a\ &\ensuremath{\leftarrow\joinrel\relbar\joinrel\shortmid}& \quad a \label{eur}\\
b\circ\ev\ =\ (\ev\otimes1)\circ(\pi^*\pi_{*\,}b)\ &\Mapsto& \ \ \pi_{*\,}b \nonumber
\end{eqnarray}
to $a=\At_{\E,\,\cN}$ and $b=\At_{\EE\!,\,\cN}$. The commutativity of \eqref{11} says that the two maps on the left hand side of \eqref{eur} agree. Therefore the two maps on the right hand side agree. That is, $a=\pi_{*\;}b$ as required.
\end{proof}

Consider the partial Atiyah class \eqref{par} as lying in the group
\begin{eqnarray}
\At_{\EE\!,\,\cN} &\in& \Ext^1_{\cN\times_BX}\!\big(\EE,\EE\otimes p_X^*\LL_{\cN/B}\big) \nonumber\\
&=& \Ext^1_{\cN\times_BX}\!\big(R\hom(\EE,\EE),p_X^*\LL_{\cN/B}\big) \label{sd}\\
&\cong& \Ext^2_{\cN}\!\big(Rp\_{X*}R\hom(\EE,\EE\otimes K_{X/B}),\LL_{\cN/B}\big), \nonumber
\end{eqnarray}
using relative Serre duality for the projection $p_X\colon\cN\times_BX\to\cN$ of relative dimension 3. Of course the relative canonical bundle $K_{X/B}\cong\cO$ is trivial, but not \emph{equivariantly} with respect to the $\C^*$ action scaling the fibres of $X\to S$. This acts on $K_{X/B}$ with weight $-1$, so
$$
K_{X/B}\ \cong\ \cO\otimes\t^{-1},
$$
where $\t$ is the standard 1-dimensional representation of $\C^*$. All told we get a map
$$
\At_{\EE\!,\,\cN}\colon R\hom_{p\_X}(\EE,\EE)\t^{-1}[2]\To\LL_{\cN/B}\,.
$$
Using $\tau^{\le0}\;\LL_{\cN/B}=\LL_{\cN/B}$, where $\tau$ is the truncation functor, this gives
\beq{att}
\At_{\EE\!,\,\cN}\colon\tau^{\le0}\big(R\hom_{p\_X}(\EE,\EE)\t^{-1}[2]\big)\To\LL_{\cN/B}\,.
\eeq
Here we have removed $\tau^{\ge3}R\hom_{p\_X}(\EE,\EE)$ from $R\hom_{p\_X}(\EE,\EE)$. By Serre duality and the fact that the sheaves $\EE_t$ are simple (by stability), we have the isomorphisms
\beq{ext3}
\xymatrix{
\Ext^3_X(\EE_t,\EE_t) \ar[r]^(.65){\tr}_(.65){\sim}& \C}
\eeq
for all closed points $t\in\cN$. (Serre duality holds despite the noncompactness of $X$ because supp\,$\EE_t$ is proper. Alternatively,  push forward to the projective completion $\overline X=\PP(\cO_S\oplus K_{S/B})$, which leaves the Ext groups unchanged. Then ordinary projective Serre duality on $\overline X$ gives $\Ext^3(\EE_t,\EE_t)=\Hom(\EE_t,\EE_t\otimes K_{\overline X})^*=\Hom(\EE_t,\EE_t)^*=\C$ since the restriction of $K_{\overline X}$ to $X\supset\ $supp\,$\EE_t$ is trivial.) Therefore $\tau^{\ge3}R\hom_{p\_X}(\EE,\EE)\cong R^3p_*\cO$ by basechange and the Nakayama lemma, so the above truncation
\beq{trunk}\xymatrix{
\tau^{\le2}R\hom_{p\_X}(\EE,\EE)\ =\ \mathrm{Cone}\Big(R\hom_{p\_X}(\EE,\EE) \ar[r]^(.75){\tr}& \cO_{\cN}[-3]\Big)[-1]\hspace{-5mm}}
\eeq
is perfect of amplitude $[0,2]$.

\begin{thm} \label{BF}
The map \eqref{att} is a relative obstruction theory for $\cN/B$ in the sense of \cite[Definition 4.4 and Section 7]{BF}.
\end{thm}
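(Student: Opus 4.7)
The plan is to verify the Behrend--Fantechi criterion \cite[Theorem 4.5]{BF}: for every $B$-scheme $T$, every $B$-morphism $g\colon T\to\cN$, and every square-zero extension $T\hookrightarrow\overline T$ with ideal $J$ and classifying class $\omega\in\Ext^1_T(g^*\LL_{\cN/B},J)$, we must show that the image of $\omega$ under the Serre dual of $g^*\!\At_{\EE,\,\cN}$ vanishes precisely when $g$ lifts to $\overline T$, and that the set of such lifts, when nonempty, is a torsor over $\Hom_T\big(g^*\tau^{\le0}(R\hom_{p\_X}(\EE,\EE)\t^{-1}[2]),J\big)$ compatibly with that map.

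First I would use the moduli interpretation to translate the lifting problem: since $\cN$ represents the functor of stable compactly supported torsion sheaves on $X/B$, lifts of $g$ correspond bijectively to $\overline T$-flat extensions of $\EE_T:=(g\times\id_X)^*\EE$ along $T\times_BX\hookrightarrow\overline T\times_BX$. By Illusie \cite[IV.3.1]{Ill}, the obstruction to such an extension is the cup product
$$\At_{\EE_T}\cup p_T^*\omega\ \in\ \Ext^2_{T\times_BX}(\EE_T,\EE_T\otimes p_T^*J),$$
where $p_T\colon T\times_BX\to T$. Because $p_T^*\omega$ lies in the horizontal direction, only the partial Atiyah class along $\LL_{T/B}$ contributes, and by the same functoriality of partial Atiyah classes as in Proposition~\ref{funcAt} this partial class is $g^*\!\At_{\EE,\,\cN}$ composed with the natural morphism $g^*\LL_{\cN/B}\to\LL_{T/B}$.

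Next I would identify this Ext-obstruction with the image of $\omega$ under \eqref{att}. Relative Serre duality for $p_X\colon\cN\times_BX\to\cN$ --- valid despite the noncompactness of $X$ because $\mathrm{supp}\,\EE$ is proper over $\cN$, or equivalently by pushing forward to $\overline X=\PP(\cO_S\oplus K_{S/B})$, on which $K_{\overline X}|_{\mathrm{supp}\,\EE}$ is trivial --- gives
$$\Ext^2_{T\times_BX}(\EE_T,\EE_T\otimes p_T^*J)\ \cong\ \Ext^1_T\!\big(g^*R\hom_{p\_X}(\EE,\EE)\t^{-1}[2],\,J\big).$$
The standard compatibility of Serre duality with cup products turns the Illusie obstruction into the image of $\omega$ under the Serre dual of $g^*\!\At_{\EE,\,\cN}$. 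The truncation $\tau^{\le0}$ in \eqref{att} loses nothing because by \eqref{ext3} the omitted piece of $R\hom_{p\_X}(\EE,\EE)\t^{-1}[2]$ is concentrated in cohomological degree $1$ and therefore admits no nonzero morphism to $\LL_{\cN/B}$, which has amplitude $(-\infty,0]$. This proves the obstruction (or $h^{-1}$-surjectivity) half of the BF criterion.

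The infinitesimal case $T=\Spec\C$, $\overline T=\Spec\C[\epsilon]$, $J=\C$ of the same argument identifies the Zariski tangent space to $\cN$ at $[\EE_t]$ with $\Ext^1_{X_t}(\EE_t,\EE_t)$ via $h^0$ of the map, delivering the required isomorphism on cotangent sheaves and the torsor structure for lifts. The main technical obstacle is the Serre-duality step above: one must track the $\C^*$-equivariant isomorphism $K_{X/B}\cong\cO\otimes\t^{-1}$ carefully enough to confirm that Illusie's Ext-obstruction coincides with the image of $\omega$ under \eqref{att} \emph{on the nose} --- with the correct $\t$-weight and sign --- rather than up to some undetermined twist. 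Once this compatibility is pinned down, the rest of the verification reduces to Proposition~\ref{funcAt} and Illusie's standard machinery.
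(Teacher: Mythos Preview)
Your approach matches the paper's: both verify \cite[Theorem~4.5]{BF} by identifying the composed obstruction class with Illusie's class via the partial Atiyah class cupped with the Kodaira--Spencer class of the extension, then invoke Illusie \cite[III.3.1.5, III.3.1.8]{Ill} for the deformation theory of the sheaf $\EE_T$. One correction: the paper explicitly notes that the $\C^*$-action can be ignored for this argument, so your ``main technical obstacle'' of tracking $\t$-weights is a non-issue --- and the functoriality you need is ordinary pullback-functoriality of Atiyah classes under $g$, not Proposition~\ref{funcAt} (which is about compatibility with $\pi_*$).
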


\begin{proof}
Here we can ignore the $\C^*$ action. Fix a morphism of $B$-schemes,
$$
f\colon T_0\To\cN,
$$
and an extension of $B$-schemes $T_0\subset T$ with ideal $I$ such that $I^2=0$.

Compose the pullback of \eqref{att},
\begin{equation} \label{fobs}
f^*\tau^{\le0}\big(R\hom_{p\_X}(\EE,\EE)[2]\big)\To f^*\LL_{\cN/B\,},
\end{equation}
with the natural map $f^*\LL_{\cN/B}\to\LL_{T_0/B}$ followed by the composition\footnote{This composition $\LL_{T_0/B}\to I[1]$ is the Kodaira-Spencer class of $T_0\subset T$.}
\beq{KS}
\xymatrix@=15pt{
\LL_{T_0/B} \ar[r]& \LL_{T_0/T} \ar[rr]^(.35){\tau^{\ge-1}}&& \tau^{\ge-1}\LL_{T_0/T}\ =\ I[1].}
\eeq
This defines an element
\begin{equation} \label{oext2}
o\ \in\ \Ext^{-1}_{T_0}(f^*\tau^{\le2}R\hom_{p\_X}(\EE,\EE),I).
\end{equation}
By \cite[Theorem 4.5]{BF} our Theorem will follow if we can show that $o$ vanishes if and only
if there exists an extension from $T_0$ to $T$
of the map $f$ of
$B$-schemes, and that when $o=0$ the set of extensions is a torsor
under $\Ext^{-2}_{T_0}(f^*\tau^{\le2}R\hom_{p\_X}(\EE,\EE),I)$. \bigskip

Let
$$
\bar f=\id\times f\colon X\times_B T_0\To X\times_B\cN
$$
and let $\bar p\_X$ be the projection
$$
\bar p\_X\colon X\times_B T_0\To T_0.
$$
Since $\bar p\_X$ is flat and $\tau^{\ge3}R\hom_{p\_X}(\EE,\EE)=R^3p\_{X*}\cO[-3]$ is locally free, basechange gives 
\beq{fr}
f^*\tau^{\le2}R\hom_{p\_X}(\EE,\EE)\ \cong\ \tau^{\le2}R\hom_{\bar p\_X}(\bar f^*\EE,\bar f^*\EE).
\eeq
By the obvious functoriality of Atiyah classes, the pullback \eqref{fobs} of the partial Atiyah class is the partial Atiyah class $\At_{\bar f^*\EE\!,\,T_0}$. Therefore, unwinding the working above and the relative Serre duality of \eqref{sd}, we can write \eqref{oext2}
$$
o\ \in\ \Ext^{-1}_{T_0}(f^*\tau^{\le2}R\hom_{p\_X}(\EE,\EE),I)\ \cong\ \Ext^2_{X\times_{\!B}T_0}\big(\bar f^*\EE,\bar f^*\EE\otimes I\big)
$$
as the composition of
$$
\At_{\bar f^*\EE\!,\,T_0}\ \in\ \Ext^1\big(\bar f^*\EE,\bar f^*\EE\otimes\LL_{T_0/B}\big)
$$
with the Kodaira-Spencer class \eqref{KS}.
But in \cite[Proposition III.3.1.8]{Ill}, Illusie shows that precisely this composition is his class $\omega(B,\EE)$, which in \cite[Proposition III.3.1.5]{Ill} he
shows vanishes if and only if $\bar f^*\EE$ deforms from $T_0\times_B X$ to
$T\times_BX$. Furthermore, when it does vanish, he shows such deformations form a torsor under $\Ext^1_{T_0\times_{\!B}X}(\bar f^*\EE,\bar f^*\EE\otimes I)$, which by
\eqref{fr} and relative Serre duality for $\bar p\_X$ is $\Ext^{-2}_{T_0}(f^*\tau^{\le2}R\hom_{p\_X}(\EE,\EE),I)$.

Since deformations of $\bar f^*\EE$ from $X\times_BT_0$ to $X\times_BT$ are in 1-1 correspondence with extensions from $T_0$ to $T$ of
the $B$-map $f$, we are done.
\end{proof}

\begin{cor} \label{2ter}
There is a perfect relative obstruction theory of amplitude $[-1,0]$ for $\cN$,
\beq{pot}
\tau^{[-1,0]}\big(R\hom_{p\_X}(\EE,\EE)[2]\big)\t^{-1}\To\LL_{\cN/B}\,.
\eeq
\end{cor}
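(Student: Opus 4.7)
From Theorem \ref{BF} we already have the relative obstruction theory $\phi\colon E\udot\to\LL_{\cN/B}$ with $E\udot:=\tau^{\le 0}(R\hom_{p\_X}(\EE,\EE)[2])\t^{-1}$ perfect of amplitude $[-2,0]$. The strategy is to truncate further to $\tau^{[-1,0]}E\udot$, descend $\phi$ across the truncation, and verify both the Behrend--Fantechi conditions and perfectness of the claimed amplitude.

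By Gieseker stability, $\Hom(\EE_t,\EE_t)=\C\cdot\id$ for every closed $t\in\cN$. Basechange and Nakayama's lemma then give
$$
h^{-2}(E\udot)\ =\ p\_{X*}\hom(\EE,\EE)\otimes\t^{-1}\ \cong\ \cO_{\cN}\t^{-1},
$$
canonically generated by $\id_\EE$. Since $E\udot\in D^{[-2,0]}$, this sheaf (placed in degree $-2$) is $\tau^{\le-2}E\udot=\cO_{\cN}\t^{-1}[2]$, yielding the distinguished triangle
$$
\cO_{\cN}\t^{-1}[2]\xrightarrow{\id_\EE}E\udot\To\tau^{[-1,0]}E\udot.
$$

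To descend $\phi$, one checks that the composition $\phi\circ\id_\EE\colon\cO_{\cN}\t^{-1}[2]\to\LL_{\cN/B}$ vanishes. Unwinding the Serre duality \eqref{sd} defining $\phi$, this composition is the trace pairing of $\At_{\EE,\cN}\in\Ext^1(\EE,\EE\otimes p_X^*\LL_{\cN/B})$ with $\id_\EE\in\Hom(\EE,\EE)$, which by the functoriality of Illusie's Atiyah class equals (up to sign) $\At_{\id_\EE}=[\At_\EE,\id_\EE]=0$; concretely, $\id_\EE$ tautologically extends to the identity on any infinitesimal deformation of $\EE$, so its Atiyah obstruction vanishes. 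The induced map $\phi'\colon\tau^{[-1,0]}E\udot\to\LL_{\cN/B}$ then inherits the Behrend--Fantechi conditions from $\phi$, since the truncation $\tau^{\ge-1}$ preserves $h^0$ and $h^{-1}$: $h^0(\phi')$ is an isomorphism and $h^{-1}(\phi')$ is surjective.

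Finally, $\tau^{[-1,0]}E\udot$ is the cone of two perfect complexes and hence perfect, with cohomology supported in $[-1,0]$ by construction; representing $E\udot$ locally by a three-term locally free complex $[F^{-2}\to F^{-1}\to F^0]$, the nowhere-vanishing section $\id_\EE$ of $F^{-2}$ splits off a trivial sub-line-bundle, so that $\tau^{[-1,0]}E\udot$ is locally quasi-isomorphic to a two-term locally free complex in degrees $-1,0$, giving the claimed amplitude. The main obstacle is the vanishing of $\phi\circ\id_\EE$, which relies on carefully tracking the Serre-duality identifications and invoking the functoriality of Illusie's Atiyah class applied to the identity morphism.
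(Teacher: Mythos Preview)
Your overall architecture is right: truncate from $[-2,0]$ to $[-1,0]$, show the map factors through the truncation, and observe that only degree $-2$ changed so the Behrend--Fantechi conditions persist. The perfectness argument and the simplicity step are fine.

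The gap is in the vanishing of $\phi\circ\id_\EE$. Unwinding the Serre duality correctly, this composition is $\tr(\At_{\EE,\cN}\circ\id_\EE)=\tr(\At_{\EE,\cN})$, \emph{not} the commutator $[\At_\EE,\id_\EE]$. The commutator is trivially zero, but that is not what appears: the pairing here is a composition followed by trace, not an alternating bracket. And $\tr(\At_{\EE,\cN})$ is not zero for formal reasons --- it is precisely $\At_{\det\EE,\,\cN}$, the partial Atiyah class of the determinant line bundle. Your heuristic that ``$\id_\EE$ extends to the identity on any deformation'' is true but irrelevant; it would compute the obstruction to deforming the morphism $\id_\EE$, which is a different quantity.

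What the paper does instead is invoke \cite[Proposition 3.2]{STV} to identify the composition with $\det^*\circ\At_{\det\EE,\,\cN}$, where $\det\colon\cN\to\Pic(X/B)$ takes $\EE\mapsto\cO_X(rS)$. This map is constant (the Chern classes are fixed), so the pulled-back Atiyah class vanishes. Alternatively, as the paper notes in a footnote, one can argue that since $\Pic(X/B)\to B$ is smooth, $\LL_{\Pic(X/B)/B}$ lives in degree $0$ only, and any map $\cO_\cN[2]\to\LL_{\Pic(X/B)/B}$ is automatically zero. Either way, the vanishing genuinely uses the determinant, not a bracket identity.
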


\begin{proof} Again, for this proof we can ignore the $\C^*$ action. We have already seen in \eqref{trunk} the complex $\tau^{\le2}R\hom_{p\_X}(\EE,\EE)$ is perfect of amplitude $[0,2]$.
Since stable sheaves are automatically simple we have the isomorphism
\beq{id0}
\xymatrix{
\C \ar[r]^(.25){\id}_(.25){\sim}& \Hom\_X(\EE_t,\EE_t)}
\eeq
for all closed points $t\in\cN$. Therefore, by the standard Nakayama lemma arguments (see for instance \cite[Lemma 4.2]{HT}),
\beq{trun}\xymatrix{
\mathrm{Cone}\Big(\cO_{\cN} \ar[r]^(.37){\id}& \tau^{\le2}R\hom_{p\_X}(\EE,\EE)\Big)}\ =\ \tau^{[1,2]}R\hom_{p\_X}(\EE,\EE)
\eeq
is also perfect, of amplitude [1,2].

By (the dual of) \cite[Proposition 3.2]{STV}\footnote{By \cite[Remark A.1]{STV} their definition of Atiyah class coincides with Illusie's.} we have a commutative diagram
$$
\xymatrix{Rp\_{X*}\cO_{\cN\times_BX}[2] \ar[r]^(.46){\id}\ar[d]_{\At_{\det\EE\!,\,\cN}}& R\hom_{p\_X}(\EE,\EE)[2] \ar[d]^{\At_{\EE\!,\,\cN}} \\
\LL_{\Pic(X/B)/B} \ar[r]^{\det^*}& \LL_{\cN/B\,},\!
}$$
where $\det\colon\cN\to\Pic(X/B)$ takes $\EE$ to its determinant $\cO_X(rS)$. Since this is the constant map,\footnote{We can even do without this fact; since $\Pic(X/B)\to B$ is smooth, $\LL_{\Pic(X/B)/B}$ is concentrated only in degree 0. Therefore the composition \eqref{comp0} is zero in degree $-2$, as required.}
we conclude the composition
\beq{comp0}
\xymatrix@=30pt{\cO_{\cN}[2] \ar[r]^(.33){\id}&
R\hom_{p\_X}(\EE,\EE)[2] \ar[r]^(.65){\At_{\EE\!,\,\cN}}& \LL_{\cN/B},}
\eeq
which factors through $\tau^{\le0}\big(R\hom_{p\_X}(\EE,\EE)[2]\big)$,
is zero. By \eqref{trun} the perfect obstruction theory \eqref{att} therefore factors through a map
$$
\tau^{[-1,0]}\Big(R\hom_{p\_X}(\EE,\EE)[2]\Big)\To\LL_{\cN/B\,}.
$$
Since we have only modified \eqref{att} in degree $-2$, it still induces an isomorphism on cohomology in degree 0 and a surjection in degree $-1$. Thus it is a perfect obstruction theory.
\end{proof}


\section{The $U(r)$ Vafa-Witten invariant}\label{prelim}
From now on we will forget about the base $B$, taking it to be a point for simplicity. It will reappear in Section \ref{SU}, where it will prove deformation invariance of our Vafa-Witten invariants.

\subsection{Localisation} The $\C^*$ action of weight 1 on the fibres of $X=K_S\to S$, corresponding to the grading \eqref{alg} on the sheaf of algebras $\pi_*\cO$, induces a $\C^*$-action on $\cN$, and the obstruction theory \eqref{pot} is naturally $\C^*$-equivariant. By \cite{GP} the $\C^*$-fixed locus $\cN^{\C^*}$ inherits a perfect obstruction theory 
\beq{potf}
\tau^{[-1,0]}\big(R\hom_{p\_X}(\EE,\EE)[2]\t^{-1}\big)^f\To\LL_{\cN^{\C^*}}
\eeq
by taking the fixed (weight 0) part $(\ \cdot\ )^f$ of \eqref{pot}.
This defines a virtual cycle
$$
\big[\cN^{\C^*}\big]^{\vir}\ \in\ H_*\big(\cN^{\C^*}\big).
$$
The derived dual of the moving (nonzero weight) part of \eqref{pot} is called the virtual normal bundle,
\begin{eqnarray} \nonumber
N^{\vir} &:=& \big(\tau^{[-1,0]}\big(R\hom_{p\_X}(\EE,\EE)[2]\t^{-1}\big)^{\mathrm{mov\;}}\big)^\vee\\
&=& \tau^{[0,1]}\big(R\hom_{p\_X}(\EE,\EE)[1]\big)^{\mathrm{mov\;}}. \label{Nvir}
\end{eqnarray}
Here the second expression follows from $\C^*$ equivariant Serre duality on $X$. If $\cN$ were compact, its perfect obstruction theory \eqref{pot} of virtual dimension zero would define a zero dimensional virtual cycle \cite{BF}. Its length would be a deformation invariant integer which we could calculate by the localisation formula \cite{GP} as
\beq{def1}
\int_{[\cN^{\C^*}]^{\vir}}\frac1{e(N^{\vir})}\,.
\eeq
Here we represent $N^{\vir}$ as a 2-term complex $\{V_0\to V_1\}$ of locally free $\C^*$-equivariant sheaves \emph{with nonzero weights} and define
$$
e(N^{\vir})\ :=\ \frac{c_{\mathrm{top}}^{\C^*}(V_0)}{c_{\mathrm{top}}^{\C^*}(V_1)}\ \in\ 
H^*(\cN^{\C^*}\!,\Z) \otimes_{\Z}\Q[t,t^{-1}],
$$
where $t=c_1(\t)$ is the generator of $H^*(B\C^*)=\Z[t]$ and $c_{\mathrm{top}}^{\C^*}$ denotes the $\C^*$-equivariant top Chern class lying in the \emph{localised} $\C^*$-equivariant cohomology
$$
H_{\C^*}^{*}(\cN^{\C^*}\!,\Z)\otimes_{\Z[t]}\Q[t,t^{-1}]\ \cong\ H^*(\cN^{\C^*}) \otimes\Q[t,t^{-1}].
$$
Since $\cN$ is noncompact but $\cN^{\C^*}$ is compact, we instead take the residue integral \eqref{def1} as a \emph{definition}. In this noncompact setting it is only a \emph{rational} number in general.

\begin{pdefn}\label{pdf}
Let $S$ be a smooth projective complex surface.
For $(r,c_1,c_2)$ for which all Gieseker semistable Higgs sheaves are Gieseker stable we define
$$
\UVW_{r,c_1,c_2}(S)\ :=\ \int_{[\cN^{\C^*}]^{\vir}}\frac1{e(N^{\vir})}\ \in\ \Q.
$$
\end{pdefn}

\noindent Note this is a \emph{constant} Laurent polynomial in $\Q[t,t^{-1}]$ because $\cN$ has virtual dimension zero. Equivalently, over each component of $\cN^{\C^*}$, the virtual dimension of the obstruction theory \eqref{potf} equals $-\rk(N^{\vir})$. (Both can vary from component to component, but they jump together.) $\UVW(S)$ is just a local DT invariant of $X$.

\begin{rmk}\label{nogood}\normalfont
This is only a sensible definition for surfaces with $h^{0,1}(S)=0=h^{0,2}(S)$. The obstruction sheaf of the obstruction theory \eqref{potf} has a trivial $H^2(\cO_S)$ summand, so the virtual cycle is zero whenever $h^{0,2}(S)>\nolinebreak0$. And when $h^{0,1}(S)>0$ the invariance of the obstruction theory under tensoring by flat line bundles means the integrand is pulled back from a lower dimensional space $\cN/\Jac(S)$, so the integral vanishes. 
\end{rmk}

For more general surfaces we would like to fix the determinant of $E$ and make $\phi$ trace-free, replacing the groups $\Ext^i_S(E,E),\ \Ext^i_S(E,E\otimes K_S)$ by their trace-free counterparts.\footnote{This is \textbf{not} the same as replacing $\Ext^i_X(\cE,\cE)$ by its trace-free version $\Ext^i_X(\cE,\cE)\_0$. Instead one should use the kernel of the map $\Ext^i_{\overline X}(\cE,\cE)\to H^{1,i+1}(\overline X)$ given by cup product with the Atiyah class on $\overline X$ of the pushforward of $\cE$ followed by the trace map; see for instance \cite[Equation 27]{KT1}. It is simpler to interpret this on $S$.} In gauge theory language, we want to replace $U(r)$ Higgs bundles with $SU(r)$. So instead of working on $X$ with the first term of (\ref{deftri}, \ref{deftri2}) we need to work on $S$ with the other two terms of (\ref{deftri}, \ref{deftri2}). Thus we need to relate the deformations of $\cE$ to those of $E$ and $\phi$. Equivalently, we need to express the deformation theory of $\cN$ in terms of that of $\curly M$ and the fibres of $\Pi\colon\cN\to\curly M$ \eqref{Pi}. This is what we do next.

\section{Deformation theory of the Higgs field}\label{more}
By Proposition \ref{funcAt}, the diagram
$$
\xymatrix{
R\hom_{p\_S}(\E,\E)[1] & R\hom_{p\_X}(\EE,\EE)[1] \ar[l]_{\pi_*} \\
\Pi^*\TT_{\curly M} \ar[u]^{\At_{\E,\cN}}& \TT_{\cN} \ar[l]^{\Pi_*}\ar[u]_{\At_{\EE,\cN}}
}$$
commutes. Dualising and taking cones gives, via the exact triangle \eqref{deftri2},
$$
\xymatrix{
R\hom_{p\_S}(\E,\E\otimes K_S)[1] \ar[r]\ar[d]_{\At_{\E,\cN}}& R\hom_{p\_X}(\EE,\EE)[2] \ar[d]^{\At_{\EE,\cN}} \ar[r]& R\hom_{p\_S}(\E,\E)[2] \ar[d]^{\At_{\EE\!,\,\cN/\curly M}} \\
\Pi^*\LL_{\curly M} \ar[r]^{\Pi^*}& \LL_{\cN} \ar[r]& \LL_{\cN/\curly M\,}.\!}\vspace{-17mm}
$$
\beq{ddgg}\vspace{8mm}\eeq
The right hand vertical arrow --- produced by filling in the cones --- is the projection of $\At_{\EE\!,\,\cN}$ from $\LL_{\cN}$ to $\LL_{\cN/\curly M}$, which factors through the top right hand term in the diagram since it is zero on the top left.

Now any fibre of $\Pi\colon\cN\to\curly M$ --- the space of (stable) Higgs fields $\phi$ for a fixed sheaf $E\in\curly M$ --- can be considered to be a space of quotients $\pi^*E\to\cE\to0$ of $\pi^*E$. 
That is, $\cN/\curly M$ is part\footnote{It is the Zariski open in Quot consisting of all quotients $\pi^*E\to\cE$ which induce an isomorphism $E\to\pi_*\;\cE$ on sections.} of a relative Quot scheme of $\pi^*\E$. Since the kernel of the quotient $\pi^*E\to\cE$ is $\pi^*E\otimes K_S^{-1}$ \eqref{resolution}, its deformations and obstructions are governed by
\beq{gov}
\Hom\_X(\pi^*E\otimes K_S^{-1},\cE)\ \cong\ \Hom\_S(E,E\otimes K_S)
\eeq
and
\beq{ext1ob}
\Ext^1_S(E,E\otimes K_S)
\eeq
respectively.
By Serre duality these are cohomologies of $(R\Hom(E,E)[2])^\vee$, so this is compatible with the last vertical arrow of the above diagram. So we need to check that the arrow is what we expect, i.e. that the diagram induces the usual obstruction theory for the relative Quot scheme $\cN/\curly M$.

\subsection{Deformations of quotients} \label{qdef}
Illusie describes a \emph{reduced} Atiyah class for quotients (or, more generally, maps of modules) and relates it to his Atiyah class of (\ref{LLLL}, \ref{atdef}). In our situation we again consider $\cN\times X[\EE]$ with its projection to $\cN\times X\big/\curly M\times X$. We now also consider $\cN\times X[\pi^*\E]$ with the embedding
$$
\cN\times X[\EE]\ \Into\ \cN\times X[\pi^*\E]
$$
induced from the surjection $\pi^*\E\to\EE\to0$. It has ideal
\beq{idealr}
\pi^*\E\otimes K_S^{-1},
\eeq
the kernel \eqref{resolution} of $\pi^*\E\to\EE$.

These maps of spaces induce a commutative diagram of
exact triangles
\beq{LLL}
\xymatrix@=15pt{
\LL_{\cN\times X[\EE]/\curly M\times X} \ar[r]\ar[d]&
\LL_{\cN\times X[\EE]/\cN\times X} \ar[r]\ar[d]&
\LL_{\cN\times X/\curly M\times X}[1] \ar[d] \\
\LL_{\cN\times X[\EE]/\curly M\times X[\pi^*\E]} \ar[r]&
\LL_{\cN\times X[\EE]/\cN\times X[\pi^*\E]} \ar[r]&
\LL_{\cN\times X[\pi^*\E]/\curly M\times X[\pi^*\E]}[1]}
\hspace{-4mm}
\eeq
on $X\times\cN[\EE]$. As in (\ref{LLLL}, \ref{atdef}), taking the degree 1 part of the pushdown to $\cN\times X$ of the right hand square gives
\beq{reddg}
\xymatrix@=15pt{
\EE \ar[r]\ar[d]& \EE\otimes\LL_{\cN/\curly M}[1] \ar@{=}[d] \\
\pi^*\E\otimes K_S^{-1}[1] \ar[r]& \EE\otimes\LL_{\cN/\curly M}[1],\!}
\eeq
where the bottom left hand term is the ideal \eqref{idealr}.
The lower horizontal arrow defines the reduced Atiyah class\footnote{Also known as the second fundamental form of $\pi^*\E\otimes K_S^{-1}\into\pi^*\E$.} of the quotient $\pi^*\E\to\EE$,
\begin{eqnarray} \label{redat}
\At^{\red}_{\Phi} &\in& \Hom\Big(\pi^*\E\otimes K_S^{-1},
\,\EE\otimes\LL_{\cN/\curly M}\Big) \\ \nonumber
&\cong& \Hom\Big(\E\otimes K_S^{-1},
\E\otimes\LL_{\cN/\curly M}\Big),
\end{eqnarray}
where the isomorphism follows from $\pi_{*\;}\EE\cong\E$.

\begin{prop} \label{CHF}
The right hand arrow of \eqref{ddgg} is $\At^{\red}_{\Phi}$.
\end{prop}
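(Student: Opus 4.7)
The plan is to deduce the proposition from diagram \eqref{reddg}, which already factors the partial Atiyah class $\At_{\EE,\cN/\curly M}\colon \EE\to\EE\otimes\LL_{\cN/\curly M}[1]$ as the composition of the connecting morphism $e\colon\EE\to\pi^*\E\otimes K_S^{-1}[1]$ of the resolution \eqref{resolution} (i.e.\ the extension class \eqref{extc}) with the reduced Atiyah class $\At^{\red}_\Phi$. By construction the right hand vertical arrow of \eqref{ddgg} is the unique map $R\hom_{p\_S}(\E,\E)[2]\to\LL_{\cN/\curly M}$ through which the composition $R\hom_{p\_X}(\EE,\EE)[2]\xrightarrow{\At_{\EE,\cN}}\LL_\cN\to\LL_{\cN/\curly M}$ factors, the factorisation existing because the left hand square of \eqref{ddgg} commutes.

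First I would invoke the relative Serre duality of \eqref{sd}, together with the trivialisation $K_{X/B}\cong\cO\otimes\t^{-1}$, to translate the induced composition $R\hom_{p\_X}(\EE,\EE)[2]\to\LL_{\cN/\curly M}$ into the partial Atiyah class $\At_{\EE,\cN/\curly M}$ viewed via cup product and pushforward.

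Second, and this is the key technical step, I would identify the map $\pi_*\colon R\hom_{p\_X}(\EE,\EE)\to R\hom_{p\_S}(\E,\E)$ of the exact triangle \eqref{deftri2} with the map induced by post-composition with $e$ followed by the adjunction string \eqref{adj}. On a local morphism $f\colon\EE\to\EE$ this yields
$$
\epsilon\circ\pi_*(e\circ f)\ =\ (\epsilon\circ\pi_*e)\circ\pi_*f\ =\ \id_\E\circ\,\pi_*f\ =\ \pi_*f,
$$
using the identity $\epsilon\circ\pi_*(e)=\id_\E$ already established in the proof of Proposition \ref{symmetric}. Thus the two candidate maps agree.

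Combining these two identifications with the factorisation supplied by \eqref{reddg} exhibits
$$
R\hom_{p\_X}(\EE,\EE)[2]\ \xrightarrow{\ \pi_*\ }\ R\hom_{p\_S}(\E,\E)[2]\ \xrightarrow{\At^{\red}_\Phi}\ \LL_{\cN/\curly M}
$$
as the projected Atiyah class $R\hom_{p\_X}(\EE,\EE)[2]\to\LL_{\cN/\curly M}$, and uniqueness of the cone factorisation forces the right hand vertical arrow of \eqref{ddgg} to equal $\At^{\red}_\Phi$. The main obstacle is the Serre-duality/adjunction bookkeeping of the second step; but since the crucial identity $\epsilon\circ\pi_*e=\id_\E$ is already in hand from Proposition \ref{symmetric}, this reduces to an essentially formal verification rather than a new computation.
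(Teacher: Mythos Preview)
Your argument follows the same route as the paper's: both rest on the commutativity of diagram \eqref{reddg}, which records the factorisation $\At_{\EE,\cN/\curly M}=\At^{\red}_\Phi\circ e$. The paper states this in one sentence and leaves the translation into the language of \eqref{ddgg} implicit; you spell out the Serre-duality and adjunction bookkeeping, correctly reusing the identity $\epsilon\circ\pi_*(e)=\id_\E$ from the proof of Proposition~\ref{symmetric} to match post-composition by $e$ (at the level of sheaves on $X$) with the arrow $\pi_*$ of \eqref{deftri2} (at the level of $R\hom_{p_X}$). This extra detail is fine and arguably clarifies what the paper takes for granted.

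There is, however, one genuine misstep. You assert that the right-hand arrow of \eqref{ddgg} is ``the \emph{unique} map through which the projected Atiyah class factors,'' and then conclude by invoking ``uniqueness of the cone factorisation.'' In a triangulated category a map out of the middle term of an exact triangle that vanishes on the first term factors through the third, but not uniquely in general: the ambiguity is a torsor under
\[
\Hom\!\big(R\hom_{p\_S}(\E,\E\otimes K_S)[2],\,\LL_{\cN/\curly M}\big),
\]
and there is no reason for this group to vanish here. The paper makes no uniqueness claim; it simply asserts that $\At^{\red}_\Phi$ \emph{is} a valid filling --- i.e.\ that it makes the right-hand square of \eqref{ddgg} commute --- and that is precisely what your computation establishes. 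So drop the uniqueness statements and conclude directly that $\At^{\red}_\Phi\circ\pi_*$ equals the projected Atiyah class, hence $\At^{\red}_\Phi$ serves as the right-hand arrow. With that correction your proof is complete and essentially identical to the paper's, only more explicit about the duality translation.
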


\begin{proof} 
This is just the statement that \eqref{reddg} commutes, since, by construction, the upper horizontal arrow is the Atiyah class $\At_{\EE\!,\,\cN/\curly M}$ of $\EE$.
\end{proof}

That is, the obstruction theory on $\cN/\curly M$ induced from considering $\cN$ to be a moduli space of sheaves $\cE$ on $X$ (i.e. the right hand arrow of \eqref{ddgg}) is the same as the standard obstruction theory for quotients $\pi^*E\to\cE\to0$ provided by the reduced Atiyah class.

\subsection{Deformations of Higgs fields}
The alternative description of any fibre of $\cN\to\curly M$ as a (linear!) space of Higgs fields $\phi\in\Hom(E,E\otimes K_S)$ means it has tangent space
\beq{tho}
\Hom(E,E\otimes K_S)
\eeq
and similarly an obstruction space
\beq{extob}
\Ext^1(E,E\otimes K_S).
\eeq
Putting these together in a family we will show --- in \eqref{arrow} for instance --- this description induces a perfect relative obstruction theory
\beq{arroh}
R\hom_{p\_S}(\E,\E\otimes K_S)^\vee\To\LL_{\cN/\curly M\;}.
\eeq
Of course (\ref{tho}, \ref{extob}) are the same as the groups (\ref{gov}, \ref{ext1ob}) we got by considering the deformation-obstruction theory of quotients $\pi^*E\to\cE_\phi\to0$. So to prove these are really the same obstruction theories we would like to show the arrow in \eqref{arroh} is $\At^{\red}_{\Phi}$ \eqref{redat}. Proposition \ref{CHF} would then imply that the  perfect obstruction theory given by the right hand arrow of \eqref{ddgg} is the same as the one induced by the linear structure on the fibres of $\cN/\curly M$. This latter description will allow us to pass more easily to the obstruction theory of trace-free Higgs fields (which is harder to describe from the Quot point of view).

\subsection{Matrices} We start by showing the equivalence of the two obstruction theories in the case of Higgs fields over a point, i.e. matrices. (Everything else we need will follow from this case by working in families.) So we fix \emph{vector spaces} $E$  and $\cL$ of ranks $r$ and $1$ respectively, and let $H$ denote the moduli space of Higgs fields $\phi$ on $E$. It is the vector space
$$
H\,:=\ \Hom(E,E\otimes \cL).
$$
By the spectral construction, each $\phi\in H$ is equivalent to a length-$r$ torsion sheaf $\cE_\phi$ on $\cL$ with an exact sequence
$$
0 \To \pi^*(E\otimes \cL^{-1}) \rt{\pi^*\phi-\eta} \pi^*E \rt{\ev} \cE_\phi \To 0.
$$
Here $\pi$ is the projection from $\cL$ to a point, and $\eta$ is the tautological section of $\pi^*\cL$ on $\cL$.

At any point $\phi\in H$ consider the maps
\beq{compo}
\xymatrix@=30pt{
H\,\cong\,\TT_H|_\phi \ar[r]^(.25){\At^{\red}_\phi}& \Hom\big(\pi^*(E\otimes \cL^{-1}),\cE\big)\,\cong\,\Hom(E,E\otimes \cL)\,=\,H,}\hspace{-2mm}
\eeq
where the the first isomorphism is the identification of a linear space with its tangent space. We want to show the composition \eqref{compo} is the identity to deduce that the two descriptions of $H$ --- as a linear space, or as a space of quotients --- give rise to the same description of the tangent space.

\begin{lem}\label{lemme}
The composition \eqref{compo} is the identity.
\end{lem}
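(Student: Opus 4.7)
The plan is to identify the reduced Atiyah class at $\phi$ as the naive first-order derivative of the kernel inclusion in the universal resolution, and then verify via adjunction that the composition in \eqref{compo} is the identity. The key point is that on the linear space $H=\Hom(E,E\otimes\cL)$, the universal Higgs field is literally the identity map $\Phi\colon H\xrightarrow{\ \id\ }H$, so its differential at any $\phi$ is $\id_H$ acting on the tangent space $T_H|_\phi=H$.

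First I would observe that, by its definition as a second fundamental form of the embedding of thickenings $H\times\cL[\EE]\subset H\times\cL[\pi^*E]$ in \eqref{LLL}, the pairing of $\At^{\red}_\Phi$ with a tangent vector $\psi\in T_H|_\phi$ coincides with the following elementary construction. Deform $\phi$ to $\phi+\epsilon\psi$ over $\Spec\C[\epsilon]/(\epsilon^2)$; the kernel inclusion of the resolution \eqref{resolution} becomes
$$
\pi^*(E\otimes\cL^{-1})\otimes\C[\epsilon]\xrightarrow{\ \pi^*\phi-\eta\,+\,\epsilon\,\pi^*\psi\ }\pi^*E\otimes\C[\epsilon],
$$
which differs from the inclusion at $\phi$ precisely by $\epsilon\,\pi^*\psi$. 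Post-composing with the evaluation $\ev\colon\pi^*E\to\cE_\phi$ then yields $\At^{\red}_\phi(\psi)=\ev\circ\pi^*\psi\in\Hom(\pi^*(E\otimes\cL^{-1}),\cE_\phi)$.

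Second, I would apply the adjunction isomorphism $\Hom(\pi^*(E\otimes\cL^{-1}),\cE_\phi)\cong\Hom(E\otimes\cL^{-1},\pi_*\cE_\phi)$. Using $\pi_*\cE_\phi=E$ and $\pi_*(\ev)=\id_E$, the class $\ev\circ\pi^*\psi$ corresponds to $\pi_*(\ev)\circ\psi=\id_E\circ\psi=\psi\in\Hom(E\otimes\cL^{-1},E)$, which under the identification $\Hom(E\otimes\cL^{-1},E)=\Hom(E,E\otimes\cL)=H$ used in \eqref{compo} is exactly the starting $\psi$. So the composition is the identity, as required.

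The main technical obstacle is the first step: confirming that the reduced Atiyah class --- defined abstractly from \eqref{LLL} --- really does produce the expected first-order variation of the kernel inclusion. This I would verify by the same method used in the proof of Theorem \ref{BF}: Illusie's computation \cite[III.3.1.8]{Ill} of the obstruction class $\omega$ in terms of the Atiyah class applies equally to the \emph{reduced} Atiyah class for the quotient $\pi^*E\to\EE$, identifying it with the derivative of the kernel embedding in any first-order deformation. After that step, the rest of the argument is adjunction bookkeeping, and the lemma will feed into the relative statement needed in the rest of \S\ref{more}.
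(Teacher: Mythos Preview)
Your proposal is correct and follows essentially the same approach as the paper: both compute $\At^{\red}_\phi$ paired with a tangent vector $\psi$ by writing down the first-order family $\phi+\epsilon\psi$, reading off the variation of the kernel inclusion in the resolution \eqref{resolution}, and identifying the result with $\psi$ itself. The paper makes the last step explicit via a diagram comparing the varying quotient to the constant one, while you phrase it through adjunction; the content is the same. One minor point: the relevant reference in Illusie for the reduced Atiyah class of a \emph{quotient} is \cite[Section IV.3.2]{Ill} (as the paper uses), not \cite[III.3.1.8]{Ill}, which treats the ordinary Atiyah class of a sheaf.
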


\begin{proof}
Let $R$ denote the dual numbers $\C[t]/(t^2)$. Fix any tangent vector $\dot\varphi\in H\cong T_\phi H$, or equivalently a map $\Spec R\to H$. This corresponds to the family $\phi+t\dot\varphi$ of Higgs fields over $\Spec R$, and so the family of quotients
\beq{vary}
\xymatrix@=16pt{
0 \ar[r]& \pi^*(E\otimes \cL^{-1})\otimes R \ar[rrr]^(.56){\pi^*(\phi+t\dot\varphi)-\eta}&&& \pi^*E\otimes R \ar[r]& \cE_t \ar[r]& 0}
\eeq
defined by the cokernel of the second arrow.

Illusie \cite[Section IV.3.2]{Ill} shows any such family (flat over $R$) defines a deformation class in
\beq{class}
\Hom\big(\pi^*(E\otimes \cL^{-1}),\,\cE\otimes(t)\big)\ \cong
\ \Hom\big(\pi^*(E\otimes \cL^{-1}),\,\cE\big)
\eeq
described as follows. We compare the constant quotient (the right hand horizontal arrow $\ev\otimes1_R$ in the diagram below) with the varying quotient \eqref{vary} (the left hand horizontal arrow):
$$
\xymatrix@R=16pt@C=35pt{
\pi^*(E\otimes \cL^{-1})\cdot t \ar[d]\ar[rrrd]^0 &&& \cE_\phi\cdot t \ar[d] \\
\pi^*(E\otimes \cL^{-1})\otimes R \ar[rr]^(.57){\pi^*(\phi+t\dot\varphi)-\eta}\ar[d]&&
\pi^*E\otimes R \ar[r]& \cE_\phi\otimes R \ar[d] \\
\pi^*(E\otimes \cL^{-1}) \ar@{-->}[rrru]&&& \cE_\phi}
$$
Since $t^2=0$ the upper diagonal arrow is zero, so the horizontal composition induces the lower diagonal arrow. This has composition zero with the projection to $\cE_\phi$ so lifts uniquely to a map to $\cE_\phi\cdot t$.

Splitting $\cE_\phi\otimes R=\cE_\phi\cdot t\oplus\cE_\phi$ and $\pi^*E\otimes R=\pi^*E\cdot t\oplus\pi^*E$, we see from the diagram that this map is $\dot\varphi$. Since Illusie also shows the class is $\At^{\red}_\phi\cdot\,\dot\varphi$, this proves the Lemma.
\end{proof}

From this universal case, we can deduce the same result for any family of Higgs fields $\phi$ on the vector space $E$, parametrised by any (possibly singular) base $\calH$. Let $f\colon\calH\to H$ be its classifying map and let $\Phi$ denote the universal Higgs field.

\begin{cor} \label{core}
The map $$\At^{\red}_{f^*\Phi}\colon\TT_{\calH}\To f^*\big(R\Hom(E,E\otimes \cL)\otimes\cO_H\big)\,=\ H\otimes\cO_{\calH}$$ is the same as the derivative $$Df\colon\TT_{\calH}\To f^*\TT_H=H\otimes\cO_{\calH}.$$
\end{cor}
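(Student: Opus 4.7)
The strategy is to deduce the corollary from Lemma \ref{lemme} via the naturality of the reduced Atiyah class under the classifying map $f\colon\calH\to H$.

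First I would upgrade the pointwise statement of Lemma \ref{lemme} to a global one: I claim that for the \emph{universal} Higgs field $\Phi$ on $H\times\cL$, the reduced Atiyah class
$$
\At^{\red}_{\Phi}\colon\TT_H\To H\otimes\cO_H
$$
is equal to the identity as a morphism of sheaves. Indeed both source and target are the trivial rank-$r^2$ bundle on the (smooth, affine) vector space $H$ --- since $\TT_H=H\otimes\cO_H$ canonically --- and Lemma \ref{lemme} shows that the map restricts to the identity on every closed fibre. A morphism of trivial locally free sheaves which is the identity on every fibre is the identity globally: its matrix entries are regular functions on $H$ whose values at every point are $0$ or $1$, hence constants.

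Second, I would invoke the naturality of the reduced Atiyah class under pullback. Since $f^*\Phi=(f\times\id)^*\Phi$, Illusie's construction \eqref{LLL}--\eqref{reddg} applied to the two families fits into a commutative diagram related by the cotangent functoriality map $f^*\LL_H\to\LL_\calH$. After dualising and passing through the identifications of \eqref{redat}, this produces the factorisation
$$
\At^{\red}_{f^*\Phi}\ =\ f^*\!\big(\At^{\red}_\Phi\big)\circ Df
\qquad\colon\TT_\calH\To H\otimes\cO_\calH,
$$
where the middle term is $f^*\TT_H=H\otimes\cO_\calH$. Combining this with step one gives $\At^{\red}_{f^*\Phi}=\id\circ Df=Df$, as required.

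The main obstacle is the naturality statement in the second step. Although it is formal from Illusie's setup, it requires checking that the cube of cotangent complexes built from \eqref{LLL} and the map $f\times\id$ commutes in the appropriate sense, and that extracting the reduced Atiyah class commutes with passing to the pullback. This is made easier by the fact that $H$ is a vector space, so $\LL_H$ is locally free and its formation commutes with pullback, letting one cleanly separate the ``universal'' factor $\At^{\red}_\Phi$ from the ``derivative'' factor $Df$ in the composite.
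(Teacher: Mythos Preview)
Your proposal is correct and takes essentially the same approach as the paper: first use Lemma \ref{lemme} on closed points to deduce $\At^{\red}_\Phi=\id$ for the universal family (since an endomorphism of a vector bundle on a smooth space that is the identity fibrewise is the identity), then invoke functoriality of the reduced Atiyah class under pullback to conclude. Your factorisation $\At^{\red}_{f^*\Phi}=f^*(\At^{\red}_\Phi)\circ Df$ is simply a more explicit unwinding of what the paper writes as ``$f^*\At^{\red}_\Phi=\At^{\red}_{f^*\Phi}$ \dots therefore gives $\At^{\red}_{f^*\Phi}=f^*\id=Df$''; the paper is silently using the dual/cotangent formulation of the Atiyah class, for which the naturality is literal pullback, and then translating back.
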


\begin{proof} We first observe this is true for the universal family $H$ itself (with $f,\ Df$ the identity). Since $H$ is smooth its tangent complex $\TT_{H}$ is a vector bundle, and to show an endomorphism of a vector bundle on a smooth space is the identity it is sufficient to check it on restriction to any point. Therefore Lemma \ref{lemme} proves the claim.

That is, $\At^{\red}_\Phi=\id$.
The obvious functoriality $f^*\At^{\red}_\Phi=\At^{\red}_{f^*\Phi}$ therefore gives $\At^{\red}_{f^*\Phi}=f^*\id=Df$, as required.
\end{proof}

\subsection{Higgs bundles} We can now deduce the same result in a \emph{family} of vector spaces, i.e. when $E$ is a vector bundle.

Suppose that $\curly M$ parametrises only \emph{vector bundles} on $S$ (or shrink it so that it does). Let $X$ be the total space of $K_S$ as usual.
From the universal bundle $\E$ on $\curly M\times S$ we form the vector bundle
$$
\sH\,:=\ \hom(\E,\E\otimes K_S)\rt\rho\curly M\times S.
$$
Over $\sH\times_{\curly M\times S}\curly M\times X\rt\pi\sH$ we get a universal Higgs field $\Phi$ and, by the spectral construction, a universal quotient
$$
0\To\pi^*(\rho^*\E\otimes K_S^{-1})\To\pi^*(\rho^*\E)\To\EE\To0.
$$
Using the smooth linear structure on the fibres of $\sH\to\curly M\times S$ gives the first isomorphism in the sequence of maps
\begin{multline} \label{compid}
\hspace{2mm}\xymatrix{
\rho^*\sH\ \cong\ \TT_{\sH/\curly M\times S}\ \ar[r]^(.4){\At^{\red}_\Phi}& 
\ \pi_*\hom(\pi^*(\rho^*\E\otimes K_S^{-1}),\,\EE)} \\
\cong\ \hom(\rho^*\E,\rho^*\E\otimes K_S)\ \cong\ \rho^*\sH.
\hspace{14mm}
\end{multline}

\begin{lem} \label{compl} 
The composition \eqref{compid} is the identity.
\end{lem}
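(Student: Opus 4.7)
The plan is to reduce the family version to the pointwise version already established in Corollary \ref{core}. Since $\sH \to \curly M \times S$ is a vector bundle, the relative tangent $\TT_{\sH/\curly M\times S}$ is locally free, and the composition in \eqref{compid} is an endomorphism of a vector bundle on $\sH$. To show it is the identity, it therefore suffices to check this fibrewise at every closed point $x \in \sH$.

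Fix a closed point $x \in \sH$ lying over $(E_0,s_0) \in \curly M \times S$. The fibre $\sH_{(E_0,s_0)}$ is the linear space $H_0 = \Hom(E_{0,s_0}, E_{0,s_0}\otimes(K_S)_{s_0})$ of pointwise Higgs fields on the vector space $E_{0,s_0}$ with values in the line $(K_S)_{s_0}$, and restriction of the universal $\Phi$ and of the universal quotient $\pi^*(\rho^*\E\otimes K_S^{-1}) \to \pi^*(\rho^*\E) \to \EE$ to this fibre gives exactly the universal family studied in Lemma \ref{lemme}, with classifying map $f = \id \colon H_0 \to H_0$. By the functoriality of Illusie's reduced Atiyah class with respect to the flat basechange $i \colon H_0 \hookrightarrow \sH$, one has $i^*\At^{\red}_\Phi = \At^{\red}_{i^*\Phi}$, so the restriction of \eqref{compid} to $H_0$ is precisely the composition \eqref{compo} for the universal family over $H_0$. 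Corollary \ref{core} (or equivalently Lemma \ref{lemme} applied directly) identifies this with the derivative of $\id \colon H_0 \to H_0$, which is the identity on $H_0 \cong \TT_{H_0}$.

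Since this holds at every closed point, the endomorphism of $\TT_{\sH/\curly M\times S}$ given by \eqref{compid} is the identity, as required. The only subtle point is compatibility of the three identifications in \eqref{compid} with restriction to the fibre $H_0$: the first is tautological from the vector bundle structure on $\sH \to \curly M \times S$, and the third is the composition of the adjunction $\pi_*\hom(\pi^*(\ \cdot\ ),\EE) \cong \hom(\ \cdot\ ,\pi_*\EE)$ with the Higgs identification $\pi_*\EE \cong \rho^*\E$ from \eqref{higgs3}, both of which are natural in the family and therefore pull back to their analogues on $H_0$. This is the step that requires the most care, but it is essentially bookkeeping once one unwinds the definitions.
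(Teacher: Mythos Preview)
Your argument has a genuine gap. You assert that since \eqref{compid} is an endomorphism of a locally free sheaf on $\sH$, it suffices to check it is the identity at every closed point. This inference is only valid when the underlying scheme is \emph{reduced}: on a non-reduced scheme a section of a locally free sheaf can vanish at every closed point without being zero (think of $\epsilon\cdot\id$ on $\Spec\C[\epsilon]/(\epsilon^2)$). Here $\sH$ sits over the moduli stack $\curly M$ of all coherent sheaves on $S$, which is in general highly non-reduced, so the pointwise check does not finish the proof.

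The paper's own proof makes exactly this distinction. It first notes, as you do, that Lemma~\ref{lemme} handles every closed point, and that this suffices \emph{if $\curly M$ is reduced}. For the general case it argues differently: since the statement is about a map of sheaves (not complexes), it is local, so one may shrink $\curly M$ and $S$ until both $\E$ and $K_S$ are trivial. Then $\sH\cong H\times\curly M\times S$ globally, and Corollary~\ref{core} applied to the projection $f\colon\sH\to H$ (rather than to inclusions of fibres) gives $\At^{\red}_{f^*\Phi}=Df$, which is the identity in the fibre direction --- and now this is an equality of actual morphisms, not just of their values at closed points. Your restriction-to-fibres manoeuvre can be salvaged by this local trivialisation step, but as written it is incomplete.
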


\begin{proof}
Lemma \ref{lemme} shows that restricted to any closed point of $\sH$ the composition is the identity. If $\curly M$ is reduced this proves the claim.

In general it is sufficient to check the claim locally, since maps of sheaves (rather than complexes of sheaves) are equal if and only if they are equal locally. So, shrinking $\curly M$ and $S$ if necessary, we can assume that both $\E$ and $K_S$ are trivial. Then applying Corollary \ref{core} to $f$ the projection $\sH\cong H\times\curly M\times S\to H$ proves that \eqref{compid} is the identity.
\end{proof}

Thinking of $\cN/\curly M$ as a moduli space of sections of $\sH\to\curly M\times S$, the graph of the universal section $\Phi$ gives an embedding
\beq{aitch}
\xymatrix{
\cN\times S\ \ar@{^(->}[r]^(.53)\Phi& \,\Pi^*\sH,}
\eeq
where $\Pi\colon\cN\times S\to\curly M\times S$ is the projection. Its normal bundle is the fibrewise tangent bundle of $\Pi^*\sH\to\cN\times S$, which by the linear structure is just $\Pi^*\sH$:
\beq{just}
N_\Phi\ =\ \Cone\Big(\TT_{\cN\times S}\rt{D\Phi}\Phi^*\TT_{\Pi^*\sH}\Big)\ \cong\ \Phi^*\TT_{\Pi^*\sH/\cN\times S}\ \cong\ \Pi^*\sH.
\eeq
Therefore, by \cite[Proposition 6.2]{BF} applied to $(\curly M\times S)$-maps (i.e. sections) from $\curly M\times S$ to $\sH$ we get the linear relative obstruction theory for $\cN$ promised in \eqref{arroh} by starting with the map
\beq{arra}
\xymatrix@=30pt{
p_S^*\TT_{\cN/\curly M}\ \cong\ \TT_{\cN\times S/\curly M\times S} \ar[r]^-{D\Phi}& \Phi^*\TT_{\Pi^*\sH/\cN\times S}\ =\ N_\Phi\ \cong\ \Pi^*\sH}
\eeq
and applying adjunction:
\beq{arrow}
\TT_{\cN/\curly M}\,\To\,Rp\_{S*}(N_\Phi)\ \cong\ Rp\_{S*}\big(\Pi^*\sH\big)\ \cong\ R\hom_{p\_S}(\E,\E\otimes K_S).
\eeq

\begin{prop} \label{ni} The relative obstruction theory \eqref{arrow} is the same as the one given by the right hand arrow of \eqref{ddgg}.
\end{prop}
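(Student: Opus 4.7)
The plan is to show both obstruction theories coincide by reducing to Lemma~\ref{compl} via functoriality of the reduced Atiyah class.

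First I would identify precisely the two arrows being compared. By Proposition~\ref{CHF}, the right-hand arrow of \eqref{ddgg} is the reduced Atiyah class $\At^{\red}_\Phi$, viewed (after adjunction along $\pi$ and Serre duality) as a map $\TT_{\cN/\curly M}\to R\hom_{p\_S}(\E,\E\otimes K_S)$. The map \eqref{arrow} is, by construction, obtained by $Rp\_{S*}$ and adjunction from the derivative $D\Phi$ of \eqref{arra}, under the identification $N_\Phi\cong\Pi^*\sH$ of \eqref{just}. So the task is to show that, as maps from $\TT_{\cN\times S/\curly M\times S}$ to $\Pi^*\sH$ on $\cN\times S$, the (appropriately adjointed) reduced Atiyah class equals $D\Phi$.

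Next, I would exploit functoriality. The universal Higgs field $\Phi$ on $\cN\times S$ is, by the very definition of the embedding \eqref{aitch}, the pullback via $\Phi$ of the universal Higgs field on $\Pi^*\sH$, which in turn is pulled back from the universal Higgs field $\Phi^{\mathrm{univ}}$ on $\sH$ over $\curly M\times S$. The same is therefore true of the associated universal quotient $\pi^*\E\otimes K_S^{-1}\to\pi^*\E\to\EE$ on $\cN\times X$: it is the pullback (via $\Phi\times_{\curly M\times S}\id$) of the analogous universal quotient on $\sH\times_{\curly M\times S}(\curly M\times X)$. Since Illusie's reduced Atiyah class is natural under pullback of quotients (this is precisely the mechanism used to upgrade Lemma~\ref{lemme} to Corollary~\ref{core}), we obtain
$$
\At^{\red}_\Phi\ =\ \Phi^*\At^{\red}_{\Phi^{\mathrm{univ}}}.
$$

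Finally, I would plug in Lemma~\ref{compl}, which identifies $\At^{\red}_{\Phi^{\mathrm{univ}}}$ with the canonical identity $\TT_{\sH/\curly M\times S}\cong\rho^*\sH$. Pulling back along the section $\Phi\colon\cN\times S\into\Pi^*\sH$ turns this identity into $D\Phi\colon\TT_{\cN\times S/\curly M\times S}\to\Phi^*\TT_{\Pi^*\sH/\cN\times S}\cong\Pi^*\sH$, matching \eqref{arra} on the nose. Applying $Rp\_{S*}$ and the adjunction $Rp\_{S*}\Pi^*\sH\cong R\hom_{p\_S}(\E,\E\otimes K_S)$ then identifies the two relative obstruction theories.

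The main technical point to get right is the functoriality step: one must carefully check that the universal quotient on $\cN\times X$ literally agrees with the pullback (via the classifying section) of the universal quotient on $\sH\times_{\curly M\times S}(\curly M\times X)$, so that Illusie's functorial construction of $\At^{\red}$ from \eqref{LLL}--\eqref{reddg} commutes with the classifying map. Once this compatibility is in place, Lemma~\ref{compl} does all the work and the matching of the two obstruction theories is automatic.
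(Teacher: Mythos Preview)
Your proposal is correct and follows essentially the same route as the paper's proof: identify the right-hand arrow of \eqref{ddgg} with $\At^{\red}_\Phi$ via Proposition~\ref{CHF}, use functoriality of the reduced Atiyah class under pullback by the classifying section $\Phi$, and then invoke Lemma~\ref{compl} to see that the universal reduced Atiyah class is the identity, forcing $\At^{\red}_\Phi$ to equal $D\Phi$. The paper's argument is more compressed (it simply says $\At^{\red}_\Phi$ is the composite of $D\Phi$ with the $\Phi^*\Pi^*$-pullback of \eqref{compid}, which is the identity by Lemma~\ref{compl}), whereas you spell out the functoriality step more carefully---but the content is the same.
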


\begin{proof}
By Proposition \ref{CHF} the right hand arrow of \eqref{ddgg} is given by the reduced Atiyah class $\At^{\red}_\Phi$ of the universal quotient $\pi^*\E\to\EE_\Phi$ on $\cN\times X$. It is induced from the composition of the arrow \eqref{arra} with 
the pullback by $\Phi^*\Pi^*$ of \eqref{compid}. By Lemma \ref{compl} the latter is the identity, so we are done.
\end{proof}

\subsection{Trace}\label{tracesec}
Taking the trace of the section $\Phi$ \eqref{aitch} of $\sH$ we get a section $\tr\Phi$ of $K_S$ instead:
\beq{trsec}
\xymatrix{
\cN\times S\ \ar@{^(->}[r]^(.55)\Phi& \,\Pi^*\sH \ar[r]^(.4)\tr& \cN\times K_S.}
\eeq
Replacing the above analysis (\ref{just}, \ref{arra}, \ref{arrow}) for the section \eqref{aitch}  (the graph of $\Phi$) by the (simpler) analysis for the section \eqref{trsec} (the graph of $\tr\Phi$) gives a relative obstruction theory
$$
\TT_{\curly M\times H^0(K_S)/\curly M}\,\To\,Rp\_{S*}K_S
$$
for the moduli space $\curly M\times \Gamma(K_S)\to\curly M$ of pairs $(E,\sigma),\ \sigma\in \Gamma(K_S)$, together with a commutative diagram
\beq{Hitch}
\xymatrix{
R\hom_{p\_S}(\E,\E\otimes K_S) \ar[r]^-\tr& Rp\_{S*}K_S \\
\TT_{\cN/\curly M} \ar[r]^-{D(\tr\Phi)}\ar[u]^{\At^{\red}_\Phi}& 
(\tr\Phi)^*\TT_{\curly M\times \Gamma(K_S)/\curly M} \ar[u]} 
\eeq
compatible with the map
\begin{eqnarray}\label{tracemap}
\cN &\To& \curly M\times \Gamma(K_S), \\ \nonumber
(E,\phi) &\Mapsto& (E,\tr\phi).
\end{eqnarray}
Moreover, identifying $\TT_{\curly M\times\Gamma(K_S)/\curly M}$ with $\Gamma(K_S)\otimes\cO$ using the obvious linear structure, the construction of the right hand arrow of \eqref{Hitch} (i.e. the analogue of \eqref{just} for the section $\tr\Phi$) shows that it is the canonical embedding
$$
\Gamma(K_S)\otimes\cO\rt{H^0}R\Gamma(K_S)\otimes\cO\ \cong\ Rp\_{S*}K_S.
$$
Taking co-cones in \eqref{Hitch} gives the commutative diagram of exact triangles
$$
\hspace{7mm} \xymatrix@R=18pt@C=1pt{
R\hom_{p\_S}(\E,\E\otimes K_S)\_0 \ar[rrr]&&& R\hom_{p\_S}(\E,\E\otimes K_S) \ar[r]^-\tr& Rp\_{S*}K_S \\
\TT_{\cN/\curly M\times \Gamma(K_S)} \ar[u]^{\At^{\red}_0}\ar[rrr]&&& \TT_{\cN/\curly M} \ar[r]^-{\tr\Phi}\ar[u]^{\At^{\red}_\Phi}& 
(\tr\Phi)^*\TT_{\curly M\times \Gamma(K_S)/\curly M} \ar[u]}
\vspace{-15mm}$$
\beq{pech} \vspace{6mm} \eeq
where $\At^{\red}_0$ is the trace-free component of $\At^{\red}_\Phi$ in the splitting of the top row. The (dual of the) left hand map defines an obstruction theory by the following standard Lemma.

\begin{lem}\label{ABCL}
Given maps of spaces $A\to B\to C$ and a map of distinguished triangles on $A$,
$$
\xymatrix@R=18pt{
E\udot \ar[d]& F\udot \ar[l]\ar[d]& G\udot \ar[l]\ar[d] \\
\LL_{A/B} & \LL_{A/C} \ar[l]& \LL_{B/C}, \ar[l]}
$$
if the central and right hand vertical arrows are relative obstruction theories, then so is the left hand vertical arrow.
\end{lem}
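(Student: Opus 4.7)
\medskip

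\noindent\textbf{Plan.} The strategy is to verify the two defining properties of a relative obstruction theory (Behrend--Fantechi) for the left vertical $E\udot\to\LL_{A/B}$---namely that it induces an isomorphism on $h^0$ and a surjection on $h^{-1}$---by the classical $5$-lemma applied to the ladder of long exact cohomology sequences of the two horizontal distinguished triangles.

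First I would extract from the two triangles the commutative ladder of six-term exact cohomology sequences on $A$,
\[
h^{-1}(G\udot)\to h^{-1}(F\udot)\to h^{-1}(E\udot)\to h^0(G\udot)\to h^0(F\udot)\to h^0(E\udot)\to 0
\]
on top (using that the obstruction complexes live in degrees $\le 0$, as is built into the Behrend--Fantechi definition), and
\[
h^{-1}(\LL_{B/C})\to h^{-1}(\LL_{A/C})\to h^{-1}(\LL_{A/B})\to h^0(\LL_{B/C})\to h^0(\LL_{A/C})\to h^0(\LL_{A/B})\to 0
\]
on the bottom (using that cotangent complexes are also concentrated in non-positive degrees). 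By hypothesis the vertical arrows at $h^0(G\udot)$ and $h^0(F\udot)$ are isomorphisms, while the verticals at $h^{-1}(G\udot)$ and $h^{-1}(F\udot)$ are surjective.

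Next I would invoke the iso version of the $5$-lemma on the rightmost five columns (the terminal $\to 0$ supplies the missing ``iso'' on the right) to conclude that $h^0(E\udot)\to h^0(\LL_{A/B})$ is an isomorphism. Shifting one column to the left and invoking the surjective version of the $5$-lemma (isos imply injective/surjective, together with the two $h^{-1}$ surjections) then yields surjectivity of $h^{-1}(E\udot)\to h^{-1}(\LL_{A/B})$. These are exactly the two conditions for the left vertical to be a relative obstruction theory.

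There is no substantive obstacle: the lemma is almost purely formal. The only thing to keep track of is the commutativity of the connecting homomorphisms on both rows with the given vertical morphisms, which is automatic from the fact that we are given a genuine morphism of distinguished triangles. The one structural input beyond formal diagram chasing is that the truncation at degree $1$ vanishes for both rows, which is why the non-positive amplitude convention is needed.
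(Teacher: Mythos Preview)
Your proposal is correct and matches the paper's proof essentially verbatim: the paper writes out the same ladder of long exact cohomology sequences, marks the known isomorphisms on $h^0$ and surjections on $h^{-1}$ for the central and right columns, and then says ``by a simple diagram chase'' the left column has the required properties. Your only addition is to name the diagram chase as the $5$-lemma and to spell out why both rows terminate in $0$, which the paper leaves implicit.
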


\begin{proof} Taking sheaf cohomology we get the maps of long exact sequences
$$
\xymatrix@=15pt{
0& h^0(E\udot) \ar[l]\ar[d]& h^0(F\udot) \ar[l]\ar@{=}[d]& h^0(G\udot) \ar[l]\ar@{=}[d]& h^{-1}(E\udot) \ar[l]\ar[d]& h^{-1}(F\udot) \ar[l]\ar@{->>}[d]& h^{-1}(G\udot) \ar[l]\ar@{->>}[d] \\
0& \Omega_{A/B} \ar[l]& \Omega_{A/C} \ar[l]& \Omega_{B/C} \ar[l]& \LL^{-1}_{A/B} \ar[l]& \LL^{-1}_{A/C} \ar[l]& \LL^{-1}_{B/C}\,, \ar[l]}
$$
where $\LL^{-1}:=h^{-1}(\LL)$. The obstruction theories give isomorphisms on $h^0$ and surjections on $h^{-1}$ as shown by the arrows marked $\xymatrix@1@C=16pt{\ar@{=}[r]&}$ and $\xymatrix@1@C=16pt{\ar@{->>}[r]&}$ respectively. By a simple diagram chase, the first vertical arrow is an isomorphism and the fourth is a surjection.
\end{proof}

Fitting \eqref{pech} in with \eqref{ddgg} we get the diagram of exact triangles
$$
\hspace{7mm} \xymatrix@R=18pt@C=22pt{
R\hom_{p\_S}(\E,\E\otimes K_S)\_0 \ar@{<->}[r]\ar[d]&
R\hom_{p\_S}(\E,\E\otimes K_S) \ar@{<->}[r]^-\tr_-\id\ar[d]& R\Gamma(K_S) \ar@{=}[d] \\
R\hom_{p\_X}(\EE,\EE)^0[1] \ar@{<->}[r]\ar[d]&
R\hom_{p\_X}(\EE,\EE)[1] \ar@{<->}[r]\ar[d]& R\Gamma(K_S)\\
R\hom_{p\_S}(\E,\E)[1] \ar@{=}[r]& R\hom_{p\_S}(\E,\E)[1]}
\vspace{-23mm}$$
\beq{doc} \vspace{15mm} \eeq
(where $R\hom_{p\_X}(\EE,\EE)^0$ is defined by taking cones in the central row) receiving Atiyah class maps from the diagram
$$
\xymatrix@=18pt{\TT_{\cN/\curly M\times\Gamma(K_S)}
\ar[r]\ar[d]& \TT_{\cN/\curly M} \ar[r]\ar[d]&
\TT_{\Gamma(K_S)} \ar@{=}[d] \\
\TT_{\cN/\Gamma(K_S)} \ar[r]\ar[d]&
\TT_{\cN} \ar[r]\ar[d]& \TT_{\Gamma(K_S)} \\
\TT_{\curly M} \ar@{=}[r]& \TT_{\curly M}}
\vspace{-21mm}$$
\beq{doc2} \vspace{13mm} \eeq
with everything commutative. Along the top rows the maps are obstruction theories, as we just showed following \eqref{pech}. In the middle row the maps on the central and right hand terms are obstruction theories by Theorem \ref{BF} and Proposition \ref{ni} respectively. Therefore the map on the first terms of the middle rows
\beq{nite}
\TT_{\cN/\Gamma(K_S)}\To R\hom_{p\_X}(\EE,\EE)^0[1]
\eeq
is also an obstruction theory, by Lemma \ref{ABCL} again.

\subsection{Determinant}\label{detsec}
Having dealt with the trace of Higgs fields, we now also deal with the determinant of Higgs bundles on $S$.
This is more standard. We start with the left hand column of \eqref{doc}, making it the central column of the following commutative diagram of exact triangles
$$
\hspace{8mm}\xymatrix@=18pt{
R\hom_{p\_S}(\E,\E\otimes K_S)\_0 \ar@{=}[r]\ar[d]&
R\hom_{p\_S}(\E,\E\otimes K_S)\_0 \ar[d] \\
R\hom_{p\_X}(\EE,\EE)\_\perp[1] \ar@{<->}[r]\ar[d]&
R\hom_{p\_X}(\EE,\EE)^0[1] \ar@{<->}[r]\ar[d]& R\Gamma(\cO_S)[1] \ar@{=}[d] \\
R\hom_{p\_S}(\E,\E)\_0[1] \ar@{<->}[r]& R\hom_{p\_S}(\E,\E)[1] \ar@{<->}[r]& R\Gamma(\cO_S)[1].\!}
\vspace{-24mm}$$
\beq{doc3} \vspace{15mm} \eeq
Here the central row is defined in Corollary \ref{symme}. We then get the left hand column by taking co-cones. We claim this diagram is mapped to commutatively by the diagram of exact triangles\footnote{Here, as before, $\det\colon\cN\to\Pic(S)$ is the map which on points takes $(E,\phi)\mapsto\det E$.}
$$
\xymatrix@=18pt{
\TT_{\cN/\curly M\times\Gamma(K_S)} \ar@{=}[r]\ar[d]&
\TT_{\cN/\curly M\times\Gamma(K_S)} \ar[d] \\
\TT_{\cN/\Gamma(K_S)\times\Pic(S)} \ar[r]\ar[d]&
\TT_{\cN/\Gamma(K_S)} \ar[r]^{\det_*}\ar[d]& \TT_{\Pic(S)} \ar@{=}[d] \\
\TT_{\curly M/\Pic(S)} \ar[r]& \TT_{\curly M} \ar[r]^{\det_*}& \TT_{\Pic(S)}.\!}
\vspace{-23mm}$$
\beq{doc4} \vspace{15mm} \eeq
The maps between central columns come from the Atiyah class maps between \eqref{doc2} and \eqref{doc}. Between the right hand columns we similarly take the Atiyah class of the line bundle $\det\EE$. These give a commutative square by (the dual of) \cite[Proposition 3.2]{STV}. Taking co-cones gives the maps on the left hand column.

The splitting of the central row of \eqref{doc3} defines a splitting of the Atiyah class $\At_{\EE\!,\,\cN}$ into components $\big(\!\At_{\EE\!,\,\cN}^\perp\,,\,\At_{\det\EE\!,\,\cN}\big)$.

\begin{prop} \label{close}
Over the open locus of Higgs \emph{bundles} in $\cN$, the map
$$
\At_{\EE\!,\,\cN}^\perp\colon R\hom_{p\_X}(\EE,\EE)\_\perp[2]\t^{-1}\To\LL_{\cN/\Gamma(K_S)\times\Pic(S)}
$$
defined above is a 2-term symmetric perfect relative obstruction theory for $\cN\big/\Gamma(K_S)\times\Pic(S)$.
\end{prop}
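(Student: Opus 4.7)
The plan is to apply the dual, tangent form of Lemma~\ref{ABCL} to the factorisation $\cN\to\Gamma(K_S)\times\Pic(S)\to\Gamma(K_S)$, exploiting diagrams \eqref{doc3} and \eqref{doc4} which already provide the requisite map of exact triangles on $\cN$. The centre column of those diagrams is the relative obstruction theory \eqref{nite} established just after diagram \eqref{pech}. The right column is the partial Atiyah class of the universal line bundle, $\At_{\det\EE\!,\,\cN}\colon\TT_{\Pic(S)}\to R\Gamma(\cO_S)[1]$: since $\Pic(S)$ is smooth with $\TT_{\Pic(S)}\cong H^1(\cO_S)\otimes\cO$, this map is the tautological isomorphism on $h^0$ and factors through $0\into H^2(\cO_S)$ on $h^1$, so is a (trivial) relative obstruction theory. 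Lemma~\ref{ABCL} then delivers the left column $\TT_{\cN/\Gamma(K_S)\times\Pic(S)}\to R\hom_{p\_X}(\EE,\EE)\_\perp[1]$ as a relative obstruction theory. I would then dualise via relative Serre duality for $p\_X$ (recalling $K_{X/B}\cong\cO\otimes\t^{-1}$, as in \eqref{sd}) to produce the claimed cotangent form $R\hom_{p\_X}(\EE,\EE)\_\perp[2]\t^{-1}\to\LL_{\cN/\Gamma(K_S)\times\Pic(S)}$; naturality of the Atiyah classes throughout diagrams \eqref{doc3}--\eqref{doc4} identifies this map with the $\At^\perp_{\EE\!,\,\cN}$ constructed in Section~\ref{detsec}.

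For 2-term perfectness, I would first compute fibrewise cohomology. At a closed point $t\in\cN$, Corollary~\ref{symme} gives the splitting $\Ext^i_X(\EE_t,\EE_t)\cong\Ext^i_X(\EE_t,\EE_t)\_\perp\oplus H^{i-1}(K_S)\oplus H^i(\cO_S)$. Simplicity of $\EE_t$ identifies $\Hom(\EE_t,\EE_t)=\C=H^0(\cO_S)$, and Serre duality on the 3-fold $X$ identifies $\Ext^3(\EE_t,\EE_t)=\C=H^2(K_S)$; these canonical identifications are exactly the splitting maps of Corollary~\ref{symme}, so $\Ext^i_X(\EE_t,\EE_t)\_\perp$ vanishes in degrees $0$ and $3$. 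Hence $R\hom_{p\_X}(\EE,\EE)\_\perp[2]\t^{-1}$ has fibrewise cohomology only in degrees $-1,0$, and a Nakayama/cone argument entirely analogous to Corollary~\ref{2ter} --- but applied symmetrically at both ends, using identity and trace as mutually Serre-dual splittings --- will promote this to perfectness of amplitude $[-1,0]$.

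Symmetry is the easiest piece: the equivariant Serre duality statement of Corollary~\ref{symme} reads $R\hom_{p\_X}(\EE,\EE)\_\perp^\vee\cong R\hom_{p\_X}(\EE,\EE)\_\perp[3]\t^{-1}$, which translates directly into $\big(R\hom_{p\_X}(\EE,\EE)\_\perp[2]\t^{-1}\big)^\vee[1]\cong R\hom_{p\_X}(\EE,\EE)\_\perp[2]$. This equals the original complex up to an overall weight-one twist by $\t$, so the obstruction theory is symmetric in the sense of \cite{Be} non-equivariantly. The step I expect to be most delicate is the amplitude upgrade: one must check that the Nakayama arguments of Corollary~\ref{2ter} (which there truncated the top of $R\hom_{p\_X}(\EE,\EE)$ only) interact coherently with the further splitting off of the $R\Gamma(\cO_S)$ and $R\Gamma(K_S)[-1]$ summands that distinguish $\_\perp$ from the full $R\hom_{p\_X}(\EE,\EE)$, and with the simultaneous truncation at both ends; this is essentially a careful book-keeping exercise around the central column of \eqref{doc3}, relying on the fact that the summands being split off are themselves pulled back from the point and so do not spoil perfectness.
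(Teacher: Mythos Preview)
Your proposal is correct and follows essentially the same route as the paper: apply Lemma~\ref{ABCL} to the middle rows of \eqref{doc3}--\eqref{doc4} (the factorisation $\cN\to\Gamma(K_S)\times\Pic(S)\to\Gamma(K_S)$), using \eqref{nite} for the central term and the trivial obstruction theory for $\Pic(S)$ on the right; then invoke basechange as in Corollary~\ref{2ter} for perfectness and Corollary~\ref{symme} for symmetry. Your extra care about the dualisation to cotangent form and about the interaction of the $\_\perp$-splittings with the Nakayama truncations is exactly the ``book-keeping'' the paper elides with the phrase ``just as in Corollary~\ref{2ter}''.
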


\begin{proof}
Consider the maps between the middle rows of \eqref{doc3} and \eqref{doc4}. Between the central terms the map is the obstruction theory \eqref{nite}. Between the right hand terms it is trivially an obstruction theory. Therefore the map between the left hand terms is also a (relative) obstruction theory by Lemma \ref{ABCL}.

That $R\hom_{p\_X}(\EE,\EE)\_\perp[2]$ is perfect of amplitude $[-1,0]$ follows from basechange just as in Corollary \ref{2ter}. Its symmetry was noted in Corollary \ref{symme}. Recording the $\C^*$ action explicitly as in Corollary \ref{2ter} explains the $\t^{-1}$. 
\end{proof}

%
%

\subsection{Higgs sheaves}
When the rank of the vector space jumps in the family --- i.e. when $E$ is a sheaf rather than a bundle --- things are more complicated and we need to replace sheaves by locally-free resolutions.

We first shrink $\curly M$ to the image of $\Pi\colon\cN\to\curly M$. This ensures boundedness since $\cN$ is quasi-projective.
As $\curly M$ parametrises only torsion-free sheaves, these all have homological dimension $\le1$ on $S$. Therefore the universal sheaf $\E$ also has homological dimension $\le1$.

Fix an ample line bundle $\cO(1)$ on $S$ (and use the same notation $\cO(1)$ for its pullbacks to $\curly M\times S$ and $\cN\times S$ as usual). Then for sufficiently large $N\gg0$ we get a surjection
\beq{surge}
\E_1:=\big(p_S^*p\_{S*}\E(N)\big)(-N)\rt{\ev}\E\To0 \qquad\mathrm{on}\ \curly M\times S.
\eeq
Since $\E$ is flat over $\curly M$, for $N\gg0$ we deduce that this $\E_1$ is locally free. Letting $\E_2$ be the kernel of \eqref{surge}), it is also locally free since $\E$ has homological dimension $\le1$. So from now on we fix this two-term locally free resolution 
\beq{rezz}
0\To \E_2\rt{d}\E_1\To\E\To 0.
\eeq
Its advantage is its functoriality, which implies that any (twisted, Higgs field) endomorphism of $E$ induces compatible endomorphisms of the $E_i$. The universal case is over $\cN/\curly M$, where the universal Higgs field $\Phi$ on (the pullback to $\cN\times S$ of) $\E$ induces canonical maps
$$
\xymatrix{
0 \ar[r]& \ker(\ev) \ar[r]^(.33)d\ar[d]^{\Phi_2}& \big(p_S^*p\_{S*}\E(N)\big)(-N) \ar[r]^(.75){\ev}\ar[d]_{\Phi_1:=}^{p_S^*p\_{S*}\Phi}& \E \ar[r]\ar[d]^\Phi& 0 \\
0 \ar[r]& \ker(\ev) \ar[r]^(.33)d& \big(p_S^*p\_{S*}\E(N)\big)(-N) \ar[r]^(.75){\ev}& \E \ar[r]& 0,\!}
$$
i.e. $\Phi_i\in\Hom(\E_i,\E_i\otimes K_S)$ such that
\beq{eq}
d\circ\Phi_2=\Phi_1\circ d.
\eeq
Moreover any maps $\Phi_i$ satisfying \eqref{eq} induce a map $\Phi$ which is \emph{unique}: other choices differ by an element of
\beqa
\Hom(\E_1,\E_2) &=& \Hom\big((p_S^*p\_{S*}\E(N))(-N),\E_2\big) \\
&\cong& \Hom\big(p\_{S*}(\E(N)),\,p\_{S*}(\E_2(N))\big)\ =\ 0,
\eeqa
since the choice of $\E_1$ ensures that $p\_{S*}(\E_2(N))=0$. We can phrase this as saying that $\cN$ is cut out of
\beq{embd}
\cN\ \subset\ \cN_1\times_{\curly M}\cN_2,
\eeq
by the equation \eqref{eq}. Here $\cN_i\to\curly M$ denotes the moduli space of Higgs fields $\phi_i$, with fibre over $E\in\curly M$ given by $\Hom(E_i,E_i\otimes K_S)$ (where the $E_i$ are the restriction to $S\times\{E\}$ of the $\E_i$ \eqref{rezz}).
\medskip

Using \eqref{rezz}, $R\hom(\E,\E)$ is computed by the total complex of
\beq{total}\xymatrix{
\E_1^*\otimes \E_2 \ar[r]^(.48){d^*\otimes1}\ar[d]_{1\otimes d}& \E_2^*\otimes \E_2 \ar[d]^{1\otimes d} \\ 
\E_1^*\otimes \E_1 \ar[r]^(.48){d^*\otimes1}&
\E_2^*\otimes \E_1.\!}
\eeq
Since the top left corner injects into $\E_1^*\otimes \E_1$ this is quasi-isomorphic to
$$
\frac{\E_1^*\otimes \E_1\ \oplus\ \E_2^*\otimes \E_2}{(1\otimes d,\,d^*\otimes1)\E_1^*\otimes \E_2}
\xymatrix@=60pt{\ar[r]^(.38){(d^*\otimes1,\,-1\otimes d)}&
\E_2^*\otimes \E_1}\vspace{-3mm}
$$
$$\hspace{-3cm}\xymatrix{
\ar[d]<1ex>^{\tr:=(\tr,-\tr)} \\
\cO_{\curly M\times S} \ar[u]^{\id:=(1,1)}}
$$
with the given trace and identity maps. The identity $\tr\circ\id=\rk\ne0$ gives the usual splitting
$$
R\hom(E,E)\ \cong\ R\hom(E,E)\_0\,\oplus\,\cO_{\curly M\times S}.
$$

Over $\cN$ the universal Higgs field $\Phi$ defines a section $\tr\Phi$:
$$
\cN\times S\rt\Phi\hom(\E,\E\otimes K_S)\rt{\mathcal H^0}R\hom(\E,\E\otimes K_S)\rt{\tr}K_S,
$$
where as usual we have abbreviated $\pi^*\E$ to $\E$ and also suppressed the pullback on $K_S$. So we have all the same ingredients as we had in the Higgs bundles case, except we have not yet shown that the composition
\beq{one}\xymatrix{
\TT_{\cN\times S/\curly M\times S} \ar[r]^(.42){\At^{\red}}& R\hom(\E,\E\otimes K_S) \ar[r]^(.7){\tr}& K_S}
\eeq
is the same as\footnote{In other words, we will not extend all of Proposition \ref{ni} from bundles to sheaves to give the full equivalence of the two natural perfect relative obstruction theories for $\cN/\curly M$, as it is enough for our purposes to deal only with the trace.} 
\beq{two}\xymatrix@=36pt{
\TT_{\cN\times S/\curly M\times S} \ar[r]^(.6){D(\tr\Phi)}& K_S.}
\eeq
\medskip

To prove the equality of \eqref{one} and \eqref{two}
we repeat our earlier analysis for Higgs fields on vector bundles to both $(\E_1,\phi_1)$ and $(\E_2,\phi_2)$. Section \ref{qdef} gives reduced Atiyah classes \eqref{redat}
$$
\At^{\red}_{\phi_i}\in R\Hom\Big(\E_i\otimes K_S^{-1},\E_i\otimes\LL_{\cN_i/\curly M}\Big), \quad i=1,2,
$$
which we consider as maps
\beq{i}
\At^{\red}_{\phi_i}\colon p_S^*\TT_{\cN_i/\curly M}\To R\hom(\E_i\otimes K_S^{-1},\E_i)\ \cong\ \E_i^*\otimes\E_i\otimes K_S.
\eeq
They determine the reduced Atiyah class $\At^{\red}_\Phi$ of $\E$ as follows.

\begin{prop}
Using the resolution \eqref{rezz} to write $R\hom$ as a total complex \eqref{total}, the reduced Atiyah class of $\E$,
\beq{1}\xymatrix{
p_S^*\TT_{\cN/\curly M}\ \ar[rr]^-{\At^{\red}_\Phi}&& \ R\hom(\E\otimes K_S^{-1},\E).}\hspace{6mm}
\eeq
becomes\vspace{-7mm}
\beq{2}\begin{array}{c}
\\ \xymatrix@=30pt{
p_S^*\TT_{\cN/\curly M}\ 
\ar[rr]^-{\left(\begin{array}{cc} \scriptstyle{0} & \!\!\scriptscriptstyle{\At^{\red}_{\phi_2}}\!\!\!\! \\ \!\!\scriptscriptstyle{\At^{\red}_{\phi_1}}\!\!\!& \scriptstyle{0}\end{array}\right)}&&} \\ \end{array}\!\!{\footnotesize\xymatrix{
\E_1^*\otimes \E_2\otimes K_S \ar[r]^(.48){d^*\otimes1}\ar[d]^{1\otimes d}& \E_2^*\otimes \E_2\otimes K_S \ar[d]^{1\otimes d} \\ 
\E_1^*\otimes \E_1\otimes K_S \ar[r]^(.48){d^*\otimes1}&
\E_2^*\otimes \E_1\otimes K_S,\!}}
\eeq
where $\At_{\phi_i}^{\red}$ denotes the reduced Atiyah class \eqref{i} restricted to $\cN$ by \eqref{embd}.
\end{prop}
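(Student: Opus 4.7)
The plan is to compute the reduced Atiyah class $\At^{\red}_\Phi$ by lifting the situation to the locally free resolution \eqref{rezz}, and to reduce each entry to the already-understood Atiyah class of a Higgs bundle via Proposition \ref{ni}.

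First I would upgrade \eqref{rezz} to a resolution on $\cN\times X$. Applying the spectral construction to each of the locally free pairs $(\E_i,\phi_i)$ produces torsion sheaves $\EE_i$ on $\cN\times X$ with resolutions
$$0\To\pi^*\E_i\otimes K_S^{-1}\rt{\pi^*\phi_i-\eta}\pi^*\E_i\To\EE_i\To0.$$
The compatibility $d\phi_2=\phi_1 d$ from \eqref{eq} ensures that $\pi^*d$ intertwines the Higgs fields, hence induces a morphism $\EE_2\to\EE_1$. Arranging the two short exact sequences into a $2\times2$ double complex on $\cN\times X$ and computing its total cohomology both horizontally and vertically shows that $[\EE_2\to\EE_1]$ is quasi-isomorphic to $\EE$ in the derived category.

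Next I would use this resolution to represent $\At^{\red}_\Phi$. Functoriality of Illusie's construction from Section \ref{qdef} lets the defining diagram \eqref{reddg} for the quotient $\pi^*\E\to\EE$ be computed term-wise via the resolutions $\pi^*\E_\bullet$ and $[\EE_2\to\EE_1]$. Proposition \ref{ni} applied to each locally free Higgs bundle $(\E_i,\phi_i)$ identifies the reduced Atiyah class of the spectral quotient $\pi^*\E_i\to\EE_i$ with the derivative of the classifying map $\cN\to\cN_i$ induced by \eqref{embd}. These account for the top-right and bottom-left matrix entries of \eqref{2}, landing in $\Hom(\E_i,\E_i\otimes K_S)$.

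The remaining --- and main --- difficulty is to verify that the top-left entry in $\Hom(\E_1,\E_2\otimes K_S)$ and the bottom-right entry in $\Hom(\E_2,\E_1\otimes K_S)$ both vanish. These would record deformations of the resolution differential $d\colon\E_2\to\E_1$ in the $\cN/\curly M$ direction. Since \eqref{rezz} is pulled back from the stack $\curly M$, and the map $\cN\to\curly M$ keeps the underlying sheaf $\E$ (and hence $d$) fixed, the corresponding Kodaira--Spencer class vanishes. The technical obstacle is to turn this geometric observation into a genuine diagram chase: one must lift \eqref{reddg} to a commutative diagram of double complexes arising from the resolution, track each of the four corner contributions, and check that the cross-terms which would be produced by an infinitesimal change in $d$ really are zero. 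Once this bookkeeping is done, the matrix form follows immediately from Proposition \ref{ni} at each diagonal position, in direct parallel with the bundle case handled by Lemma \ref{compl}.
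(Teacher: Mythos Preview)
Your approach is workable but takes a harder road than the paper, and the detour through Proposition~\ref{ni} is unnecessary for what is being claimed here. The proposition only asserts that $\At^{\red}_\Phi$, when written against the total complex of \eqref{total}, is the diagonal chain map with entries $\At^{\red}_{\phi_i}$; it does \emph{not} require identifying those entries with derivatives of classifying maps, which is the content of Proposition~\ref{ni}. The paper's argument is pure functoriality of Illusie's construction: the surjections $\pi^*\E_2\to\pi^*\E_1\to\pi^*\E$ and $\cE_2\to\cE_1\to\EE$ on $\cN\times X$ induce maps of the square-zero thickenings $\cN\times X[\,\cdot\,]$, hence of their cotangent complexes, hence (after taking weight-1 parts and pushing down) a commutative diagram of exact triangles
\[
\xymatrix@R=15pt{
\TT_{\cN/\curly M}\otimes\E_2 \ar[r]\ar[d]^{\At^{\red}_{\phi_2}} & \TT_{\cN/\curly M}\otimes\E_1 \ar[r]\ar[d]^{\At^{\red}_{\phi_1}} & \TT_{\cN/\curly M}\otimes\E \ar[d]^{\At^{\red}_\Phi} \\
\E_2\otimes K_S \ar[r] & \E_1\otimes K_S \ar[r] & \E\otimes K_S.}
\]
The left-hand square \emph{is} a diagonal chain map representing $\At^{\red}_\Phi$, so the matrix form \eqref{2} drops out with no separate off-diagonal analysis. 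What your route buys is nothing extra: the ``main difficulty'' you identify --- vanishing of the cross-terms recording deformations of $d$ --- simply does not arise if you work at the level of maps of thickenings rather than trying to decompose a derived-category morphism after the fact. Your geometric reason for the vanishing (that the resolution is pulled back from $\curly M$) is correct, but the paper's functoriality argument makes it automatic rather than a bookkeeping exercise.
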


Note that \eqref{2} really defines a map in the derived category to the total complex on the right hand side because $d\circ\At^{\red}_{\phi_2}=\At^{\red}_{\phi_1}\circ\,d$ by \eqref{eq} and the functoriality of Atiyah classes under the pullback to $\cN\subset\cN_1\times_{\curly M}\cN_2$.

\begin{proof}
We follow the definition of reduced Atiyah class in \eqref{LLL}. The maps of sheaves on $\cN\times X$,
$$
\xymatrix@R=18pt{
\pi^*\E_2 \ar[r]\ar[d]& \pi^*\E_1 \ar[r]\ar[d]& \pi^*\E \ar[d]\\
\cE_2 \ar[r]& \cE_1 \ar[r]& \EE,\!}
$$
induce maps of spaces
$$\xymatrix@R=18pt{
\cN\times X[\pi^*\E_2] &\ar[l] \cN\times X[\pi^*\E_1] &\ar[l]
\cN\times X[\pi^*\E] \\
\cN\times X[\cE_2] \ar[u]&\ar[l]\ar[u] \cN\times X[\cE_1] &\ar[l]\ar[u] \cN\times X[\EE],\!}
$$
where the vertical maps are embeddings with ideals $\pi^*\E_2\otimes K_S^{-1},\,\pi^*\E_1\otimes K_S^{-1}$ and $\pi^*\E\otimes K_S^{-1}$ respectively. These induce maps of cotangent complexes
{\small
$$\xymatrix@=13pt{
\LL_{\cN\times X[\cE_2]/\cN\times X[\pi^*\E_2]} \ar[r]\ar[d]& \LL_{\cN\times X[\cE_1]/\cN\times X[\pi^*\E_1]} \ar[r]\ar[d]&
\LL_{\cN\times X[\EE]/\cN\times X[\pi^*\E]} \ar[d] \\
\LL_{\cN\times X[\pi^*\E_2]/\curly M\times X[\pi^*\E_2]}[1]\! \ar[r]& \!\LL_{\cN\times X[\pi^*\E_1]/\curly M\times X[\pi^*\E_1]}[1]\! \ar[r]& \!\LL_{\cN\times X[\pi^*\E]/\curly M\times X[\pi^*\E]}[1].\!\!}
$$}
Taking weight 1 parts of their pushdowns to $\cN\times X$ gives
$$\xymatrix@R=18pt{
\pi^*\E_2\otimes K_S^{-1}[1] \ar[r]\ar[d]^{\At^{\red}_{\phi_2}}& \pi^*\E_1\otimes K_S^{-1}[1] \ar[r]\ar[d]^{\At^{\red}_{\phi_1}}& \pi^*\E\otimes K_S^{-1}[1] \ar[d]^{\At^{\red}_\Phi} \\
\LL_{\cN/\curly M}\otimes\cE_2[1] \ar[r]& 
\LL_{\cN/\curly M}\otimes\cE_1[1] \ar[r]& 
\LL_{\cN/\curly M}\otimes\EE[1],\!}
$$
which is equivalent, by adjunction and $\pi_*\;\cE_i=E_i,\ \pi_*\;\EE=\E$, to the following diagram of horizontal exact triangles on $\cN\times S$:
$$\xymatrix@R=18pt{
\TT_{\cN/\curly M}\otimes \E_2 \ar[r]\ar[d]^{\At^{\red}_{\phi_2}}& \TT_{\cN/\curly M}\otimes \E_1 \ar[r]\ar[d]^{\At^{\red}_{\phi_1}}& \TT_{\cN/\curly M}\otimes\E \ar[d]^{\At^{\red}_\Phi} \\
\E_2\otimes K_S \ar[r]& \E_1\otimes K_S \ar[r]& \E\otimes K_S.\!}
$$
The commutativity of this diagram gives the claimed result.
\end{proof}

In particular
\beqa
\tr\circ\At^{\red}_\Phi &=& \tr\circ\At^{\red}_{\phi_1}-\tr\circ\At^{\red}_{\phi_2} \\
&=& D\big(\!\tr(\phi_1)\big)-D\big(\!\tr(\phi_2)\big)\ =\ D(\tr\Phi)
\eeqa
by applying (\ref{Hitch}, \ref{tracemap}) to both Higgs \emph{bundles} $(E_i,\phi_i)$. Hence \eqref{one} equals \eqref{two} and Sections \ref{tracesec} and \ref{detsec} go through for Higgs sheaves as well as bundles. Thus Proposition \ref{close} holds for Higgs sheaves too:

\begin{thm} \label{thear}
There is a 2-term symmetric perfect relative obstruction theory
$$
\At_{\EE\!,\,\cN}^\perp\colon R\hom_{p\_X}(\EE,\EE)\_\perp[2]\t^{-1}\To\LL_{\cN/\Gamma(K_S)\times\Pic(S)}
$$
for $\cN\big/\Gamma(K_S)\times\Pic(S)$. \hfill$\square$
\end{thm}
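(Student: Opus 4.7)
The plan is to bootstrap Proposition \ref{close} from Higgs bundles to torsion-free Higgs sheaves by resolving the universal sheaf locally freely and applying the bundle case to each piece of the resolution. The symmetry, two-term nature, and $\t^{-1}$-twist all come from Corollaries \ref{symme} and \ref{2ter} verbatim, so the only new content is the construction of the obstruction theory map itself, which in turn reduces --- via Lemma \ref{ABCL} and the diagrams \eqref{pech}, \eqref{doc}, \eqref{doc3}, \eqref{doc4} --- to checking the trace identity $\tr\circ\At^{\red}_\Phi=D(\tr\Phi)$ (i.e.\ equality of \eqref{one} and \eqref{two}) when $\E$ is merely torsion-free.

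First I would shrink $\curly M$ to the image of $\Pi$ (quasi-projective, hence bounded) and fix the two-term resolution $0\to\E_2\to\E_1\to\E\to 0$ from \eqref{rezz}, choosing $N\gg0$ so that $p\_{S*}\E_2(N)=0$. This vanishing forces every $K_S$-twisted endomorphism of $\E$ to lift uniquely to a commuting pair of endomorphisms of $\E_1,\E_2$, so the universal Higgs field $\Phi$ lifts canonically to $\Phi_1,\Phi_2$ satisfying $d\circ\Phi_2=\Phi_1\circ d$; this realises $\cN\subset\cN_1\times_{\curly M}\cN_2$ as a closed subscheme cut out by this commutation relation.

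The crux is then a resolution-formula for $\At^{\red}_\Phi$: under the total-complex presentation \eqref{total} of $R\hom(\E,\E\otimes K_S)$, the reduced Atiyah class equals the antidiagonal built from $\At^{\red}_{\phi_1}$ and $\At^{\red}_{\phi_2}$. I would derive this by applying Illusie's construction of the reduced Atiyah class to the three square-zero thickenings $\cN\times X[\cE_\bullet]\hookrightarrow\cN\times X[\pi^*\E_\bullet]$ coming from the spectral construction applied to the resolution, then taking weight-$1$ parts of the induced maps of relative cotangent complexes. Taking trace, one obtains $\tr\circ\At^{\red}_\Phi=\tr\circ\At^{\red}_{\phi_1}-\tr\circ\At^{\red}_{\phi_2}$, and each summand equals $D(\tr\phi_i)$ by applying Proposition \ref{ni} and the Hitchin-square \eqref{Hitch} to the Higgs \emph{bundle} $(\E_i,\phi_i)$; their difference is $D(\tr\Phi)$ since $\tr\Phi=\tr\phi_1-\tr\phi_2$.

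With the trace identity in hand, diagrams \eqref{pech}, \eqref{doc}, \eqref{doc3}, \eqref{doc4} continue to commute in the sheaf setting, and the iterated application of Lemma \ref{ABCL} used in the proof of Proposition \ref{close} produces the desired symmetric perfect relative obstruction theory $\At_{\EE\!,\,\cN}^\perp$ for $\cN/\Gamma(K_S)\times\Pic(S)$. The main obstacle is precisely the antidiagonal formula for $\At^{\red}_\Phi$: in the bundle case one appealed to Lemma \ref{compl} via the smooth linear structure on $\sH\to\curly M\times S$, but for torsion-free $\E$ that structure is unavailable and one must route through Illusie's functoriality on a locally free presentation, verifying the formula in the derived category rather than merely up to the commutation $d\circ\At^{\red}_{\phi_2}=\At^{\red}_{\phi_1}\circ d$ needed for it to make sense.
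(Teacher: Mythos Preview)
Your proposal is correct and follows essentially the same approach as the paper: resolve $\E$ by the two-term locally free complex \eqref{rezz}, lift $\Phi$ to $(\Phi_1,\Phi_2)$, establish the antidiagonal formula for $\At^{\red}_\Phi$ in terms of $\At^{\red}_{\phi_i}$ via Illusie's functoriality on the square-zero thickenings, take trace to reduce the identity $\tr\circ\At^{\red}_\Phi=D(\tr\Phi)$ to the bundle case already handled by \eqref{Hitch}, and then rerun the argument of Proposition~\ref{close}. Your identification of the antidiagonal formula as the crux, and of the need to prove it in the derived category via Illusie rather than via the linear structure on $\sH$, matches the paper's treatment exactly.
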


\section{The $SU(r)$ Vafa-Witten invariant}\label{SU}
Restricting the relative obstruction theory of Theorem \ref{thear} to the fibre $\cN_L^\perp$ over
$$
(L,0)\ \in\ \Pic(S)\times\Gamma(K_S)
$$
we get an absolute obstruction theory.

\begin{thm}\label{final}
The moduli space $\cN_L^\perp$ of stable Higgs sheaves $(E,\phi)$ with $\det E\cong L$ and trace-free $\phi\in\Hom(E,E\otimes K_S)\_0$ admits a 2-term symmetric perfect obstruction theory
$$
R\hom_{p\_X}(\EE,\EE)\_{\perp\,}[1]\t^{-1}\To\LL_{\cN_L^\perp}\,.
\vspace{-8mm}
$$
$\hfill\square$
\end{thm}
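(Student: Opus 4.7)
The plan is to obtain Theorem~\ref{final} by restricting the relative perfect obstruction theory of Theorem~\ref{thear} to the fibre over $(L,0)\in\Pic(S)\times\Gamma(K_S)$, and then converting it to an absolute one using the smoothness of the base $\Pic(S)\times\Gamma(K_S)$.

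First I would identify $\cN_L^\perp$ as the schematic fibre of the morphism
$$
(\det,\tr)\colon\ \cN\To\Pic(S)\times\Gamma(K_S), \qquad (E,\phi)\Mapsto(\det E,\,\tr\phi),
$$
over the closed point $(L,0)$. The existence and compatibility of this morphism with the Atiyah-class obstruction theories are precisely what the diagrams \eqref{doc3} and \eqref{doc4} of Sections~\ref{tracesec} and \ref{detsec} encode.

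Next I would exploit smoothness: $Z:=\Pic(S)\times\Gamma(K_S)$ is smooth, since $\Pic(S)$ is smooth of dimension $h^{0,1}(S)$ and $\Gamma(K_S)=H^0(S,K_S)$ is a vector space. Hence $\{(L,0)\}\into Z$ is a regular closed immersion; the derived and classical fibres of $(\det,\tr)$ at this point agree, and the canonical comparison map
$$
\LL_{\cN/Z}\big|_{\cN_L^\perp}\To\LL_{\cN_L^\perp}
$$
is a quasi-isomorphism. Pulling back the relative POT of Theorem~\ref{thear} along $\cN_L^\perp\into\cN$ and composing with this quasi-isomorphism produces the map
$$
R\hom_{p\_X}(\EE,\EE)\_\perp[1]\t^{-1}\big|_{\cN_L^\perp}\To\LL_{\cN_L^\perp\,}.
$$

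Finally I would verify the asserted properties. The 2-term perfect amplitude of the source is inherited from Theorem~\ref{thear} by flat base change along $\cN_L^\perp\into\cN$. The conditions defining a POT (isomorphism on $h^0$ and surjection on $h^{-1}$) can be checked pointwise, and so survive the restriction. Symmetry is inherited from Corollary~\ref{symme}, where $R\hom_{p\_X}(\EE,\EE)\_\perp$ was shown to be Serre dual to its own shift by $[3]$; this self-duality commutes with pullback along the closed embedding.

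The only genuinely non-formal step is the base-change quasi-isomorphism $\LL_{\cN/Z}|_{\cN_L^\perp}\cong\LL_{\cN_L^\perp}$, which rests squarely on the smoothness of $Z$. Everything else is bookkeeping: all of the hard deformation-theoretic content --- producing a relative POT whose source splits off the trace and determinant contributions --- has already been absorbed into Theorem~\ref{BF}, Proposition~\ref{close} and Theorem~\ref{thear}.
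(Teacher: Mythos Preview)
Your approach matches the paper's exactly: the sentence preceding Theorem~\ref{final} is the entire proof --- restrict the relative perfect obstruction theory of Theorem~\ref{thear} to the fibre over $(L,0)\in\Pic(S)\times\Gamma(K_S)$.

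Two small technical slips in your elaboration are worth flagging. First, the inclusion $\cN_L^\perp\hookrightarrow\cN$ is not flat, so ``flat base change'' is the wrong phrase; what you actually need is that perfect complexes remain perfect of the same amplitude under \emph{arbitrary} pullback, which is immediate from a local free resolution. Second, smoothness of $Z=\Pic(S)\times\Gamma(K_S)$ alone does \emph{not} guarantee that the derived and classical fibres of $(\det,\tr)$ agree, nor that $\LL_{\cN/Z}|_{\cN_L^\perp}\to\LL_{\cN_L^\perp}$ is a full quasi-isomorphism --- that would require Tor-independence (e.g.\ flatness of $\cN\to Z$), which is not asserted. Fortunately you do not need this: relative perfect obstruction theories pull back along \emph{any} base change of the base \cite[Proposition~7.2]{BF}, so restricting to the fibre over a point automatically yields an absolute perfect obstruction theory, and only $h^0$ and $h^{-1}$ are at stake. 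With these two corrections the argument is complete.
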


Localising as in Section \ref{prelim}, we can now give a general definition of our Vafa-Witten invariant for any surface $S$ (which agrees with Preliminary Definition \ref{pdf} when $h^{0,1}(S)=0=h^{0,2}(S)$).

\begin{defn}\label{SUdef}
Let $S$ be a smooth projective complex surface, and fix $(r,c_1,c_2)$ with $r>0$ for which all Gieseker semistable Higgs sheaves are Gieseker stable. We define
\beq{defvw}
\VW_{r,c_1,c_2}(S)\ :=\ \int_{\big[(\cN_{r,L,c_2}^\perp(S))^{\C^*}\big]^{\vir\ }}\frac1{e(N^{\vir})}\ \in\ \Q,
\eeq
where $L$ is any line bundle on $S$ with $c_1(L)=c_1$.
\end{defn}

\begin{rmk}\normalfont This is deformation invariant under deformations of $S$ for which $c_1$ remains of type (1,1). More precisely, suppose $\cS\to B$ is a smooth family of projective surfaces over a connected base $B$ with a global class $c_1\in H^{1,1}(\cS)$ whose restriction to any fibre $\cS_b,\ b\in B,$ we also denote by $c_1\in H^{1,1}(\cS_b)$.\footnote{Equivalently the classifying map from $B$ to the moduli stack of surfaces $S$ has image in the \emph{Noether-Lefschetz locus} of $(S,c_1)$.} Let $\cL$ be any line bundle over $\cS$ with $c_1(\cL)=c_1$. Then just as in Section \ref{potsec} we may do everything relative to $B$ so that $\cN_{\cL}^\perp\to B$ has a perfect relative obstruction theory, inducing one on its $\C^*$-fixed locus by \cite{GP}. Thus
$$
\int_{\big[(\cN_{\cL_b}^\perp(\cS_b))^{\C^*}\big]^{\vir\ }}\frac1{e(N^{\vir})}
$$
is independent of $b\in B$ by \cite[Proposition 7.2]{BF} and conservation of number \cite[Theorem 10.2]{Fu}.

The invariant need \emph{not} be deformation invariant, however, under deformations of $S$ for which the Hodge type of $c_1(L)$ does not remain of Hodge type $(1,1)$.
In this situation there are no sheaves with $\det E=L$ on the deformed surface, so the invariant becomes zero.
\end{rmk}

\subsection{More general Higgs pairs} \label{later}
In Theorem \ref{final} we can replace $K_S$ by any line bundle $\cL\to S$ with
$\deg \cL\ge\deg K_S$
at the expense of dropping the word ``\emph{symmetric}". That is, we consider the moduli space $\cN_{r,L,c_2}^\perp$ of stable $\cL$-Higgs pairs
$$
(E,\phi),\quad\det E\,\cong\,L,\quad \phi\in\Hom(E,E\otimes \cL)\_0
$$
on $S$ of rank $r$ and second Chern class $c_2$. Equivalently, it is the moduli space of stable torsion sheaves $\cE_\phi$ on the total space
$$
X\ =\ \mathrm{Tot}\,\cL\To S
$$
with centre of mass zero on the fibres of $X\to S$ and $\det\pi_*\;\cE_\phi\cong L$.

\begin{thm} \label{ffinal}
Fix $(r,L,c_2)$ such that semistability implies stability. Suppose $\cL$ satisfies
\begin{itemize}
\item $\deg \cL>\deg K_S$, or
\item $\deg \cL=\deg K_S$ but $\cL^{\otimes r}\not\cong K_S^{\otimes r}$, or
\item $\cL=K_S$.
\end{itemize}
Then $\cN_{r,L,c_2}^\perp$ admits a 2-term perfect obstruction theory
$$
R\hom_{p\_X}(\EE,\EE\otimes K_X)\_{\perp\,}[1]\t^{-1}\To\LL_{\cN_{r,L,c_2}^\perp}\,.
$$
\end{thm}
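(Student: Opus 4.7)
The plan is to rerun Sections \ref{potsec}--\ref{SU} step by step with $K_S$ replaced by $\cL$ throughout, carrying the now non-trivial sheaf twist $K_{X/B}\cong\pi^*(K_{S/B}\otimes\cL^{-1})$ (of $\C^*$-weight $-1$) on the $\Hom$-target of every Atiyah-class construction. The symmetry of Proposition \ref{symmetric} is lost, but the existence of the POT and its 2-term amplitude are preserved under the three listed hypotheses. The construction proceeds in three steps: build the unconstrained POT, run the trace/determinant splittings, then verify the 2-term amplitude.

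The first step is to re-derive Theorem \ref{BF} and Corollary \ref{2ter} in the twisted setting. Relative Serre duality on $\pi\colon X\to S$ now picks up a $K_{X/B}$ factor when dualising the partial Atiyah class, producing a map $R\hom_{p\_X}(\EE,\EE\otimes K_{X/B})[2]\to\LL_{\cN/B}$. Illusie's extension-class argument underlying Theorem \ref{BF} is indifferent to any twist on the $\Hom$-target, so it applies verbatim. For the truncation to amplitude $[-1,0]$, stability gives $\Hom_X(\cE_t,\cE_t)\cong\C$ to split off identity at one end, and relative Serre duality combined with stability gives $\Ext^3_X(\cE_t,\cE_t\otimes K_{X/B})\cong\Hom_X(\cE_t,\cE_t)^*\cong\C$ to split off a rank-1 locally free piece at the other.

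The second step is to rerun Sections \ref{more}--\ref{SU}. Propositions \ref{funcAt}, \ref{CHF} and \ref{ni}, Lemmas \ref{lemme}--\ref{compl}, and the trace/determinant splittings of Sections \ref{tracesec}--\ref{detsec} are already written for a general line bundle $\cL$ and pass through with no modification. Assembling the twisted analogues of diagrams (\ref{doc})--(\ref{doc4}) yields a relative POT $R\hom_{p\_X}(\EE,\EE\otimes K_{X/B})\_\perp[2]\t^{-1}\to\LL_{\cN/\Gamma(\cL)\times\Pic(S)}$; restricting to $(L,0)\in\Pic(S)\times\Gamma(\cL)$ gives the absolute POT of the theorem.

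The hard part will be the third step: showing that the $\perp$-complex still has 2-term amplitude $[-1,0]$. The only possible failure is the non-vanishing of $\Hom_X(\cE_t,\cE_t\otimes K_{X/B})\_\perp$, which under the spectral correspondence is the trace-free group $\Hom_S(E_t,E_t\otimes(K_S\otimes\cL^{-1}))\_0$. When $\deg\cL>\deg K_S$, the twist has strictly negative degree, so slope stability of $E_t$ forces any map $E_t\to E_t\otimes(K_S\otimes\cL^{-1})$ to vanish. When $\deg\cL=\deg K_S$ but $\cL^{\otimes r}\not\cong K_S^{\otimes r}$, any nonzero such map between slope-stable sheaves of equal slope and rank is an isomorphism, which on determinants forces $(K_S\otimes\cL^{-1})^{\otimes r}\cong\cO_S$ --- exactly the excluded case. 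When $\cL=K_S$ the twist is trivial and we recover Theorem \ref{final}. In each of the three cases the required vanishing holds and the $\perp$-complex has the claimed 2-term amplitude.
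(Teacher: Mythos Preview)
Your overall architecture matches the paper's: twist the Atiyah-class construction by $K_{X/B}\cong\pi^*(K_{S/B}\otimes\cL^{-1})$, rerun the trace/determinant splittings, and check the amplitude. The second step is fine; the paper agrees that Sections \ref{more}--\ref{SU} go through with only cosmetic changes.

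There is, however, a genuine gap in your third step. You write that vanishing of $\Hom_X(\cE_t,\cE_t\otimes K_X)\_\perp$ follows from ``slope stability of $E_t$'' forcing $\Hom_S(E_t,E_t\otimes K_S\cL^{-1})=0$. But $E_t=\pi_*\cE_t$ is \emph{not} assumed to be slope stable as a sheaf on $S$ --- only the Higgs pair $(E_t,\phi_t)$ is stable, equivalently $\cE_t$ is stable on $X$. A stable Higgs pair can have a wildly unstable underlying sheaf (any destabilising subsheaf that is not $\phi$-invariant is invisible to Higgs stability), so your inequality argument on $S$ does not apply. The same objection kills your equal-degree case: you invoke ``any nonzero such map between slope-stable sheaves\ldots is an isomorphism'', again about $E_t$.

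The fix is to argue on $X$ directly, as the paper does. When $\deg\cL>\deg K_S$ the line bundle $K_X$ has strictly negative degree against the pulled-back polarisation, so $\cE_t\otimes K_X$ is stable with strictly smaller reduced Hilbert polynomial than $\cE_t$, and $\Hom_X(\cE_t,\cE_t\otimes K_X)=0$ by the usual hom-vanishing between stable sheaves. When $\deg\cL=\deg K_S$, any nonzero $f\colon\cE_t\to\cE_t\otimes K_X$ between stable sheaves of equal reduced Hilbert polynomial is an isomorphism; applying $\pi_*$ gives an isomorphism $E_t\cong E_t\otimes K_S\cL^{-1}$ whose determinant forces $\cL^{\otimes r}\cong K_S^{\otimes r}$, the excluded case. (Your identification of $\Hom_X(\cE_t,\cE_t\otimes K_X)\_\perp$ with $\Hom_S(E_t,E_t\otimes K_S\cL^{-1})\_0$ is also not quite right --- the former only \emph{injects} into the latter via the exact triangle --- but since you want vanishing, the direction of the inclusion would have been harmless had the $S$-side argument been valid.)
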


We briefly sketch the modifications that have to be made to the proof of Theorem \ref{final} to derive this. Proposition \ref{symmetric} is the first place we set $\cL=K_S$. For general $\cL$ it is modified to say that the exact triangle \eqref{deftri2}
$$
\xymatrix@C=30pt{
R\hom_{p\_X}(\EE,\EE) \ar[r]^{\pi_*}& R\hom_{p\_S}(\E,\E) \ar[r]^(.42){[\ \cdot\ ,\,\phi]} & R\hom_{p\_S}(\E,\E\otimes \cL)}
$$
is Serre dual (up to a shift) to the exact triangle
\begin{multline*}
\xymatrix@C=25pt{
R\hom_{p\_X}(\EE,\EE\otimes K_{X/B}) \ar[r]^-{\pi_*}& R\hom_{p\_S}(\E,\E\otimes K_{S/B}\otimes \cL^{-1})} \\
\xymatrix@C=35pt{\ar[r]^-{[\ \cdot\ ,\,\phi]} & R\hom_{p\_S}(\E,\E\otimes K_{S/B})}
\end{multline*}
made in the same way: by applying $Rp\_{X*}$ to the exact triangle \eqref{locv} and its twist by $K_{X/B}\cong\pi^*(K_{S/B}\otimes \cL^{-1})$ respectively.

Corollary \ref{symme} then gets modified by replacing $K_{S/B}$ by $\cL$. In the proof we note the triangle \eqref{OM2} can be twisted by the line bundle $K_X\cong \cL^{-1}\otimes K_S$ to become
$$
\cL^{-1}K_S\!\rt{\!\id\!}\!R\hom_\pi(\EE,\EE\otimes K_X)\!\rt{\!\!\pi_*\!\!}\!R\hom(\E,\E\otimes \cL^{-1}K_S)\!\rt{\!\tr\!}\!\cL^{-1}K_S,
$$
allowing us to replace \eqref{OM} by 
$$
R\hom_{p\_X}(\EE,\EE)\longleftrightarrow Rp\_{S*\,}\cL[-1].
$$
This allows us to remove $Rp\_{S*\;}\cL[-1]\oplus Rp\_{S*\;}\cO_S$ from $R\hom_{p\_X}(\EE,\EE)$ to form $R\hom_{p\_X}(\EE,\EE)\_\perp$ much as before. Serre duality inserts an extra $K_X$ in both \eqref{att},
\beq{last}
\At_{\EE\!,\,\cN}\colon R\hom_{p\_X}(\EE,\EE\otimes K_X)\t^{-1}[2]\To\LL_{\cN/B},
\eeq
and \eqref{pot}. To make Corollary \ref{2ter} run we have to ensure the trace-free part of the left hand side of \eqref{last} is 2-term. To do this we modify \eqref{ext3} by
\beq{ghy}
\Ext^3_X(\EE_t,\EE_t)\ \cong\ \Hom\_X(\EE_t,\EE_t\otimes K_X)\ =\ 0
\eeq
when $\cL\ne K_S$. When $\deg \cL>\deg K_S$ then $\deg K_X<0$, so \eqref{ghy} follows from stability. When $\deg \cL=\deg K_S$, any nonzero element of $\Hom\_X(\EE_t,\EE_t\otimes K_X)$ must be an isomorphism by stability. By pushdown we get an isomorphism $\E_t\to\E_t\otimes \cL^{-1} K_S$ whose determinant is an isomorphism $\cL^{\otimes r}\to K_S^{\otimes r}$, contradicting our assumption and so proving \eqref{ghy}.

The rest of the proof of Theorem \ref{final} can then be copied almost verbatim, with a few twists by line bundles, to deduce Theorem \ref{ffinal}.

\section{Vanishing theorem and the two fixed loci}\label{fixsec}

The integral \eqref{defvw} is over two types of fixed components, as mentioned in (\ref{cpt1}, \ref{cpt2}).

\subsection{The first fixed locus}
The instanton branch has $\phi=0$ and so gives the moduli space $\cM_L$ of Gieseker stable\footnote{Recall we are fixing $(r,c_1,c_2)$ for which stability\,=\,semistability.} sheaves on $S$ of fixed determinant $L$. Consider the dual of the top row of the diagram of Corollary \ref{symme}. Its coboundary map is $[\ \cdot\ ,\phi]$ by \eqref{deftri2} which vanishes when $\phi=0$. Therefore the exact triangle splits the obstruction theory of Theorem \ref{final}
$$
R\hom_{p\_X}(\EE,\EE)\_\perp[2]\t^{-1}\ \cong\ R\hom_{p\_S}(\E,\E\otimes K_S)\_0[1]\ \oplus\ R\hom_{p\_S}(\E,\E)\_0[2]\t^{-1}
$$
into fixed and moving parts. By \cite{GP} the former defines a perfect obstruction theory for $\cM_L$,
\beq{fixie}
E\udot\,:=\ R\hom_{p\_S}(E,E\otimes K_S)\_0[1]\,\To\,\LL_{\cM_L}.
\eeq
Here $E\udot$ is quasi-isomorphic to a 2-term complex of $\C^*$-fixed locally free sheaves $E^{-1}\to E^0$. Similarly, dualising the moving parts gives the virtual normal bundle
$$
N^{\vir}\ =\ R\hom_{p\_S}(E,E\otimes K_S)\_0\;\t\ =\ E\udot\otimes\t[-1].
$$
Therefore the contribution of the fixed locus $\cM_L$ to the invariant \eqref{def1} is
\beqa
\int_{[\cM_L]^{\vir}}\frac1{e(N^{\vir})} &=& \int_{[\cM_L]^{\vir}}\frac{c^{\C^*}_s\!(E^0\otimes\t)}{c^{\C^*}_r\!(E^{-1}\otimes\t)} \\
&=& \int_{[\cM_L]^{\vir}}\frac{c_s(E^0)+tc_{s-1}(E^0)+\ldots}{c_r(E^{-1})+tc_{r-1}(E^{-1})+\ldots}\,,
\eeqa
where $r$ and $s$ are the ranks of $E^{-1}$ and $E^0$ respectively.
The integrand is homogeneous of degree $s-r$ equal to the virtual dimension
\beq{vd}
\vd\ =\ 2rc_2-(r-1)c_1^2-(r^2-1)\chi(\cO_S)
\eeq
of the perfect obstruction theory \eqref{fixie} for $\cM_L$. Therefore only the $t^0$ coefficient has the correct degree $\vd$ over $\cM_L$ to have nonzero integral against $[\cM_L]^{\vir}$, and we may set $t=1$ in the above to give
$$
e_2(\cM_L)=\int_{[\cM_L]^{\vir}}\left[\frac{c_\bullet(E^0)}{c_\bullet(E^{-1})}\right]_{\vd}\,.
$$
Here $c_\bullet(\,\cdot\,)$ denotes the total Chern class. This is 
\beq{vircot}
\int_{[\cM_L]^{\vir}}c_{\vd}\big(E\udot\big)\ \in\ \Z,
\eeq
the top (in a virtual, or derived sense) Chern class of the virtual cotangent bundle $E\udot$ of $\cM_L$. This is the Ciocan-Fontanine-Kapranov/Fantechi-G\"ottsche
signed Euler characteristic of $\cM_L$ studied in \cite{JT}.

\subsection{Vanishing theorem}
Under certain positive curvature hypotheses, the above integer is the \emph{only} contribution to $\VW_{r,c_1,c_2}(S)$.

\begin{prop} \label{vaneul}
If $\deg K_S\le0$ then any stable $\C^*$-fixed Higgs pair $(E,\phi)$ has Higgs field $\phi=0$. If $r,c_1,c_2$ are chosen so that semistability implies stability, then $\VW_{r,c_1,c_2}(S)$ is the signed virtual Euler characteristic \eqref{vircot} of the moduli space $\cM_L(S)$ of stable sheaves on $S$ with fixed determinant $L$.
\end{prop}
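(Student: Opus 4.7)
The plan is to prove the two assertions in turn: the vanishing $\phi=0$ on the $\C^*$-fixed locus, and then the identification of the resulting residue integral as \eqref{vircot}. The second is essentially the computation carried out in the paragraphs just before the statement, so the new content is the vanishing.

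For the vanishing, let $(E,\phi)$ be a stable $\C^*$-fixed Higgs pair; assume for contradiction that $\phi\ne0$. The $\C^*$-action decomposes $E=\bigoplus_{i=b}^{a}E_i$ into weight spaces with $\phi_i\colon E_i\to E_{i-1}\otimes K_S$. Since the lowest-weight summand $E_b$ is annihilated by $\phi$, the subsheaf $\ker\phi\subset E$ is nonzero, proper and $\phi$-invariant. Likewise $E':=\phi(E)\otimes K_S^{-1}\subset E$ is $\phi$-invariant, because $\phi(E')=\phi^2(E)\otimes K_S^{-1}\subset\phi(E)=E'\otimes K_S$; it is proper of rank $\rk(E)-\rk(\ker\phi)<\rk(E)$. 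Slope semistability (implied by the assumed Gieseker stability) gives $\mu(\ker\phi)\le\mu(E)$ and $\mu(E')\le\mu(E)$. From the short exact sequence $0\to\ker\phi\to E\to\phi(E)\to0$ (with $E$ torsion-free so that $E/\ker\phi\cong\phi(E)$) the first yields $\mu(\phi(E))\ge\mu(E)$, and combining with $\mu(E')=\mu(\phi(E))-\deg K_S$ gives
\[\mu(E)\le\mu(\phi(E))=\mu(E')+\deg K_S\le\mu(E)+\deg K_S.\]
When $\deg K_S<0$ this is an immediate contradiction. For the boundary case $\deg K_S=0$ every inequality degenerates to equality, and the contradiction is completed by invoking the strict inequality on the next (constant-term) coefficient of the reduced Hilbert polynomial supplied by genuine Gieseker stability, applied to $\ker\phi$ or $E'$.

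With $\phi=0$ enforced on every $\C^*$-fixed point, $(\cN_L^\perp)^{\C^*}$ reduces to the moduli $\cM_L$ of stable sheaves on $S$ with $\det E\cong L$. At $\phi=0$ the bracket $[\,\cdot\,,\phi]$ appearing in the Serre-dual triangle of Corollary \ref{symme} vanishes, splitting the obstruction theory of Theorem \ref{final} into its fixed and moving parts
\[R\hom_{p\_S}(\E,\E\otimes K_S)\_0[1]\,\oplus\,R\hom_{p\_S}(\E,\E)\_0[2]\,\t^{-1}.\]
The fixed summand is exactly the perfect obstruction theory $E\udot\to\LL_{\cM_L}$ of \eqref{fixie}, while the dual of the moving summand is $N^{\vir}=E\udot\otimes\t[-1]$. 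Expanding the integrand $1/e(N^{\vir})$ as a ratio of equivariant Chern polynomials in the variable $t$ and matching its total degree against the virtual dimension \eqref{vd} of $\cM_L$ isolates the $t^0$ contribution, which is $\int_{[\cM_L]^{\vir}}c_{\vd}(E\udot)$, the Ciocan-Fontanine--Kapranov/Fantechi--G\"ottsche signed virtual Euler characteristic \eqref{vircot} of \cite{JT}. The main obstacle is the borderline regime $\deg K_S=0$, where the pure slope argument collapses to equalities and one must exploit Gieseker strictness at the next order.
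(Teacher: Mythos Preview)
Your argument is correct and follows essentially the same route as the paper's: exploit that $\ker\phi$ and $\im\phi$ are $\phi$-invariant, apply (semi)stability to each, and combine with $\deg K_S\le0$. The differences are organisational. The paper works directly with reduced Hilbert polynomials throughout, obtaining the chain $p_E<p_{\im\phi}<p_{E\otimes K_S}$ (viewing $\im\phi$ as a $\phi'$-invariant subsheaf of $E\otimes K_S$) and then invoking $p_{E\otimes K_S}\le p_E$; this handles $\deg K_S<0$ and $\deg K_S=0$ in one stroke, so your separate ``slope first, then Gieseker'' treatment of the boundary case is unnecessary. Second, to exclude $\phi$ being an isomorphism the paper simply observes that a $\C^*$-fixed Higgs field satisfies $\det\phi=0$, which is shorter than invoking the weight decomposition to exhibit a nonzero $\ker\phi$ (though your observation that the lowest weight piece is annihilated is of course valid and reappears later in the paper in Section~\ref{spt}). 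Your identification of the residue integral with \eqref{vircot} is exactly what the paper does in the paragraphs preceding the Proposition.
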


\begin{proof}
Since both $\ker\phi$ and $\im\phi$ are $\phi$-invariant subsheaves (of $E$ and $E\otimes K_S$ respectively) Gieseker stability gives the following inequalities on reduced Hilbert polynomials,
\beq{lt}
p\_E(n)\ <\ p\_{\;\im\phi}(n)\ <\ p\_{E\otimes K_S}(n) \quad \forall\,n\gg0,
\eeq
unless $\im\phi$ is 0 or all of $E\otimes K_S$. But $\deg K_S\le0$ implies that $p\_{E\otimes K_S}(n)\le p\_E(n)$ for $n\gg0$. So either $\phi=0$ or $\phi$ is an isomorphism. Since $\phi$ is $\C^*$-fixed it has determinant 0, so it cannot be an isomorphism.
\end{proof}

\medskip\noindent\textbf{Remark.}
If either $\deg K_S<0$ or $K_S\cong\cO_S$ then for any $E\in\cM_L$,
$$
\Ext^2(E,E)\_0\ \cong\ \Hom(E,E\otimes K_S)^*_0\ =\ 0.
$$
In the first case this is because $E$ is stable and $\deg E\otimes K_S<\deg E$ so the only map $E\to E\otimes K_S$ is zero. In the second case it is because stable sheaves $E$ are simple. Therefore the obstruction space vanishes at any $E\in\cM_L$, so $\cM_L$ is smooth of dimension $\vd$ \eqref{vd}. In particular, the signed virtual Euler characteristic \eqref{vircot} is just the signed \emph{topological} Euler characteristic
$$
\VW_L\ =\ \int_{\cM_L}c_{\vd}\big(\Omega_{\cM_L}\big)\ =\ (-1)^{\vd}e(\cM_L).
$$

The same proof with non-strict inequalities in \eqref{lt} gives the following.

\begin{prop} 
If $\deg K_S<0$ then any semistable $\C^*$-fixed Higgs pair $(E,\phi)$ has Higgs field $\phi=0.\hfill\square$
\end{prop}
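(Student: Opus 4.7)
The plan is to mirror the proof of Proposition \ref{vaneul} in the semistable setting, using the strict hypothesis $\deg K_S<0$ to convert the non-strict inequalities from Gieseker semistability into a contradiction.

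Suppose for contradiction that $\phi\ne 0$. Two natural $\phi$-invariant subsheaves appear: $\ker\phi\subsetneq E$, invariant under $\phi$, and $\im\phi\subset E\otimes K_S$, invariant under the twisted Higgs field $\phi\otimes\id\_{K_S}$ (since $(\phi\otimes 1)(\phi(e))=\phi^2(e)\in\im\phi\otimes K_S$). Gieseker semistability of $(E,\phi)$ applied to $\ker\phi$, together with additivity of Hilbert polynomials along the exact sequence $0\to\ker\phi\to E\to\im\phi\to 0$, yields $p\_E(n)\le p\_{\im\phi}(n)$ for $n\gg 0$. Applying the same reasoning to the twisted semistable Higgs pair $(E\otimes K_S,\phi\otimes 1)$ gives $p\_{\im\phi}(n)\le p\_{E\otimes K_S}(n)$ for $n\gg 0$.

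The hypothesis $\deg K_S<0$ now closes the gap: the coefficient of $n$ in $p\_{E\otimes K_S}-p\_E$ is $h\cdot c_1(K_S)=\deg K_S<0$, so $p\_{E\otimes K_S}(n)<p\_E(n)$ strictly for $n\gg 0$. Concatenating yields $p\_E(n)<p\_E(n)$ for large $n$, the required contradiction, so $\phi=0$.

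The main technical point is the preservation of Gieseker semistability under tensoring by $K_S$, needed for the second non-strict inequality; this can be bypassed by passing to slope semistability (implied by Gieseker semistability and manifestly preserved under tensoring by line bundles), since the decisive strict inequality lives at the level of the linear coefficient of the Hilbert polynomial---that is, at the level of slopes. The $\C^*$-fixed hypothesis plays no role here; it is retained for parallel with Proposition \ref{vaneul}, where it was needed only to rule out $\phi$ being an isomorphism in the boundary case $\deg K_S=0$.
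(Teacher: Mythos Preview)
Your proof is correct and follows exactly the route the paper indicates: replace the strict inequalities of Proposition~\ref{vaneul} by non-strict ones from semistability, then use $\deg K_S<0$ to force the strict inequality $p\_{E\otimes K_S}(n)<p\_E(n)$ and derive a contradiction. Your additional remarks --- that the $\C^*$-fixed hypothesis is superfluous here, and the discussion of Gieseker versus slope semistability under tensoring by $K_S$ --- are accurate but unnecessary (tensoring by a line bundle preserves Gieseker semistability directly, since $\phi$-invariant subsheaves of $E\otimes K_S$ are exactly twists of those of $E$ and reduced Hilbert polynomials shift uniformly).
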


%
%

\subsection{The second fixed locus}\label{spt}
Connected components of the monopole branch $\cM_2$ \eqref{cpt2}, with $\phi\ne0$, are more interesting. Since the $\C^*$-fixed stable sheaves $\cE_\phi$ are simple, we can make them $\C^*$-equivariant by  \cite[Proposition 4.4]{Ko}.\footnote{In \cite[Proposition 5.1]{TT2} we prove the same holds for general (not necessarily simple) $\C^*$-fixed Higgs pairs, which we apply in the semistable case.} Equivalently, $E$ carries a $\C^*$-action $\psi\colon\C^*\to\Aut(E)$ such that
\beq{cstar}
\psi_t\circ\phi\circ\psi_t^{-1}\ =\ t\phi.
\eeq
Therefore $E=\oplus_i E_i$ splits into weight spaces $E_i$ on which $t$ acts as $t^i$. That is, with respect to this splitting we can write $\psi_t=\mathrm{diag}(t^i)$. By \eqref{cstar}, this action acts on the Higgs field with weight $1$. Conversely, a Higgs pair $(E,\phi)$ with a $\C^*$ action on $E$ whose induced action on $\phi$ has weight $1$ clearly defines a fixed point of our original $\C^*$ action.

Since $\phi$ decreases weight, it maps the lowest weight torsion subsheaf to zero. This subsheaf is therefore $\phi$-invariant, and so zero by stability.
In particular each of the $E_i$ are torsion-free, and so in particular have rank $>0$. Thus $\phi$ acts blockwise through maps
\beq{nezt}
\phi_i\colon E_i\To E_{i-1}\otimes K_S,
\eeq
and we get flags of torsion-free sheaves on $S$ of the sort studied by Negut \cite{Ne}.
When the $E_i$ have rank 1 they are twists of ideal sheaves by line bundles (since they are torsion-free by stability), and the maps \eqref{nezt} define nesting of these ideals.
Therefore one can express these components of the $\C^*$-fixed locus $\cM_2$ in terms of nested Hilbert schemes (of curves and points) on $S$. In particular one gets the virtual cycles on nested Hilbert schemes discovered recently in \cite{GSY1, GSY2}, at least when $h^{0,1}(S)=0=h^{0,2}(S)$.

We explain an extended example with $\rk(E)=2$ in detail next, leaving the general case to \cite{GSY1}. Since the $E_i$ are torsion free and rank $>0$, they have rank 1, they are ideal sheaves tensored with line bundles, and there are only two of them:
$$
E\ =\ E_i\oplus E_j.
$$
Without loss of generality $i>j$. Since the Higgs field has weight 1, it takes weight $k$ to weight $k-1$. It is also nonzero, so we must have $j=i-1$ and the only nonzero component of $\phi$ maps $E_i$ to $E_{i-1}$.

Let $\t$ denotes the one dimensional $\C^*$ representation of weight 1. By tensoring $E$ by $\t^{-i}$ (i.e. multiplying the $\C^*$ action on $E$ by $\lambda^{-i}\cdot\id_E$) we may assume without loss of generality that $i=0$ and $j=-1$. Considering $\phi$ as a weight 0 element of $\Hom(E,E\otimes K_S)\otimes\t$, we have
\beq{rk1}
E\ =\ E_0\oplus E_{-1}\ \ \mathrm{and}\ \ \phi\ =\ \mat{0}{0}{\Phi}{0}\mathrm{\ \ for\ some\ \ }\Phi\colon E_0\To E_{-1}\otimes K_S\cdot\t.
\eeq
We now calculate invariants from these on certain general type surfaces.

\section{Calculations on surfaces with positive canonical bundle} \label{quintic}
Let $(S,\cO_S(1))$ be a smooth, connected polarised surface with
\begin{itemize}
\item $h^1(\cO_S)=0$, and
\item a smooth nonempty connected canonical divisor $C\in|K_S|$, such that
\item $L=\cO_S$ is the only line bundle satisfying $0\le\deg L\le\frac12\deg K_S$,
\end{itemize}
where degree is defined by $\deg L=c_1(L)\cdot c_1(\cO_S(1))$. Examples include
\begin{itemize}
\item the generic quintic surface in $\P3$,
\item the generic double cover of $\P2$ branched over an octic,
\item general type surfaces with $\Pic(S)=\Z\cdot K_S$, and
\item the blow up of K3 in a point. 
\end{itemize}
More generally, we expect many general type surfaces with $h^1(\cO_S)=0$ and $p_g>0$ to have enough deformations to move the Hodge structure on $H^2(S)$ so the only integral (1,1) classes are rational multiples of $c_1(S)$. Such surfaces therefore satisfy the above conditions if $c_1(S)\in H^2(S,Z)$ is primitive.

By adjunction the genus $g$ of $C$ is
$$
g=1+c_1(S)^2
$$
while the bundles $K_S$ and $K_S^2$ have sections
$$
h^0(K_S)\ =\ p_g(S)\ =\ \frac1{12}\big(c_1(S)^2+c_2(S)\big)-1
$$
and
\beq{alf}
h^0(K_S^2)\ =\ P_2(S)\ =\ p_g(S)+g\ =\ \frac1{12}\big(13c_1(S)^2+c_2(S)\big).
\eeq
By the obvious long exact sequences and Serre duality it is easy to show they have \emph{no higher cohomology}. \medskip

We consider $\C^*$-fixed rank 2 Higgs pairs $(E,\phi)$ of \emph{fixed determinant $K_S$} in the monopole branch $\cM_2$ \eqref{cpt2}. They were described in \eqref{rk1} above.
In particular, $E_{-1}\subset E$ is a $\phi$-invariant subsheaf, so by semistability
$$
\deg E_{-1}\ \le\ \deg E_0\ =\ \deg K_S-\deg E_{-1}.
$$
But the existence of the nonzero map $\Phi\colon E_0\to E_{-1}\otimes K_S$ implies
$$
\deg E_{-1}+\deg K_S\ \ge\ \deg E_0\ =\ \deg K_S-\deg E_{-1}.
$$
Together these give $0\le\deg E_{-1}\le\frac12\deg K_S$, which by our assumptions on $S$ implies that $\det E_{-1}=\cO_S$ and $\det E_0=K_S$. Therefore
$$
E_0\ =\ I_0\otimes K_S,\quad E_{-1}\ =\ I_1\!\cdot\!\t^{-1},
$$
for some ideal sheaves $I_i$.
Since $\Phi$ defines a nonzero map $I_0\to I_1$ we must have $I_0\subseteq I_1$. That is, denoting by $Z_j$ the 0-dimensional subscheme defined by the ideal $I_j$,
\beq{nest}
Z_1\ \subseteq\ Z_0.
\eeq
This can be done in families to prove the following scheme-theoretic description of the $\C^*$-fixed loci.

\begin{lem} \label{n2} Fix $r=2,\ c_1=-c_1(S),\ c_2\in\Z$. If $c_1(S)^2\ge0$, then for $c_2<0$ the $\C^*$-fixed locus is empty. For $c_2\ge0$ it is the disjoint union of $\cM_{r,c_1,c_2}$ and the nested Hilbert schemes
\beq{nested}
\cM_2\ \cong\ \bigsqcup_{i=0}^{\lfloor c_2(E)/2\rfloor}S^{[i,\,c_2-i]}
\eeq
of subschemes $Z_1\subseteq Z_0\subset S$ of lengths
$$
|Z_1|\ =\ i,\qquad |Z_0|+|Z_1|\ =\ c_2(E).\vspace{-7mm}
$$
$\hfill\square$
\end{lem}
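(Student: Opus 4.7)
The proof naturally splits into three stages.

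First, identifying the fixed loci set-theoretically. The $\phi=0$ branch is tautologically $\cM_{r,c_1,c_2}$ with $L=K_S$ (since $c_1=-c_1(S)=c_1(K_S)$, consistent with the set-up of \eqref{rk1}). For the monopole branch, the analysis culminating in \eqref{rk1} already shows that every $\C^*$-fixed stable Higgs pair with $\phi\ne0$ is, after the canonical normalisation fixing the weights of $E$ to $\{0,-1\}$, of the form $E=(\cI_{Z_0}\otimes K_S)\oplus\cI_{Z_1}\!\cdot\!\t^{-1}$ with $\Phi$ induced by the inclusion $\cI_{Z_0}\hookrightarrow\cI_{Z_1}$, for a unique pair of zero-dimensional subschemes $Z_1\subseteq Z_0\subset S$.

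Second, the numerics. Multiplicativity of total Chern classes gives $c_1(E)=c_1(K_S)=c_1$ and $c_2(E)=\ell(\cO_{Z_0})+\ell(\cO_{Z_1})$; writing $i:=\ell(\cO_{Z_1})$, the nesting $Z_1\subseteq Z_0$ forces $0\le i\le\lfloor c_2/2\rfloor$ with $\ell(\cO_{Z_0})=c_2-i$, producing the indexing in \eqref{nested}. When $c_2<0$ the monopole branch is empty since lengths are nonnegative, while $\cM_{r,c_1,c_2}$ is empty by Bogomolov's inequality $4c_2\ge c_1^2=c_1(S)^2\ge0$. This establishes the first claim.

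Third, the main body of work is to upgrade the bijection above to a scheme-theoretic isomorphism $\cM_2\cong\bigsqcup_i S^{[i,c_2-i]}$. In the forward direction I build a morphism using the tautological nested ideals $\cJ_0\supseteq\cJ_1$ on $S^{[i,c_2-i]}\times S$: set $\E:=(\cJ_0\otimes K_S)\oplus\cJ_1$ with Higgs field $\Phi$ given by the inclusion $\cJ_0\hookrightarrow\cJ_1$ on the first summand and zero on the second. Stability of the fibres is where the hypotheses on $S$ really enter: any $\Phi$-invariant saturated rank-$1$ destabiliser would have associated line bundle $L$ with $0\le\deg L\le\tfrac12\deg K_S$, so $L=\cO_S$, giving slope strictly less than that of $E$ (since $\deg K_S>0$). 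This classifies a morphism $S^{[i,c_2-i]}\to\cM_2$.

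In the reverse direction, given a flat family of $\C^*$-fixed stable Higgs pairs over a base $T$, one decomposes the universal sheaf into $\C^*$-weight eigenspaces to recover the rank-$1$ summands and hence the nested ideals. The main obstacle is globalising the equivariant structure: Kobayashi's simplicity argument gives one pointwise, unique up to an overall $\t^n$-twist fixed by demanding weights $\{0,-1\}$, but promoting this to a single $T\times\C^*$-equivariant structure on the universal sheaf requires a descent argument. The obstruction is controlled by the vanishing of $\Ext^1_{\C^*}(\E_t,\E_t)^{\mathrm{wt}\,0}$, which follows from the simplicity of the fibres. Once the equivariant structure is globalised, flatness of its weight decomposition is automatic, each summand is a flat rank-$1$ torsion-free sheaf of determinant $K_S$ respectively $\cO_S$ (hence classifies a point of $\Hilb(S)$), and the off-diagonal block of $\Phi$ supplies the nesting, yielding the inverse morphism $T\to S^{[i,c_2-i]}$.
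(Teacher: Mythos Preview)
Your overall strategy matches the paper's: the pointwise classification is exactly the analysis preceding \eqref{rk1}--\eqref{nest}, and the paper then simply asserts ``this can be done in families'' and boxes the lemma. So you are supplying detail the paper omits. Two of those details are not right, however.

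\medskip\noindent\textbf{Stability.} Your argument that a $\Phi$-invariant saturated rank-1 subsheaf $F$ has determinant $L$ with $0\le\deg L\le\tfrac12\deg K_S$ is not justified, and appears to conflate the paper's constraint on the \emph{weight piece} $E_{-1}$ with a constraint on an arbitrary $\phi$-invariant subsheaf. In fact the stability check is simpler and does not use the Picard hypothesis on $S$ at all: since $\Phi\colon I_{Z_0}\otimes K_S\hookrightarrow I_{Z_1}\otimes K_S$ is injective, any $\phi$-invariant rank-1 $F\subset E$ must have nonzero projection to the weight $-1$ summand $I_{Z_1}$ (else $\phi(F)=0$ forces $F\subseteq\ker\phi=I_{Z_1}$ anyway). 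A nonzero map $F\to I_{Z_1}$ from a rank-1 torsion-free sheaf is injective, so $F\hookrightarrow I_{Z_1}\subset\cO_S$ and $\deg F\le 0<\tfrac12\deg K_S$. The hypotheses on $S$ enter only in the \emph{classification} direction (forcing $\det E_{-1}=\cO_S$, $\det E_0=K_S$), not in the stability of the pairs you construct.

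\medskip\noindent\textbf{Globalising the equivariant structure.} Your claim that the obstruction is ``controlled by the vanishing of $\Ext^1_{\C^*}(\E_t,\E_t)^{\mathrm{wt}\,0}$, which follows from the simplicity of the fibres'' is incorrect: simplicity gives $\Hom=\C$, not any $\Ext^1$ vanishing, and the first-order deformation space $\Ext^1$ is certainly nonzero here. The cleaner route is to note that the $\C^*$-action on $X$ lifts to a $\C^*$-action on $\cN\times X$ under which the universal sheaf $\EE$ is equivariant (up to a line bundle twist from the base, absorbed by normalising the weights to $\{0,-1\}$ via the determinant). Restricting to $\cN^{\C^*}\times X$ the base action is trivial, so $\EE$ is $\C^*$-equivariant for the action on $X$ alone; its weight decomposition is then automatically flat over the fixed locus and produces the universal nested ideals. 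This is essentially what the paper's citation of \cite[Proposition 4.4]{Ko} is doing, promoted to families.
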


In general $S^{[i,j]}$ is singular, but connected.
In particular we have the cases
\begin{itemize}
\item $c_2(E)<0$. Then $\cM_2$ is empty and --- if $c_1(S)^2\ge0$ ---  $\cM_L$ is also empty by the Bogomolov inequality for stable sheaves. So $\VW_{r,\;K_S,\;c_2<0}=0$.
\item $c_2(E)=0$. Then $\cM_2$ is the single point with $E=K_S\oplus\cO\!\cdot\!\t^{-1}$ and $\phi\colon E\to E\otimes K_S\!\cdot\!\t$ is $\mat{0}{0}{1}{0}$. This Higgs pair already appears in \cite[Equation 2.70]{VW}.
\item $c_2(E)=1$. Then $\cM_2
\cong S$ with $x\in S$ corresponding to $E=I_x\otimes K_S\oplus\cO\!\cdot\!\t^{-1}$ and Higgs field $\phi=\mat{0}{0}{\iota_x}{0}$, where $\iota_x$ is the (twist by $K_S$ of) the inclusion $I_x\into\cO$.
\item $c_2(E)=2$. Now $\cM_2$ has two components,
$$
\cM_2\ \cong\ S^{[2]}\ \sqcup\ S.
$$
The first $S^{[2]}:=\Hilb^2S$ is similar to the previous example, with length-2 subschemes $Z\subset S$ replacing $\{x\}\subset S$. The second is a copy of $S$, with $x\in S$ corresponding to $E=I_x\otimes K_S\oplus I_x\!\cdot\!\t^{-1}$ and Higgs field $\phi=\mat{0}{0}{1}{0}$.
\end{itemize}

\subsection{Horizontal terms}
We will calculate first for $i=0$ in \eqref{nested}, i.e. $Z_1$ empty ($I_1=\cO$) and
$$
E=I_0\otimes K_S\ \oplus\ \cO\!\cdot\!\t^{-1}.
$$
Denoting $|Z_0|=c_2(E)$ by $n$, this is a connected component $S^{[n]}=S^{[0,n]}$ of the $\C^*$-fixed locus, as noted above.
Even this case turns out to be both hard and interesting.
It describes \emph{all} $\C^*$-fixed points when $c_2\le1$ and $c_1(S)^2\ge0$, by the Bogomolov inequality.

The deformation theory of $(E,\phi)$ with $\det E=K_S$ fixed and $\tr\phi=0$ is governed by $R\Hom\_X(\cE_\phi,\cE_\phi)\_\perp$, which is the co-cone on
\beq{cone}
\xymatrix@C=35pt{
R\Hom(E,E)\_0 \ar[r]^(.4){[\ \cdot\ ,\,\phi]} & R\Hom(E,E\otimes K_S\otimes\t)\_0\;.}
\eeq
Here we have been more explicit about the $\C^*$ action, inserting the character $\t$ so that the Higgs field $\phi$ becomes of weight 0.

The splitting of $E$ induces a splitting of $\Hom(E,E)$ which we denote, in the obvious notation, by
$$
\mat{\Hom(I_0K_S,I_0K_S)}{\Hom(\cO\!\cdot\!\t^{-1},I_0K_S)}{\Hom(I_0K_S,\cO\!\cdot\!\t^{-1})}{\Hom(\cO\!\cdot\!\t^{-1},\cO\!\cdot\!\t^{-1})}\,=\
\mat{\C\cdot\id_{I_0}}{H^0(I_0K_S)\t}{0}{\C\cdot\id_{\cO}}.
$$
Similarly, $\Hom(E,E\otimes K_S\!\cdot\!\t)$ splits as
$$
\mat{\Hom(I_0K_S,I_0K_S^2\!\cdot\!\t)}{\Hom(\cO\!\cdot\!\t^{-1},I_0K_S^2\!\cdot\!\t)\!}{\Hom(I_0K_S,K_S)}{\Hom(\cO\!\cdot\!\t^{-1},K_S)}=
\mat{\!H^0(K_S)\t}{H^0(I_0K_S^2)\t^2\!}{\C\cdot\iota}{H^0(K_S)\t}\!.
$$
Since $\phi=\mat00\iota0$, the map $[\ \cdot\ ,\phi]$ between them acts by
$$
\mat as0b\ \Mapsto\ \mat{s\iota}0{(b-a)\iota}{-\iota s}.
$$
Setting $b=-a$ gives the map on trace-free groups. We find the map $\Hom(E,E)\_0\to\Hom(E,E\otimes K_S\otimes\t)\_0$ is injective and has cokernel
\beq{coker}
\frac{H^0(K_S)}{\iota H^0(I_0K_S)}\cdot\t\ \oplus\ H^0(I_0K_S^2)\t^2.
\eeq

\medskip Next we compute $\Ext^1(E,E)=\Ext^1(E,E)\_0$ as
{\small $$
\mat{\Ext^1(I_0K_S,I_0K_S)}{\Ext^1(\cO\!\cdot\!\t^{-1},I_0K_S)}{\Ext^1(I_0K_S,\cO\!\cdot\!\t^{-1})}{\Ext^1(\cO\!\cdot\!\t^{-1},\cO\!\cdot\!\t^{-1})}\,=\
\mat{T_{Z_0}S^{[n]}}{H^1(I_0K_S)\t}{H^1(I_0K_S^2)^*\t^{-1}}0\!,
$$}
where in the bottom left entry we have used Serre duality on $S$.

Similarly $\Ext^1(E,E\otimes K_S\!\cdot\!\t)=\Ext^1(E,E\otimes K_S)\_0\cdot\t$ is
{\small $$
\mat{\Ext^1(I_0K_S,I_0K_S^2\!\cdot\!\t)}{\Ext^1(\cO\!\cdot\!\t^{-1},I_0K_S^2\!\cdot\!\t)}{\Ext^1(I_0K_S,K_S)}{\Ext^1(\cO\!\cdot\!\t^{-1},K_S)}\,=\
\mat{(T^*_{Z_0}S^{[n]})\t}{H^1(I_0K_S^2)\t^2}{H^1(I_0K_S)^*}0\!.
$$}
The map $[\ \cdot\ ,\phi]$ between them acts by
$$
\mat vsf0\ \Mapsto\ \mat{s\iota}0{-\iota v}{-\iota s}.
$$

\begin{lem} \label{zero} This map vanishes.
\end{lem}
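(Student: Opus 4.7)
The plan is to show that each of the three a priori nonzero blocks $s\iota$, $-\iota v$, $-\iota s$ in the target matrix vanishes individually. (Note that the entry $f$ of $\alpha$ contributes nothing: plugging $\alpha=\mat 0 0 f 0$ into $\alpha\phi-\phi\alpha$ gives $0$ directly.)

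The bottom-right block $-\iota s$ sits in $\Ext^1(\cO\cdot\t^{-1},\cO\cdot\t^{-1}\otimes K_S\cdot\t)\cong H^1(K_S)\cdot\t$, which vanishes by Serre duality since $h^{0,1}(S)=0$ by hypothesis.

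For the bottom-left block $-\iota v$, the assignment $v\mapsto\iota v$ is (after untwisting by $K_S^{-1}$) the post-composition map $\iota_*\colon\Ext^1(I_0,I_0)\to\Ext^1(I_0,\cO)$ induced by $\iota\colon I_0\hookrightarrow\cO$. I will show $\iota_*=0$. Applying $\Hom(I_0,-)$ to $0\to I_0\to\cO\to\cO_{Z_0}\to 0$ yields
\[
\Hom(I_0,I_0)\To\Hom(I_0,\cO)\To\Hom(I_0,\cO_{Z_0})\To\Ext^1(I_0,I_0)\xrightarrow{\ \iota_*\ }\Ext^1(I_0,\cO),
\]
whose first map is the iso $\C\cdot\id_{I_0}\to\C\cdot\iota$. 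Hence $\Hom(I_0,\cO_{Z_0})\hookrightarrow\Ext^1(I_0,I_0)$ is injective. By Fogarty's smoothness theorem, $S^{[n]}$ is smooth of dimension $2n$, so $\dim\Hom(I_0,\cO_{Z_0})=\dim T_{[Z_0]}S^{[n]}=2n$. The trace splitting $\Ext^1(I_0,I_0)=\Ext^1(I_0,I_0)_0\oplus H^1(\cO_S)$ collapses to $\Ext^1(I_0,I_0)_0$ because $h^1(\cO_S)=0$; and $\Ext^1(I_0,I_0)_0$ is the tangent space at $[I_0]$ to the moduli of rank-$1$ torsion-free sheaves with fixed determinant -- again $S^{[n]}$, of dimension $2n$. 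So the injection is an isomorphism and exactness gives $\iota_*=0$.

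For the top-left block $s\iota\in\Ext^1(I_0K_S,I_0K_S^2)\cong T^*_{[Z_0]}S^{[n]}$, I would exploit the Serre anti-self-duality of the commutator. Cyclicity of the trace together with the associativity of Yoneda composition immediately gives, for arbitrary $\alpha,\gamma\in\Ext^1(E,E)$,
\[
\langle\gamma,[\alpha,\phi]\rangle\ =\ -\langle\alpha,[\gamma,\phi]\rangle\ \in\ H^2(K_S),
\]
where $\langle-,-\rangle$ is the Serre pairing between $\Ext^1(E,E)$ and $\Ext^1(E,E\otimes K_S)$. Specialising to $\alpha=\mat 0 s 0 0$ and $\gamma=\mat w 0 0 0$ for arbitrary $w\in\Ext^1(I_0,I_0)$, the previous step gives $[\gamma,\phi]=\mat 0 0 {-\iota w} 0=0$, so the right-hand side vanishes. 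The left-hand side reduces to the block-diagonal contribution $\tr_{E_0}\!\bigl((s\iota)\circ w\bigr)\in H^2(K_S)$, which is precisely the Serre pairing between $w$ and $s\iota$ under the canonical iso $\Ext^1(I_0,I_0)\cong\Ext^1(I_0K_S^2,I_0K_S^2)$. As $w$ ranges over $T_{[Z_0]}S^{[n]}$, non-degeneracy of Serre duality forces $s\iota=0$.

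The crux of the argument is the middle step -- the dimension equality $\dim\Hom(I_0,\cO_{Z_0})=\dim\Ext^1(I_0,I_0)=2n$, which is a consequence of Fogarty's smoothness theorem. Once that is in hand, the third step is formal; the only bookkeeping is to check that no unexpected sign arises when writing the Serre pairing in terms of the $\C^*$-graded block decomposition.
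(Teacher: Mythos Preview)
Your argument is correct and follows essentially the same three-step structure as the paper. For $\iota s$ you both use $H^1(K_S)=0$; for $\iota v$ you both use exactness of $\Hom(I_0,\cO_{Z_0})\to\Ext^1(I_0,I_0)\to\Ext^1(I_0,\cO)$ together with the fact that the first arrow is an isomorphism (the paper simply asserts it is ``the identity map from $T_{Z_0}S^{[n]}$ to itself'', while you spell out the dimension count via Fogarty).

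The only genuine difference is in handling $s\iota$. The paper observes directly that the map $s\mapsto s\iota$ is precomposition $\iota^*\colon\Ext^1(\cO,I_0K_S)\to\Ext^1(I_0,I_0K_S)$, which is the Serre dual of the postcomposition $\iota_*$ already shown to vanish; hence $\iota^*=0$. You instead package the same Serre-duality relationship inside the anti-self-adjointness identity $\langle\gamma,[\alpha,\phi]\rangle=-\langle\alpha,[\gamma,\phi]\rangle$ and then specialise to block-diagonal $\gamma$. This is correct but a detour: once you notice that $s\iota$ is literally $\iota^*(s)$, the vanishing follows in one line from the Serre duality of $\iota^*$ and $\iota_*$, without invoking the commutator formalism.
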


\begin{proof}
In the exact sequence
\beq{serre}
\Hom(I_0,\cO_{Z_0})\To\Ext^1(I_0,I_0)\rt\iota\ \Ext^1(I_0,\cO)
\eeq
the first arrow is an isomorphism: it is the identity map from $T_{Z_0}S^{[n]}$ to itself. Therefore the second arrow $\iota$ is zero, and $\iota v=0$.

Similarly we can see $s\iota$ from
\begin{align*}
\Ext^1(\cO,I_0K_S)&\rt{\iota^*}\Ext^1(I_0,I_0K_S), \\
s \qquad &\,\Mapsto\qquad s\iota.
\end{align*}
But $\iota^*$ is the Serre dual of the second arrow in \eqref{serre}, which we saw was zero.

Finally $\iota s$ lies in $H^1(K_S)$, which vanishes since $h^1(\cO_S)=0$.
\end{proof}

So we are now ready to analyse the deformation-obstruction theory of $(E,\phi)$. Let $T^i$ be the cohomology groups of the cone \eqref{cone}. We have the exact sequence
{\small \begin{multline*}
\!\!\!\!0\to T^0\to\Hom(E,E)\_0\rt{[\,\cdot\,,\phi]}\Hom(E,E\otimes K_S)\_0\t\to T^1\to\Ext^1(E,E)
\rt{[\,\cdot\,,\phi]}\quad \\ \quad\Ext^1(E,E\otimes K_S)\t\to T^2\To\Ext^2(E,E)\_0\rt{[\,\cdot\,,\phi]}
\Ext^2(E,E\otimes K_S)\_0\t\to T^3\to0.
\end{multline*}}

\noindent We have shown that the third arrow is injective, so $T^0=0$. This says that $(E,\phi)$ has no trace-free infinitesimal automorphisms, as we knew already from stability. By Serre duality, the third-from-last map is surjective and $T^3=0$.
From \eqref{coker} and Lemma \ref{zero} the sequence splits into
{\small $$
0\to\frac{H^0(K_S)}{\iota H^0(I_0K_S)}\!\;\cdot\!\;\t\ \oplus\ H^0(I_0K_S^2)\t^2\To T^1\To
\mat{T_{Z_0}S^{[n]}}{H^1(I_0K_S)\t\!}{\!H^1(I_0K_S^2)^*\t^{-1}}0\to0
$$}

\noindent and its Serre dual for $T^2$:
\beq{fir} \vspace{-1mm} \eeq
{\small $$
0\to\!\mat{(T^*_{Z_0}S^{[n]})\t}{H^1(I_0K_S^2)\t^2\!}{\!H^1(I_0K_S)^*}0\!
\To T^2\To\!\left(\frac{H^0(K_S)}{\iota H^0(I_0K_S)}\right)^{\!\!*\!}\ \oplus\ H^0(I_0K_S^2)^*\t^{-1}\!\to0.
$$}

\subsection{The virtual cycle} \label{kiemli}
In particular we see the fixed (weight 0) part of $T^1$ is just $T_{Z_0}S^{[n]}$, as expected.

The weight 1 part of $T^1$ comes from putting together $H^0(K_S)\big/\iota H^0(I_0K_S)$ and $H^1(I_0K_S)$. Together we claim these give $\Gamma(K_S|_{Z_0})$. Although we don't need this --- we only require the K-theory class of $T^1$ to compute the invariant \eqref{defvw} --- for completeness we briefly describe how to understand it.\medskip

\noindent{\textbf{Digression.}} We do this by working with the $\C^*$-invariant sheaf $\cE_\phi$ on $X$ which is equivalent, via the spectral construction, to the Higgs pair $(E,\phi)$. It is 
$$
\cE_\phi\ =\ I_{Z_0\subset 2S}\otimes K_S,
$$
the pushforward from $2S$ to $X$ of the ideal sheaf of $Z_0\subset2S$, all twisted by $K_S$. (Here $2S\subset X$ is the thickening of the zero section $S\subset X$ defined by the ideal $I_{S\subset X}^2$.) Let $s_0$ denote the tautological section of $\pi^*K_S$ on $X=K_S$, vanishing on the zero section $S$, and fix any section $s$ of $K_S$. The support of $\cE_\phi$ is $2S=\{s_0^2=0\}$; a weight-1 deformation of this (parametrised by $t$) is given by
\beq{t2}
s_0^2=t^2s^2,
\eeq
splitting $2S$ into the two sections $s_0=\pm ts$.
Taking the structure sheaf of this deformation and twisting by both $K_S$ and the ideal sheaf
\beq{t3}
\pi^*I_{Z_0}+(s_0-ts)
\eeq
gives a corresponding deformation of $\cE_\phi$. However, away from $Z_0$ it is the \emph{trivial} deformation to first order. That is, working over $\Spec\C[t]/(t^2)$ we see that \eqref{t2} is just $s_0^2=0$ --- i.e. the same $2S$ as at $t=0$. The deformation \emph{does} move $Z_0$ to first order, however, if and only if $s\not\in I_{Z_0\,}$, as can be seen from \eqref{t3}. This describes the $H^0(K_S)\big/H^0(I_0K_S)$ part of the weight-1 part of the first order deformations $T^1$.

Since the deformation is zero away from $Z_0$ we do not need $s$ to be a global section of $K_S$. Any local section defined near $Z_0$ also defines the above deformation --- glued to zero away from $Z_0$. This describes \emph{all} of the weight-1 part of the first order deformations $T^1$ as $\Gamma(K_S|_{Z_0})$ --- both the deformations $H^0(K_S)\big/H^0(I_0K_S)$ coming from global sections $s$, and the quotient $H^1(I_0K_S)$ by these. \medskip

Now $\Gamma(K_S|_{Z_0})$ is the fibre over $Z_0\in S^{[n]}$ of the rank $n$ bundle $K_S^{[n]}$ on $S^{[n]}$, and this bundle is globally the weight 1 part of $T^1$. Dually, the (fixed part of the) obstruction bundle on our smooth $\C^*$-fixed moduli space $\cM_2$ is $\big(K_S^{[n]}\big)^*$.

Therefore the virtual cycle that $\cM_2$ inherits from \cite{GP} is the Euler class of this obstruction bundle:
\beqa
\big[\cM_2\big]^{\vir} &=& e\big(\big(K_S^{[n]}\big)^{*\,}\big)\cap S^{[n]}\\
&=& (-1)^ne\big(K_S^{[n]}\big)\cap S^{[n]}.
\eeqa
Fix the section of $K_S$ cutting out the smooth canonical curve $C$. It induces a section of $K_S^{[n]}$ with zero locus $C^{[n]}=\Sym^n C\subset S^{[n]}$. Since this has the correct dimension, it is Poincar\'e dual to $e\big(K_S^{[n]}\big)$, and we find that
\beq{vc}
\big[\cM_2\big]^{\vir}\ =\ (-1)^n\big[C^{[n]}\big]\ \subset\ S^{[n]}\ =\ \cM_2.
\eeq

\subsection{The virtual normal bundle} Reading off the virtual normal bundle $N^{\vir}$ from the moving parts of the above computations \eqref{fir} gives
$$
\Gamma(K_S|_{Z_0})\t\ \oplus\ 
R\Gamma(I_0K_S^2)\t^2\ \oplus\ 
R\Gamma(I_0K_S^2)^\vee\t^{-1}[-1]\ \oplus\ 
T^*_{Z_0}S^{[n]}\t[-1]
$$
at $Z_0\in\cM_2$. Notice the final term has weight 1 --- if Serre duality did not shift weights in this way, this term would have weight 0 and would be the fixed obstruction bundle. Therefore the virtual class $[\cM_2]^{\vir}$ would be a signed Euler characteristic as before. As it is, the $\cM_2$ contribution is more interesting.

We only care about its K-theory class, for which it simplifies things to express $R\Gamma(I_0K_S^2)$ as $H^0(K_S^2)-H^0(K_S^2|_{Z_0})$. As $Z_0$ moves through $S^{[n]}$, the first term is a fixed $\C^{P_2}$, where the plurigenus $P_2(S)=h^0(K_S^2)=p_g(S)+g$
is the constant \eqref{alf}. The second term is the fibre of the vector bundle $(K_S^2)^{[n]}$. That is, in $\C^*$-equivariant K-theory, $N^{\vir}$ is
$$
\big[K_S^{[n]}\big]\t\ +\ 
(\t^2)^{\oplus P_2}\ -\ 
\big[(K_S^2)^{[n]}\big]\t^2\ -\ 
(\t^{-1})^{\oplus P_2}\ +\ 
\big[\big((K_S^2)^{[n]}\big)^*\big]\t^{-1}\ -\ 
\big[T^*_{S^{[n]}}\big]\t.
$$
Therefore
\begin{eqnarray}
\frac1{e(N^{\vir})}
&=& \frac{e\big((K_S^2)^{[n]}\t^2\big)e\big((\t^{-1})^{\oplus P_2}\big)e\big(T^*_{S^{[n]}}\t\big)}
{e\big(K_S^{[n]}\t\big)e\big((\t^2)^{\oplus P_2}\big)e\big(\big((K_S^2)^{[n]}\big)^*\t^{-1}\big)} \nonumber \\
&=& \frac{(2t)^nc_{\frac1{2t}}\big((K_S^2)^{[n]}\big)\cdot(-t)^{P_2}\cdot t^{2n}c_{\frac1t}\big(T^*_{S^{[n]}}\big)}
{t^nc_{\frac 1t}\big(K_S^{[n]}\big)\cdot(2t)^{P_2}\cdot(-1)^nt^nc_{\frac1t}\big((K_S^2)^{[n]}\big)} \nonumber \\
&=& (-2)^{n-P_2\,}t^n\frac{c_{\frac1{2t}}\big((K_S^2)^{[n]}\big)c_{-\frac1t}\big(T_{S^{[n]}}\big)}
{c_{\frac 1t}\big(K_S^{[n]}\big)c_{\frac1t}\big((K_S^2)^{[n]}\big)}\,, \label{1n}
\end{eqnarray}
where $c_s(E):=1+sc_1(E)+\cdots+s^rc_r(E)$ for a bundle $E$ of rank $r$; when $s=1$ this is $c_\bullet$ (the total Chern class).
 
When we take the degree $n$ part of this and integrate over the virtual cycle $(-1)^n\big[C^{[n]}\big]$ \eqref{vc}, only the $t^0$ part contributes because \eqref{1n} has total degree $n$ (that is $-\rk(N^{\vir})=n$ equals the virtual dimension of $\cM_2$). So we can set $t=1$ to get the same answer. We can also use the fact that $C^{[n]}$ is cut out by a transverse section of $K_S^{[n]}$, so its normal bundle is the restriction of $K_S^{[n]}$. Thus $T_{S^{[n]}}|_{C^{[n]}}= T_{C^{[n]}}\oplus K_S^{[n]}|_{C^{[n]}}$ in K-theory, and
\beq{sofar}
\int_{[\cM_2]^{\vir}}\frac1{e(N^{\vir})}\ =\ (-2)^{-P_2}2^n\int_{C^{[n]}}
\frac{c_{\frac12}\big((K_S^2)^{[n]}\big)c_{-1}\big(T_{C^{[n]}}\big)c_{-1}\big(K_S^{[n]}\big)}
{c_\bullet\big(K_S^{[n]}\big)c_\bullet\big((K_S^2)^{[n]}\big)}\,.
\eeq

\subsection{Tautological classes} To evaluate this integral we express it in terms of two tautological cohomology classes on $C^{[n]}$. Fix a basepoint $c_0\in C$. The first class is
$$
\omega\ :=\ \mathrm{PD}\big[C^{[n-1]}\big]\ \in\ H^2\big(C^{[n]},\Z\big),
$$
where $C^{[n-1]}\into C^{[n]}$ is the map taking $Z\mapsto Z+c_0$.

For the second we use the Abel-Jacobi map
\beq{AJ}
\AJ\colon C^{[n]}\To\Pic^nC, \qquad Z\Mapsto\cO(Z).
\eeq
Tensoring with powers of $\cO(c_0)$ makes the $\Pic^nC$ isomorphic for all $n$, so we may pull back the theta divisor from $\Pic^{g-1}C$. Its cohomology class
$$
\theta\ \in\ H^2(\Pic^nC,\Z)\ \cong\ \Hom(\Lambda^2 H^1(C,\Z),\Z)
$$
maps $\alpha,\beta\in H^1(C,\Z)$ to $\int_C \alpha\wedge \beta$. We also use $\theta$ to denote its pullback $\AJ^*\theta$, giving our second tautological class
\beq{theta}
\theta\ \in\ H^2(C^{[n]},\Z).
\eeq

The integrals of these classes are given by
\begin{eqnarray} \nonumber
\int_{C^{[n]}}\frac{\theta^i}{i!}\,\omega^{n-i} &=& \int_{C^{[i]}}\frac{\theta^i}{i!}\ =\ 
\int_{\Pic^iC}\!\AJ_*\!\big[C^{[i]}\big]\cup\frac{\theta^i}{i!}
\ =\ \int_{\Pic^iC}\frac{\theta^{g-i}}{(g-i)!}\cdot\frac{\theta^i}{i!}\\ &=&{g\choose i}.\label{Poinc}
\end{eqnarray}
The first equality follows from repeated intersection with the divisor $C^{[n-1]}\subset C^{[n]}$; the last two are Poincar\'e's formula \cite[Section I.5]{ACGH}.

In our application we also need to know the two identities
$$
c_t(T_{C^{[n]}})\ =\ (1+\omega t)^{n+1-g}
\exp\left(\frac{-t\theta}{1+\omega t}\right)
$$
and
$$
c_t(L^{[n]})\ =\ (1-\omega t)^{n+g-1-\deg L}\exp\left(\frac{t\theta}{1-\omega t}\right)
$$
from, for example, \cite[Section VIII.2]{ACGH}.

Plugging these into \eqref{sofar} makes the integrand $(-2)^{-P_2}2^n$ times by
$$
\frac{(1-\frac\omega2)^{n+1-g}\exp\left(\frac{\theta}{2-\omega}\right)(1-\omega)^{n+1-g}
\exp\left(\frac{\theta}{1-\omega}\right)(1+\omega)^n\exp\left(\frac{-\theta}{1+\omega}\right)}
{(1-\omega)^n\exp\left(\frac{\theta}{1-\omega}\right)(1-\omega)^{n+1-g}\exp\left(\frac{\theta}{1-\omega}\right)}\,.
$$
Therefore our invariant is
\beq{sofar2}
(-2)^{-p_g(S)-1}(-1)^n\!\!\int_{C^{[n]}}\!(\omega-2)^{n+1-g}\frac{(1+\omega)^n}{(1-\omega)^n}
\,\exp\!\left(\!\frac{\theta}{2-\omega}-\frac{\theta}{1+\omega}-\frac{\theta}{1-\omega}\!\right)\!.
\eeq

To simplify this integral we notice from \eqref{Poinc} that
$$
\int_{C^{[n]}}\frac{\theta^i}{i!}\,\omega^{n-i}\ =\ 
\int_{C^{[n]}}{g\choose i}\omega^i\,\omega^{n-i},
$$
so whenever $\theta^i/i!$ is integrated against only powers of $\omega$ we may replace it by ${g\choose i}\omega^i$. In particular,
$$
\exp(\alpha\theta)\ =\ \sum_{i=0}^\infty\alpha^i\frac{\theta^i}{i!}\ \sim\ \sum_{i=0}^\infty\alpha^i{g\choose i}\omega^i\ =\ (1+\alpha\omega)^g
$$
for $\alpha$ a power series in $\omega$. The $\sim$ becomes an equality if we integrate both sides over $C^{[n]}$ against a power series in $\omega$. In \eqref{sofar2} this gives
\begin{multline*}
\frac{(-1)^{n+p_g(S)+1}}{2^{p_g(S)+1}}
\int_{C^{[n]}}(\omega-2)^{n+1-g}\frac{(1+\omega)^n}{(1-\omega)^n}
\left(1+\frac{\omega}{2-\omega}-\frac{\omega}{1+\omega}-\frac{\omega}{1-\omega}\right)^g \\
=\ (-2)^{-p_g(S)-1}(-1)^n\int_{C^{[n]}}(\omega-2)^{n+1-2g}\frac{(1+\omega)^{n-g}}{(1-\omega)^{n+g}}\big(4\omega-2\big)^g.
\end{multline*}
Therefore, in terms of generating series,
\begin{align} \nonumber
(-2)^{p_g(S)-g+1}\sum_{n=0}^\infty(-1)^nq^n&\int_{C^{[n]}}\frac1{e(N^{\vir})}\\ =&\ \sum_{n=0}^\infty q^n\int_{C^{[n]}}(\omega-2)^{n+1-2g}\frac{(1+\omega)^{n-g}}{(1-\omega)^{n+g}}\big(1-2\omega\big)^g.
\label{gens}
\end{align}

\subsection{The answer in closed form}
The series \eqref{gens} is the \emph{diagonal} \cite[Section 6.3]{St2} of the double generating series
\beq{dubl}
\sum_{i,n=0}^\infty x^it^n\int_{C^{[i]}}(\omega-2)^{n+1-2g}\frac{(1+\omega)^{n-g}}{(1-\omega)^{n+g}}\big(1-2\omega\big)^g.
\eeq
That is, if we write \eqref{dubl} as $\sum_{i,n}a_{in}x^it^n$, then our generating series \eqref{gens} is $\sum_na_{nn}q^n$.

We can evaluate \eqref{dubl} by first summing over $n$ to give
$$
\sum_{i=0}^\infty x^i\int_{C^{[i]}}\frac{(1-2\omega)^g}{(\omega-2)^{2g-1}(1-\omega^2)^g}\left(1-t\,\frac{(\omega-2)(1+\omega)}{1-\omega}\right)^{\!-1}.
$$
Since the integrand is independent of $i$, and $\int_{C^{[i]}}\omega^j=\delta_{ij}$, the operator $\sum_{i=0}^\infty x^i\int_{C^{[i]}}$ simply acts by setting $\omega$ to be $x$. So we get
$$
\frac{(1-2x)^g}{(x-2)^{2g-1}(1-x^2)^g}\ 
\frac{1-x}{1-x-t(x^2-x-2)}\,.
$$
To find the diagonal \eqref{gens} of this series we substitute $t=q/x$ and consider the integral \cite[Section 6.3]{St2}
$$
\frac1{2\pi i}\oint\frac{(1-2x)^g}{(x-2)^{2g-1}(1-x^2)^g}\ 
\frac{1-x}{1-x-\frac qx(x^2-x-2)}\ \frac{dx}x
$$
around a small loop containing only those poles \emph{which tend to the origin as $q\to0$}. Thus \eqref{gens} is the residue of
$$
\frac{(1-2x)^g}{(x-2)^{2g-1}(1-x^2)^g}\ 
\frac{-(1-x)}{(1+q)x^2-(1+q)x-2q}
$$
at the root
\beq{x0}
x\_0\,:=\ \frac12\left(1-\sqrt{1+\frac{8q}{1+q}}\,\right)
\eeq
of the quadratic $(1+q)x^2-(1+q)x-2q$ in $x$. This is
$$
\frac{(1-2x\_0)^g}{(x\_0-2)^{2g-1}(1+x_0)^g(1-x_0)^{g-1}}\ \frac{-1}{(1+q)(x\_0-x\_1)}\,,
$$
where $x\_1=\frac12\left(1+\sqrt{1+\frac{8q}{1+q}}\,\right)$ is the other root. Substituting in $x\_1-x\_0=\sqrt{1+\frac{8q}{1+q}}$ and $(1-2x\_0)^g=\big(1+\frac{8q}{1+q}\big)^{g/2}$, and multiplying out the denominator this becomes
$$
\frac{(-1)^{g-1}}{(x_0^2-x_0-2)^g(x_0^2-3x_0+2)^{g-1}}\ 
\frac{(1+9q)^{\frac{g-1}2}}{(1+q)^{\frac{g+1}2}}\,.
$$
Since $x_0$ was chosen as a root of $x_0^2-x_0-\frac{2q}{1+q}=0$, and is given by the formula \eqref{x0}, we find the terms in the denominator are
$$
\ x_0^2-x_0-2\ =\ -\frac2{1+q} \quad\mathrm{and}\quad
x_0^2-3x_0+2\ =\ \frac{1+3q+\sqrt{(1+q)(1+9q)}}{1+q}\,.\
$$
Rearranging and substituting into \eqref{gens} finally gives the following answer.

\begin{prop} \label{alge}
\beq{justj}
\sum_{n=0}^\infty q^n\int_{C^{[n]}}\frac1{e(N^{\vir})}\ \ =\ \ c\;(1-q)^{g-1}\left(1+\frac{1-3q}{\sqrt{(1-q)(1-9q)}}\right)^{\!1-g},
\eeq
where $c=(-1)^{p_g(S)+g}\cdot2^{-p_g(S)-1}$. Expanding in powers of $q$ gives
\begin{multline}
(-2)^{-p_g(S)-g}\Big(1-2(g-1)q+(g-1)(2g-11)q^2 \\
-\frac23(g-1)\big(2g^2-31g+126\big)q^3+\cdots\Big). \label{expa}
\end{multline}
\end{prop}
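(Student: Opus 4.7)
The plan is to extract the closed form by following the diagonal/residue route already signposted in the excerpt. Starting from the generating series identity \eqref{gens}, I would recognise its right hand side as the diagonal $\sum_n a_{nn}q^n$ of the double generating function \eqref{dubl}, obtained by declaring $i$ (the exponent of $\omega$) and $n$ to be independent. This decoupling is legitimate because in each summand of \eqref{gens} the integrand over $C^{[n]}$ is a polynomial in $\omega$, and only its coefficient of $\omega^n$ contributes; pulling the coefficient-extraction out as a formal $x$-variable separates the indices.

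Next I would sum \eqref{dubl} in closed form: the geometric series in $t$ collapses since $t$ appears only through a single simple pole, and then the operator $\sum_i x^i\int_{C^{[i]}}$ acts by simply substituting $\omega\mapsto x$ because $\int_{C^{[i]}}\omega^j=\delta_{ij}$. This produces a rational function $F(x,q/x)$ (after the usual diagonal substitution $t=q/x$) whose diagonal is computed by the standard contour-integral formula, i.e.\ the sum of residues at the roots of the denominator that tend to $0$ as $q\to0$. In our setting there is exactly one such root $x_0$, namely the smaller root of the quadratic $(1+q)x^2-(1+q)x-2q$ (after replacing $q$ by $-q$ to match signs in \eqref{gens}); the other root stays bounded away from $0$.

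The main obstacle is the algebraic simplification after taking the residue. I would use the quadratic $x_0^2-x_0=\tfrac{2q}{1+q}$ satisfied by $x_0$ to reduce the polynomial factors $x_0^2-x_0-2$ and $x_0^2-3x_0+2$ appearing in the denominator to clean expressions in $q$; the first becomes $-2/(1+q)$, while the second becomes $(1+3q+\sqrt{(1+q)(1+9q)})/(1+q)$. Combined with $(1-2x_0)^g=(1+8q/(1+q))^{g/2}=((1+9q)/(1+q))^{g/2}$ and $x_1-x_0=\sqrt{1+8q/(1+q)}$, these collapse to the claimed formula in $q$ (after the sign change $q\mapsto-q$ made in \eqref{gens} vs.\ \eqref{justj}).

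Finally, to obtain the expansion \eqref{expa}, I would Taylor-expand $\sqrt{(1-q)(1-9q)}=1-5q-8q^2-24q^3-\cdots$, substitute into $(1+(1-3q)/\sqrt{(1-q)(1-9q)})^{1-g}=(2-2q+O(q^2))^{1-g}$, multiply by $(1-q)^{g-1}$ and the prefactor $c$, and collect coefficients through order $q^3$. Each of these steps is mechanical once the closed form is in hand; the only delicate point is being scrupulous about the $\pm$ signs introduced by the identification of $[\cM_2]^{\vir}=(-1)^n[C^{[n]}]$ in \eqref{vc} versus the $(-1)^n$ absorbed into the $q^n$ in passing from \eqref{sofar2} to \eqref{gens}, and in the final passage to \eqref{justj}.
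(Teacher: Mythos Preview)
Your proposal is correct and follows essentially the same approach as the paper: take the diagonal of the double series \eqref{dubl} via the substitution $t=q/x$ and a residue at the unique root $x_0$ of $(1+q)x^2-(1+q)x-2q$ tending to $0$, then simplify using the quadratic relation exactly as you describe. The only quibble is that the paper makes no intermediate $q\mapsto -q$ substitution before reaching the residue or the quadratic; the sign bookkeeping you flag enters only at the very last step, when passing from the closed form of the right hand side of \eqref{gens} (which computes $\sum_n(-q)^n\int_{C^{[n]}}1/e(N^{\vir})$ up to the prefactor $(-2)^{p_g(S)-g+1}$) to the statement \eqref{justj}.
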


Even after multiplying by a shift $q^{-s}$, this is clearly  not modular \cite{BE}. \medskip



\subsection{Euler characteristics of nested Hilbert schemes}\label{bah}
If instead of the virtual theory we use Euler characteristics\footnote{It is reasonable to hope the Behrend function is 1 at all $\C^*$ fixed points; cf. \cite[Proposition 5.9]{TT2} and \cite[Corollary 5.8]{MT}.} then by G\"ottsche's famous calculation we get the generating series
$$
\left(\prod_{n=1}^\infty\frac1{1-q^n}\right)^{\!e(S)}
$$
which \emph{is} modular after multiplying by the shift $q^{-e(S)/24}$. In fact we can do better and sum over \emph{all} nested Hilbert schemes --- corresponding to all partitions, not just the horizontal terms --- and again get something close to a modular form. By \cite[Equation 1.128]{St1} the generating series of nested partitions is
$$
\sum_{\mu\le\lambda}q^{|\mu|+|\lambda|}\ =\ (1-q)\left(\prod_{\,n=1}^\infty\frac1{1-q^n}\right)^{\!2}.
$$
Therefore by similar arguments to G\"ottsche's we find the generating series of the bare Euler characteristic versions of the $\vw$ invariants is
$$
\sum_{i\le j}e\big(S^{[i,j]}\big)q^{i+j}\ =\
(1-q)^{e(S)}\left(\prod_{\,n=1}^\infty\frac1{1-q^n}\right)^{\!2e(S)}.
$$
After multiplying by the shift $q^{-s}=q^{-\frac{e(S)}{12}}$ this becomes
\beq{stan}
q^{-s}\sum_{i\le j}e\big(S^{[i,j]}\big)q^{i+j}\ =\ (1-q)^{e(S)}\cdot\eta(q)^{-2e(S)}.
\eeq
So \emph{up to the rational function} $(1-q)^{e(S)}$ this is the weight $-e(S)$ modular form (with character) $\eta(q)^{-2e(S)}$.

We do not understand the significance of the rational factor, the modular form, nor the weight (which is twice the standard Vafa-Witten prediction). Note that a similar formula has recently been obtained for nested Hilbert schemes with 3 steps \cite{Bo}, relevant to $SU(3)$ Vafa-Witten theory.

For some time we found this calculation suggestive that $\vw$ is the ``correct" Vafa-Witten invariant, but as explained in Section \ref{discuss} we no longer think this. For one thing, the formula \eqref{stan} depends only on $c_2(S)$, whereas the predictions in \cite[Equation 5.38]{VW} also involve $c_1^2(S)$.

The solution seems to be that we have to add in the non-horizontal terms --- the contributions of $S^{[i,j]}$ with nonzero $i$ --- to \eqref{justj}. Then we appear to get the modular forms predicted by Vafa and Witten, as we show in the next few Sections.

\subsection{Vertical terms}
Having dealt with the length\;$(Z_1)=0$ component of $\cM_2$, we now turn to the other extreme: the length\;$(Z_1)=$ length\;$(Z_0)$ component. In the description of Lemma \ref{n2}, the Higgs field $\Phi\colon I_0\to I_1$ must be an isomorphism, so we get the Hilbert scheme
$$
S^{[n,n]}\ \cong\ S^{[n]}
$$
of Higgs pairs
$$
E\ =\ I_Z\otimes K_S\ \oplus\ I_Z\cdot\t^{-1}, \qquad \phi\ =\ \mat{0}{0}{1}{0}\colon E\to E\otimes K_S\cdot\t,
$$
where $Z\subset S$ is a 0-dimensional subscheme of length $n=c_2(E)/2$. The torsion sheaf $\cE_\phi$ on $X$ that this corresponds to is the twist by $\pi^*K_S$ of 
\beq{etor}
\cF_Z\ :=\ (\pi^*I_Z)\otimes\cO_{2S}.
\eeq

To work out the deformation theory it is convenient to work with the description \eqref{etor}. By the exact sequence
$$
0\To\pi^*I_Z(-2S)\To\pi^*I_Z\To\cF_Z\To0
$$
we find
$$
R\Hom(\cF_Z,\cF_Z)\To R\Hom(\pi^*I_Z,\cF_Z)\To R\Hom(\pi^*I_Z,\pi_*\;\cF_Z(2S)).
$$
The section of $\cO(2S)$ cutting out $2S\subset X$ annihilates $\cF_Z$, so the second arrow is zero. By adjunction and $\pi_*\;\cF=I_Z\,\oplus\,I_Z\otimes\!K_S^{-1}\cdot\t^{-1}$ we therefore find
\begin{multline*}
R\Hom_X(\cF_Z,\cF_Z)\ \cong\ R\Hom_S(I_Z,I_Z)\oplus R\Hom_S(I_Z,I_Z\otimes K_S^{-1})\t^{-1} \\ \oplus\ 
R\Hom_S(I_Z,I_Z\otimes K_S^2)\t^2[-1]\oplus R\Hom_S(I_Z,I_Z\otimes K_S)\t[-1].
\end{multline*}
The $SU(2)$ perfect obstruction theory $R\Hom_X(\cF_Z,\cF_Z)\_\perp[1]$ (the derived dual of Theorem \ref{final}) comes from taking trace-free parts of the first and last terms, so
$\Hom\_\perp=0=\Ext^3_\perp$,
\begin{align} \nonumber
\Ext^1_X(\cF_Z,\cF_Z)\_\perp\ =\ &\Ext^1_S(I_Z,I_Z)\oplus\Ext^1_S(I_Z,I_Z\otimes K_S^{-1})\t^{-1} \\ &\oplus\ 
\Hom_S(I_Z,I_Z\otimes K_S^2)\t^2\oplus\Hom_S(I_Z,I_Z\otimes K_S)\_0\t,\label{extperp}
\end{align}
and $\Ext^2_\perp$ is its dual tensored with $\t$ (by Serre duality and $\omega_X\cong\cO_X\cdot\t^{-1}$).
The first term of \eqref{extperp} is just $T_ZS^{[n]}$, the fixed part of the deformations. The last term vanishes, so by duality so does the fixed part of the obstructions. Therefore
$$
\big[S^{[n,n]}\big]^{\vir}\ =\ \big[S^{[n]}\big]
$$
and we can read off the virtual normal bundle:
\begin{multline*}
\hspace{-3mm} N^{\vir}\ =\ \Ext^1_S(I_Z,I_Z\otimes K_S^{-1})\t^{-1}\oplus
\Hom_S(I_Z,I_Z\otimes K_S^2)\t^2 \\
\quad -\Big[\!\Ext^1_S(I_Z,I_Z\otimes K_S)\t\oplus\Ext^1_S(I_Z,I_Z\otimes K_S^2)\t^2 \oplus\ 
\Ext^2_S(I_Z,I_Z\otimes K_S^{-1})\t^{-1}\Big].
\end{multline*}
Integrating the reciprocal of its equivariant Euler class over $S^{[n]}$ is a project for future work, but in the case $n=1$ it is easy enough. We get
$$
\int_{S}\frac1{e(N^{\vir})}\ =\ \int_S\frac
{e(\Omega_S\,\t)e(T_S\otimes K_S^2\;\t^2)e(H^0(K_S^2)^*\;\t^{-1})}
{e(T_S\otimes K_S^{-1}\;\t^{-1})e(H^0(K_S^2)\;\t^2)}\,.
$$
Here we have repeatedly used the isomorphism $\Ext^1(I_p,I_p)\cong T_pS$ and its Serre dual, for any point $p\in S$. We have also computed $\Ext^1(I_p,I_p\otimes K_S^2)=T_pS\otimes K_S^2$ from the local-to-global spectral sequence $\oplus_{i+j=k\,}H^i(\ext^j)\so\Ext^k$ (using that $H^1(\hom)=H^2(\hom)=0$ in this case). It follows from Serre duality that $\Ext^1(I_p,I_p\otimes K_S^{-1})=T_pS\otimes K_S^{-1}$ (proving directly the vanishing of the differential $H^0(\ext^1)\to H^2(\hom)$ would be more troublesome in this case).

In the notation of \eqref{1n} this is
$$
\int_S\frac{t^2c_{\frac1t}(\Omega_S)\cdot(2t)^2c_{\frac1{2t}}(T_S\otimes K_S^2)\cdot(-t)^{P_2}}
{(-t)^2c_{-\frac1t}(T_S\otimes K_S^{-1})\cdot(2t)^{P_2}}\,.
$$
Since only the $t^0$ term contributes, we may set $t=1$ to give
$$
(-2)^{-P_2}\int_S\frac{(1-c_1+c_2)\cdot4\Big(1+\frac12c_1(T_S\otimes K_S^2)+\frac14c_2(T_S\otimes K_S^2)\Big)}
{1-c_1(T_S\otimes K_S^{-1})+c_2(T_S\otimes K_S^{-1})}\,,
$$
where $c_i:=c_i(S)$. Substituting in the identities 
\beqa
c_1(T_S\otimes K_S^2) &=& c_1+2(-2c_1)\ =\ -3c_1, \\
c_2(T_S\otimes K_S^2) &=& c_2+c_1(-2c_1)+(-2c_1)^2\ =\ c_2+2c_1^2, \\
c_1(T_S\otimes K_S^{-1}) &=& c_1+2(c_1)\ =\ 3c_1, \\
\mathrm{and}\quad c_2(T_S\otimes K_S^{-1}) &=& c_2+c_1(c_1)+(c_1)^2\ =\ c_2+2c_1^2
\eeqa
gives 
\begin{eqnarray} \nonumber
\int_{S}\frac1{e(N^{\vir})}&=&\! 
(-2)^{-P_2}\int_S\frac{(1-c_1+c_2)\big(4-6c_1
+c_2+2c_1^2\big)}{1-3c_1+c_2+2c_1^2} \\ \nonumber
&=&\! (-2)^{-P_2}\!\int_S\!\big(4-10c_1
+5c_2+8c_1^2\big)\big(1+3c_1-c_2-2c_1^2+9c_1^2\big) \\
&=&\! (-2)^{-\chi(K_S^2)}\big(c_2+6c_1^2\big). \label{vert2}
\end{eqnarray}

\noindent\textbf{Remark.}
Finally we note the above calculations can be generalised further to the rank $r$ case, where the ``vertical" component of the $\C^*$-fixed moduli space
$$
S^{[1,1,\cdots,1]}\ \cong\ S
$$
parametrises $\C^*$-fixed torsion sheaves
$$
(\pi^*I_p)\otimes\cO_{rS},\qquad p\in S.
$$
The corresponding Higgs pairs have rank $r$, determinant $K_S^{-r(r-1)/2}$ and $c_2=r+r(r-1)(r-2)(3r-1)/24$. A similar analysis to the above shows that when $g\ge2$ the invariant is
\begin{multline*}
\frac{(-1)^{P_2+\cdots+P_r}}{r^{P_r}}\left(\prod_{i=1}^{r-1}i^i\right)^{\!g-1}\times \\
\left[rc_1(S)^2\left(r-1-2(r-1)\sum_{i=1}^{r-1}\frac1i+2r\left(\sum_{i=1}^{r-1}\frac1i\right)^{\!2\,}\right)+c_2(S)\right].
\end{multline*}

\subsection{One mixed term}
Finally we compute the contribution from the other component with $c_2=3$, namely the nested Hilbert scheme
$$
S^{[2,1]}\ =\ \big\{Z_1\subset Z_0\subset S\colon|Z_0|=2,\,|Z_1|=1\big\}.
$$
The torsion sheaf $\cE$ corresponding to $Z_1\subset Z_0\subset S$ (with ideal sheaves $I_1\supset I_0$ respectively) is an extension
\beq{Beau}
0\To\iota_*I_1\otimes K_S^{-1}\t^{-1}\To\cE\To\iota_*I_0\To0.
\eeq
Here $\iota\colon S\into X$ is the zero section, and multiplication by the tautological section of $\pi^*K_S$ vanishing on $S\subset X$ acts on $\cE$ by the inclusion $\iota_*I_0\subset\iota_*I_1$.
From \eqref{Beau} we see that \emph{in equivariant K-theory}, the class of $R\Hom(\cE,\cE)[1]$ is the same as that of
\begin{multline*}
-R\Hom(\iota_*I_0,\iota_*I_0)-R\Hom(\iota_*I_1,\iota_*I_1)
\\ -R\Hom(\iota_*I_1\otimes K_S^{-1},\iota_*I_0)\t-
R\Hom(\iota_*I_0,\iota_*I_1\otimes K_S^{-1})\t^{-1}.
\end{multline*}
Using $R\Hom_X(\iota_*A,\iota_*B)=R\Hom_S(A,B)\,\oplus\,R\Hom_S(A,B\otimes K_S)\t[-1]$ this becomes
\begin{multline*}
-\langle I_0,I_0\rangle+\langle I_0,I_0\otimes K_S\rangle\t
-\langle I_1,I_1\rangle+\langle I_1,I_1\otimes K_S\rangle\t \\
-\langle I_1,I_0\otimes K_S\rangle\t
+\langle I_1,I_0\otimes K_S^2\rangle\t^2
-\langle I_0,I_1\otimes K_S^{-1}\rangle\t^{-1}
+\langle I_0,I_1\rangle,
\end{multline*}
where $\langle\ \cdot\ ,\ \cdot\ \rangle$ denotes the K-theory class of $R\Hom_S(\ \cdot\ ,\ \cdot\ )$.

To get the $SU(2)$ deformation-obstruction theory of Theorem \ref{final} we remove a copy of $H^*(\cO_S)$ from $\langle I_0,I_0\rangle+\langle I_1,I_1\rangle$ (via trace), and a copy of $H^*(K_S)\t$ from $\langle I_0,I_0\otimes K_S\rangle\t+\langle I_1,I_1\otimes K_S\rangle\t$.
In particular the fixed part of the obstruction theory has the same K-theory class as
\beq{fixi}
-\langle I_0,I_0\rangle-\langle I_1,I_1\rangle\_0
+\langle I_0,I_1\rangle,
\eeq
(where the suffix 0 denotes trace-free),
and the virtual normal bundle has the same K-theory class as
\begin{align}\nonumber
\langle I_0,I_0\otimes K_S\rangle\t
&+\langle I_1,I_1\otimes K_S\rangle\_0\;\t \\
&-\langle I_1,I_0\otimes K_S\rangle\t
+\langle I_1,I_0\otimes K_S^2\rangle\t^2
-\langle I_0,I_1\otimes K_S^{-1}\rangle\t^{-1}. \label{vN}
\end{align}
More generally this works over all of $S^{[2,1]}$ instead of at a single point. Let $\cE$ denote the universal sheaf on $X\times S^{[2,1]}$, let $I_i$ denote the universal ideal sheaves on $S\times S^{[2,1]}$, and let $\pi\_X,\,\pi\_S$ be the projections from these spaces to $S^{[2,1]}$. Then the (Serre dual) obstruction theory of $S^{[2,1]}$ of Theorem \ref{final},
$$
(E\udot)^\vee\ :=\ R\hom_{p\_X}(\EE,\EE)\_{\perp\,}[1]
$$
has a class in $K^{\C^*\!}(S^{[2,1]})$ which is given by the same formula when we use $\langle\ \cdot\ ,\ \cdot\ \rangle$ to denote the K-theory class of $R\hom_{\pi\_S}(\ \cdot\ ,\ \cdot\ )$.
Thus \eqref{fixi} is $(E\udot)^{\vee,\mathrm{\;fix}}$ and \eqref{vN} is $(E\udot)^{\vee,\mathrm{\;mov}}$.

We use the notation
$$
\xymatrix@C=10pt{
S^{[2,1]} \ar@{=}[r]& \Bl_{\Delta_S}(S\times S)
\ar[rr]^-p\ar[dl]^{\pi_1}\ar[dr]^{\pi_2}\ar[d]^{\Bl}&& S^{[2]} \\
S_1 & S_1\times S_2 & S_2}
$$
for $S^{[2,1]}\cong\Bl_{\Delta_S}(S\times S)$. 
Here $S^{[2]}$ parametrises $Z_0$ and $S_1=S$ parametrises $Z_1$. The support of $I_{Z_1}/I_{Z_0}$ is a single point parametrised by $S_2$. We explain and justify these claims as follows.

Consider $S_1$ (respectively $S_2$) as parametrising points $Z_1\subset S$ (respectively $Z_2\subset S$). There are corresponding universal points $\cZ_i\subset S_i\times S$. Pulling them back by $\pi_1\times\id_S,\,\pi_2\times\id_S$ we get (by a small abuse of notation) universal points $$\cZ_1,\ \cZ_2\ \subset\ \Bl_{\Delta_S}(S\times S)\times S.$$
Set
$$
\cZ_0\ =\ \cZ_1\cup\cZ_2\ \subset\ \Bl_{\Delta_S}(S\times S)\times S.
$$
Since $\cZ_1\cap\cZ_2$ is a divisor in both $\cZ_i$ (it is a copy of the exceptional divisor $E\subset\Bl_{\Delta_S}(S\times S)$) it follows that $\cZ_0$ is a \emph{flat} family of subschemes of $S$ over $\Bl_{\Delta_S}(S\times S)$. Together with the subfamily $\cZ_1\subset\cZ_0$ we get a flat family of flags of subschemes of $S$ of lengths 1 and 2 respectively, with a classifying map $\Bl_{\Delta_S}(S\times S)\to S^{[2,1]}$. To construct the inverse map, note $\cZ_0/S^{[2,1]}$ defines a classifying map $S^{[2,1]}\to S^{[2]}$, while the subscheme $\cZ_1\subset\cZ_0/S^{[2,1]}$ defines classifying map from $S^{[2,1]}$ to the total space of the universal subscheme over $S^{[2]}$, which is its double cover $\Bl_{\Delta_S}(S\times S)$.

From $\cZ_1\cap\cZ_2\cong E$ we get the exact sequence
$$
0\To\cO_{\cZ_2}(-E)\To\cO_{\cZ_0}\To\cO_{\cZ_1}\To0
$$
and so
\beq{use}
0\To I_0\To I_1\To\cO_{\cZ_2}(-E)\To0.
\eeq
Using this \eqref{fixi} becomes
\begin{multline*}
-\langle I_1,I_1\rangle+\langle\cO_{\cZ_2}(-E),I_1\rangle+\langle I_1,\cO_{\cZ_2}(-E)\rangle-\langle\cO_{\cZ_2}(-E),\cO_{\cZ_2}(-E)\rangle \\ -\langle I_1,I_1\rangle\_0
+\langle I_1,I_1\rangle-\langle\cO_{\cZ_2}(-E),I_1\rangle,
\end{multline*}
which we can write as
\begin{multline*}
-\langle I_1,I_1\rangle\_0+\langle I_1,\cO_{\cZ_2}(-E)\rangle-\langle\cO_{\cZ_2},\cO_{\cZ_2}\rangle \\
=\ T_{S_1}+\langle I_1,\cO_{\cZ_2}(-E)\rangle-\cO+T_{S_2}-K_{S_2}^{-1},
\end{multline*}
using Serre duality to compute the last term $\ext^2_{\pi\_S}(\cO_{\cZ_2},\cO_{\cZ_2})=K_{S_2}^{-1}$. Finally, writing $I_1=\cO-\cO_{\cZ_1}$ in K-theory, we obtain
\beq{inter}
T_{S_1}+T_{S_2}+\cO(-E)-\langle\cO_{\cZ_1},\cO_{\cZ_2}(-E)\rangle-\cO-K_{S_2}^{-1},
\eeq
where we have suppressed some obvious (flat) pullback maps.

To compute the fourth term we can work on $S_1\times S_2$ (and later pull back to $\Bl_{\Delta_S}(S\times S)$ by using Bondal-Orlov's basechange \cite[Lemma 1.3]{BO}). The universal subschemes $\cZ_1,\,\cZ_2\subset(S_1\times S_2)\times S$ intersect transversally in the small diagonal $\delta_S\subset S^{\times3}$. Therefore $R\hom(\cO_{\cZ_1},\cO_{\cZ_2})\cong\cO_{\delta_S}\otimes\Lambda^2N[-2]$, where $N$ is the normal bundle to $\cZ_1$. Pushing down to $S_1\times S_2$ we get $\cO_{\Delta_S}\otimes\Lambda^2N[-2]$. By a standard Koszul resolution argument the pullback of $\cO_{\Delta_S}$ to $\Bl_{\Delta_S}(S\times S)$ has $h^0=\cO_E$ and $h^{-1}=\cO_E(E)\otimes\Lambda^2N^*$, so using $\Lambda^2N\cong K_S^{-1}$ we get
\beq{get}
\langle\cO_{\cZ_1},\cO_{\cZ_2}(-E)\rangle\ =\ \cO_E\otimes K_S^{-1}(-E)-\cO_E.
\eeq
Plugging this into \eqref{inter} gives
\beq{media}
\big[(E\udot)^{\vee,\mathrm{\;fix}}\big]\ =\ T_{S^{[2,1]}}-\mathrm{Ob}_{S^{[2,1]}}\ =\ T_{S_1}+T_{S_2}-\cO_E\otimes K_S^{-1}(-E)-K_{S_2}^{-1}.
\eeq
There is an obvious exact sequence on $S^{[2,1]}\cong\Bl_{\Delta_S}(S_1\times S_2)$,
$$
0\To T_{\Bl_{\Delta_S}(S_1\times S_2)}\rt{\Bl_*}\Bl^*
T_{S_1\times S_2}\To\frac{\Bl^*N_{\Delta_S}}{\cO_E(E)}\To0.
$$
Since $N_{\Delta_S}\cong T_S$ has rank 2, its quotient by $\cO_E(E)$ is $\Lambda^2N_{\Delta_S}(-E)\cong K_S^{-1}|\_E(-E)$. Therefore $T_{S_1\times S_2}=T_{S^{[2,1]}}+K_S^{-1}|\_E(-E)$ in K-theory, so \eqref{media} simplifies to give
$$
\mathrm{Ob}_{S^{[2,1]}}\ =\ K_{S_2}^{-1},
$$
at least at the level of K-theory. And $S^{[2,1]}$ is smooth, so its virtual cycle is just the top Chern class of its obstruction bundle:
$$
\big[S^{[2,1]}\big]^{\vir}\ =\ c_1(S_2)\cap\big[S^{[2,1]}\big].
$$
In particular
\beq{integral}
\int_{[S^{[2,1]}]^{\vir\ }}\frac1{e(N^{\vir})}\ =\ \int_{S^{[2,1]}}\frac{c_1(S_2)}{e(N^{\vir})}\,.
\eeq
\medskip

Using \eqref{use} we rewrite \eqref{vN} as
\begin{align*}
N^{\vir}\ &=\ \langle I_1,I_1\otimes K_S\rangle\t
-\langle I_1,\cO_{\cZ_2}(-E)\otimes K_S\rangle\t
-\langle\cO_{\cZ_2}(-E),I_1\otimes K_S\rangle\t \\
&+\langle\cO_{\cZ_2}(-E),\cO_{\cZ_2}(-E)\otimes K_S\rangle\t
+\langle I_1,I_1\otimes K_S\rangle\_0\;\t
-\langle I_1,I_1\otimes K_S\rangle\t \\
&+\langle I_1,\cO_{\cZ_2}(-E)\otimes K_S\rangle\t
+\langle I_1,I_1\otimes K_S^2\rangle\t^2
-\langle I_1,\cO_{\cZ_2}(-E)\otimes K_S^2\rangle\t^2 \\
&-\langle I_1,I_1\otimes K_S^{-1}\rangle\t^{-1}
+\langle\cO_{\cZ_2}(-E),I_1\otimes K_S^{-1}\rangle\t^{-1}.
\end{align*}
Cancelling, using Serre duality, and substituting $[I_1]=[\cO]-[\cO_{\cZ_1}]$ gives
\begin{align*}
&\Big(\!-\cO(E)+\langle\cO_{\cZ_1},\cO_{\cZ_2}(-E)\rangle^\vee+\langle\cO_{\cZ_2},\cO_{\cZ_2}\rangle^\vee
+\langle I_1,I_1\rangle_0^\vee\Big)\t \\
&+\Big(\langle I_1,I_1\otimes K_S^2\rangle
-K_{S_2}^2(-E)+\langle\cO_{\cZ_1},\cO_{\cZ_2}(-E)\otimes K_S^2\rangle\Big)\t^2 \\
&\Big(\!-\langle I_1,I_1\otimes K_S^{-1}\rangle
+K_{S_2}^{-2}(E)
-\langle\cO_{\cZ_2}(-E),\cO_{\cZ_1}\otimes K_S^{-1}\rangle\Big)\t^{-1}.
\end{align*}
Substituting in \eqref{get} yields
\begin{align*}
&\Big(\!-\cO(E)-K_S|\_E(2E)+\cO_E(E)+\cO-\Omega_{S_2}+K_{S_2}-\Omega_{S_1}\Big)\t \\
&+\Big(\cO^{\oplus P_2}-T_{S_1}\otimes K_{S_1}^2
-K_{S_2}^2(-E)
+K_S|\_E(-E)-K_S^2|\_E\Big)\t^2 \\
&+\Big(T_{S_1}\otimes K_{S_1}^{-1}-\cO^{\oplus P_2}
+K_{S_2}^{-2}(E)
+K_S^{-1}|\_E(2E)-K_S^{-2}|\_E(E)\Big)\t^{-1}.
\end{align*}
Finally this can be rewritten
\begin{align*}
N^{\vir}\ =\ &-\Big(K_{S_i}(2E)-K_{S_i}(E)+\Omega_{S_1\times S_2}-K_{S_2}\Big)\t \\
&-\Big(T_{S_1}\otimes K_{S_1}^2
+K_{S_2}^2
-K_{S_j}(-E)+K_{S_j}(-2E)\Big)\t^2+(\t^2)^{\oplus P_2} \\
&+\Big(T_{S_1}\otimes K_{S_1}^{-1}+K_{S_2}^{-2}
+K_{S_k}^{-1}(2E)-K_{S_k}^{-1}(E)\Big)\t^{-1}-(\t^{-1})^{\oplus P_2}.
\end{align*}
for any $i,j,k\in\{1,2\}$. Now we use $e(F\otimes\t^w)=(wt)^rc_{1/wt}(F)$ for any complex $F$ of rank $r$, and note that $N^{\vir}$ has rank $-3$, so only terms in $t^0$ contribute to the integral \eqref{integral}. We may therefore set $t=1$ to give
\begin{multline*}
\int_{S^{[2,1]}}\frac{c_1(S_2)\cdot
\big(1+2[E]-c_1(S_i)\big)}
{\big(1+[E]-c_1(S_i)\big)\big(1-c_1(S_2)\big)} \\
\hspace{-15mm}\cdot\frac{\big(1-c_1(S_1)+c_2(S_1)\big)\big(1-c_1(S_2)+c_2(S_2)\big)}{\big(1-\frac12c_1(S_j)-\frac12[E]\big)} \\
\hspace{25mm}\cdot\frac{2^3\big(1-\frac32c_1(S_1)+\frac14c_2(S_1)+\frac12c_1(S_1)^2\big)\big(1-c_1(S_2)\big)}
{(-1)^3\big(1-3c_1(S_1)+c_2(S_1)+2c_1(S_1)^2\big)\big(1-2c_1(S_2)\big)} \\
\cdot\frac{\big(1-\frac12c_1(S_j)-[E]\big)\big(1-c_1(S_k)-[E]\big)(-1)^{P_2}}{\big(1-c_1(S_k)-2[E]\big)2^{P_2}}\,.
\end{multline*}
Multiplying out gives a polynomial in $[E]$. The constant $[E]^0$ term can be easily integrated on $S_1\times S_2$. The $[E]^1$ term is an integral on $E\cong\PP(T_S)\to\Delta_S$ of Chern classes pulled back from $\Delta_S$, so it is zero. The $[E]^{\ge2}$ terms can be evaluated using the Grothendieck formula $[E]\big|_E^2-c_1(S)\cdot[E]\big|\_E+c_2(S)=0$ \emph{on the projective bundle $E\cong\PP(T_S)\to\Delta_S$}. Pushing down to $\Delta_S$ using the projection formula --- and the fact that $[E]\big|_E\in H^2(E)$ pushes down to $-1\in H^0(\Delta_S)$ --- gives
$$
\int_{S^{[2,1]}}[E]^2c_1^2(S)\ =\ -c_1(S)^2\ =\ 
\int_{S^{[2,1]}}[E]^3c_1(S).
$$
After much cancellation the final result is the following.

\begin{prop}
$$
\int_{\big[S^{[2,1]}\big]^{\vir}}\ \frac1{e(N^{\vir})}\ =\ 
(-2)^{-P_2(S)}c_1(S)^2\Big(\!-12c_1(S)^2-2c_2(S)+62\Big).
$$
\end{prop}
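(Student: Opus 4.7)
The plan is to reduce the integral to an integral over $S^{[2,1]} \cong \Bl_{\Delta_S}(S_1 \times S_2)$ of an explicit characteristic class polynomial in $[E]$ and pullbacks from $S_1 \times S_2$, then to push forward using the blow-up geometry.

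First I would use the expression for $N^{\vir}$ in equivariant K-theory already derived above, together with the identity $e(F \otimes \t^w) = (wt)^r c_{1/wt}(F)$ for a complex $F$ of rank $r$ and weight-$w$ $\C^*$-twist, to write $1/e(N^{\vir})$ as an explicit product/quotient of Chern polynomials in $t$. Since $\rk(N^{\vir}) = -3$ equals minus the virtual dimension (i.e.\ $N^{\vir}$ and the integrand have the right weight for the $t^0$ coefficient to be the only contribution to \eqref{integral}), I can simply set $t = 1$ throughout, just as in the analogous horizontal and purely vertical computations. Combined with the virtual cycle identification $[S^{[2,1]}]^{\vir} = c_1(S_2) \cap [S^{[2,1]}]$, this turns the problem into computing a single integral of an explicit product of Chern classes of $T_{S_1}$, $T_{S_2}$, $K_{S_1}$, $K_{S_2}$, and powers of the exceptional class $[E]$ over $\Bl_{\Delta_S}(S_1 \times S_2)$.

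Second I would expand the integrand as a polynomial $\sum_k \alpha_k \cdot [E]^k$, where the coefficients $\alpha_k$ are pulled back from $S_1 \times S_2$. The $\alpha_0 \cdot [E]^0$ term reduces to an ordinary integral on $S_1 \times S_2$, computable by K\"unneth and the usual Chern class manipulations. The $\alpha_1 \cdot [E]^1$ term pushes forward to an integral over $\Delta_S$ of classes pulled back from $\Delta_S$, hence vanishes because the fibres of $E \cong \PP(T_S) \to \Delta_S$ are one-dimensional. For $k \ge 2$ I would invoke the Grothendieck relation $[E]|_E^2 - c_1(S)[E]|_E + c_2(S) = 0$ on the projective bundle $\PP(T_S)$ and push down to $\Delta_S$ using $[E]|_E \mapsto -1$, giving the substitutions
\[
\int [E]^2 c_1(S)^2 = -c_1(S)^2 = \int [E]^3 c_1(S),
\]
and analogous formulas for higher powers obtained by iterating the Grothendieck relation.

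Finally I would collect all contributions, combine them with the overall prefactor $(-2)^{-P_2(S)}$ coming from the $K_S^2$-factors in $N^{\vir}$ (exactly as in \eqref{vert2}), and simplify. The main obstacle is purely combinatorial: the integrand is a long product of rational Chern expressions and the $[E]$-expansion produces a proliferation of terms requiring careful bookkeeping. The massive cancellation to the clean form $c_1(S)^2(-12c_1(S)^2 - 2c_2(S) + 62)$ is the point where arithmetic errors are easiest to make, but there is no conceptual difficulty: everything is dictated by the already established K-theory formula for $N^{\vir}$ together with the blow-up intersection calculus on $\Bl_{\Delta_S}(S \times S)$.
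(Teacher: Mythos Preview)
Your proposal is correct and follows essentially the same approach as the paper: reduce to an explicit Chern-class integrand on $\Bl_{\Delta_S}(S_1\times S_2)$ via the equivariant Euler class identity and $t=1$ specialisation, expand in powers of $[E]$, treat the $[E]^0$ term on $S_1\times S_2$, note the $[E]^1$ term vanishes, and handle $[E]^{\ge2}$ via the Grothendieck relation on $\PP(T_S)$. The paper carries out exactly this computation, with the final line ``after much cancellation'' covering the same bookkeeping step you flag as the main obstacle.
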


\subsection{Comparison with Vafa-Witten prediction}

We can rewrite the previous formula in the notation of \cite{VW} as
\beq{mix21}
(-2)^{-\nu-g+1}(g-1)\big(-24\nu-10g+72\big),
\eeq
where $\nu=\chi(\cO_S)=\frac1{12}(c_1(S)^2+c_2(S))=p_g(S)+1$ and
$g=c_1(S)^2+1$ is the canonical genus.
Adding \eqref{mix21} to the horizontal \eqref{expa} and vertical \eqref{vert2} terms gives the full generating series of monopole branch  contributions up to degree 3:
\begin{multline*}
\sum_{n\in\Z}\VW^{\;\phi\ne0}_{2,K_S,n}(S)q^n\ =\ \\
(-2)^{-\nu-g+1}\Big[1-2(g-1)q+\big(2(g-1)(g-3)+12\nu\big)q^2 \\ 
-\frac43(g-1)\big(18\nu+g^2-8g+9\big)q^3\Big]+O(q^4).
\end{multline*}
\vskip-14mm
\beq{answ} \vspace{2mm}
\eeq
On the other hand, the second term on the first line of \cite[Equation 5.38]{VW} is, in their notation,
\beq{VW538}
\left(\frac14G(q^2)\right)^{\nu/2}\left(\frac{\theta_1}{\eta^2}\right)^{\!1-g},
\eeq
where by \cite[Equation 5.16]{VW},
\beqa
G(q) &=& q^{-1}\prod_{n=1}^\infty(1-q^n)^{-24}, \\
\theta_1(q) &=& \sum_{n\in\Z+\frac12}q^{n^2}\ =\ 
2q^{1/4}\sum_{j=0}^\infty q^{j(j+1)}, \\
\frac1{\eta(q)} &=& q^{-1/24}\prod_{n=1}^\infty(1-q^n)^{-1}. \eeqa
Therefore \eqref{VW538} is $q^{(1-g)/6-\nu}$ times by
$$
2^{-\nu+1-g}\prod_{n=1}^\infty(1-q^{2n})^{-12\nu}(1-q^n)^{2g-2}\left(\sum_{j=0}^\infty q^{j^2+j}\right)^{\!1-g} 
.$$
Ignoring terms of $O(q^4)$ this is
$$
2^{-\nu+1-g}(1-q^2)^{-12\nu}(1-q)^{2g-2}(1-q^2)^{2g-2}(1-q^3)^{2g-2}(1+q^2)^{1-g}
$$
which can be expanded as
\begin{multline*}
\!\!\!2^{-\nu+1-g}\!\left(\!1-(2g-2)q+\frac{(2g-2)(2g-3)}2q^2-\frac{(2g-2)(2g-3)(2g-4)}{3!}q^3\!\right) \\ \qquad\times(1+12\nu q^2)(1-(2g-2)q^2)(1-(2g-2)q^3)(1-(g-1)q^2).
\end{multline*}
Multiplying out, this is
\begin{multline*}
2^{-\nu+1-g}\bigg[1-2(g-1)q+\big(12\nu+2(g-1)(g-3)\big)q^2 \\
-\frac43(g-1)\big(18\nu+g^2-8g+9\big)q^3\bigg]+O(q^4),
\end{multline*}
which agrees perfectly\footnote{Up to the sign $(-1)^{-g-\nu+1}=(-1)^{\vd}$ \eqref{vd} of footnote \ref{fnsign}.} with \eqref{answ} up to $q^3$. We find this completely extraordinary: while Vafa and Witten do briefly consider a nonzero Higgs field in \cite[Equation 2.70]{VW}, it is on a bundle rather than a sheaf. The components we have calculated with in this section consist entirely of non-locally-free sheaves not considered at all in \cite{VW}. The physics reasoning (``cosmic strings") used to derive \cite[Equation 2.70]{VW} is still much more powerful than our tools more than 20 years on.

\begin{rmk}\normalfont The first term of \cite[Equation 5.38]{VW} seems to be the  $\det E=\cO_S$ contribution to the generating series, to which we plan to return in the future.
The other terms in their equation are what G\"ottsche and Kool \cite{GK} conjecture to be the contribution of the instanton branch $\cM_L$ \eqref{cpt1} --- i.e. the virtual signed Euler characteristic \eqref{vircot} of the moduli space of stable sheaves on $S$ with determinant $L$. Using Mochizuki's work \cite{Mo} they prove a universality result similar to those in \cite{EGL, GNY}: this signed virtual Euler characteristic of $\cM_L$ is a universal expression in 7 topological constants on any surface $S$ with $h^1(\cO_S)=0$ and $p_g(S)>0$. Since it is universal, the expression can be calculated on toric surfaces, which they do by torus localisation and computer calculation.\footnote{In fact they do even better, computing the (virtual) $\chi_y$-genus refinement of the virtual Euler characteristic.} The result indeed reproduces the other terms of \cite[Equation 5.38]{VW} for powers $q^{c_2(E)}$ of $q$ up to $c_2(E)=30$.
\end{rmk}

\begin{rmk}\normalfont In \cite{GT} it is shown how to rewrite integrals over the virtual cycle of a nested Hilbert schemes as integrals over a product of ordinary Hilbert schemes, with the insertion of a Carlsson-Okounkov operator. The latter can be re-expressed in terms of Grojnowski-Nakajima operators, so this may give an easier way to compute the integrals of this Section in higher degrees.
\end{rmk}

\bibliographystyle{halphanum}
\bibliography{References}

\end{document}